\documentclass[a4paper,12pt]{amsart}

\allowdisplaybreaks

\sloppy

\usepackage{latexsym,amsthm,amsmath,amssymb}
\usepackage{mathtools}
\usepackage{color,ulem}

\usepackage{arydshln}
\usepackage{epic,eepic,eepicemu}
\usepackage{multicol}
\usepackage{colortbl}

\usepackage{tikz}
\usepackage{pgfplots}
%\pgfplotsset{compat=newest}

%\usepackage{sidenotes,geometry}
%\geometry{top=2.5cm,bottom=2.5cm,left=2cm,right=5cm}
%\newcommand{\sn}[1]{\sidenote{\footnotesize{#1}}}

\usepackage{blkarray}
\usepackage{booktabs
}
\setlength{\topmargin}{0cm}
\setlength{\headheight}{0cm}
\setlength{\headsep}{.5cm}
\setlength{\textheight}{\paperheight}
\addtolength{\textheight}{-6.5cm}
\setlength{\textwidth}{\paperwidth}
\addtolength{\textwidth}{-5cm}
\setlength{\oddsidemargin}{-0.5cm}
\setlength{\evensidemargin}{-0.5cm}

\renewcommand{\theenumi}{\roman{enumi}}

\newtheorem{thm}{Theorem}[section]
\newtheorem{prop}[thm]{Proposition}
\newtheorem{lemma}[thm]{Lemma}
\newtheorem{cor}[thm]{Corollary}

\theoremstyle{definition}

\newtheorem{example}[thm]{Example}

\newtheorem*{assumption}{Assumptions}

\newcommand{\cP}{\mathcal{P}}
\newcommand{\cD}{\mathcal{D}}
\newcommand{\cV}{\mathcal{V}}
\newcommand{\cU}{\mathcal{U}}
\newcommand{\cO}{\mathcal{O}}
\newcommand{\cW}{\mathcal{W}}
\newcommand{\cS}{\mathcal{S}}
\newcommand{\cT}{\mathcal{T}}
\newcommand{\cF}{\mathcal{F}}
\newcommand{\cN}{\mathcal{N}}
\newcommand{\cL}{\mathcal{L}}

\renewcommand{\S}{\mathbb{S}}
\renewcommand{\P}{\mathbb{P}}
\newcommand{\B}{\mathbb{B}}
\newcommand{\R}{\mathbb{R}}
\newcommand{\N}{\mathbb{N}}
\newcommand{\C}{\mathbb{C}}
\newcommand{\F}{\mathbb{F}}
\newcommand{\Z}{\mathbb{Z}}

\renewcommand{\phi}{\varphi}
\renewcommand{\epsilon}{\varepsilon}

\newcommand{\vecbf}[1]{\mbox{\boldmath $#1$}}
\newcommand{\vt}{\vecbf{t}}
\newcommand{\vo}{\vecbf{0}}

\newcommand{\mylarge}[1]{\makebox{\large $#1$}}

\newcommand{\bx}{\bar{x}}
\newcommand{\by}{\bar{y}}
\newcommand{\bd}{\bar{d}}
\newcommand{\bv}{\bar{v}}
\newcommand{\blambda}{\bar{\lambda}}
\newcommand{\bX}{\overline{X}}
\newcommand{\bY}{\overline{Y}}
\newcommand{\bZ}{\overline{Z}}

\newcommand{\ty}{\tilde{y}}
\newcommand{\tX}{\widetilde{X}}
\newcommand{\tx}{\tilde{x}}
\newcommand{\tv}{\tilde{v}}
\newcommand{\tg}{\tilde{g}}
\newcommand{\tp}{\tilde{p}}
\newcommand{\tB}{\widetilde{B}}
\newcommand{\tU}{\widetilde{U}}
\newcommand{\tE}{\widetilde{E}}
\newcommand{\tZ}{\widetilde{Z}}
\newcommand{\tM}{\widetilde{M}}
\newcommand{\tf}{\tilde{f}}
\newcommand{\tpsi}{\tilde{\psi}}
\newcommand{\tlambda}{\tilde{\lambda}}
\renewcommand{\th}{\tilde{h}}
\renewcommand{\tt}{\tilde{t}}
\newcommand{\tell}{\tilde{\ell}}
\newcommand{\tgamma}{\tilde{\gamma}}
\newcommand{\tmu}{\tilde{\mu}}

\newcommand{\hx}{\hat{x}}
\newcommand{\hy}{\hat{y}}
\newcommand{\hz}{\hat{z}}
\newcommand{\hh}{\hat{h}}
\newcommand{\hw}{\hat{w}}

\newcommand{\tV}{\widetilde{V}}

\newcommand{\acos}{\mathop\mathrm{Arccos}}
\newcommand{\rank}{\mathop\mathrm{rank}}
\newcommand{\Span}{\mathop\mathrm{Span}}
\newcommand{\tr}{\mathop\mathrm{tr}}
\newcommand{\Gr}{\mathop\mathrm{Gr}}
\renewcommand{\Im}{\mathop\mathrm{Im}}
\newcommand{\rint}{\mathop\mathrm{rint}}
\newcommand{\intr}{\mathop\mathrm{int}}
\newcommand{\cone}{\mathop\mathrm{cone}}
\newcommand{\lt}{\mathop\mathrm{LT}}
\newcommand{\argmin}{\mathop\mathrm{argmin}}
\newcommand{\feas}{\mathop\mathrm{feas}}

\newcommand{\adj}{\mathrm{adj}}
\newcommand{\adjv}{\mathrm{adjv}}
\newcommand{\diag}{\mathrm{diag}}
\newcommand{\supp}{\mathrm{supp}}

\newcommand\coolover[2]{\mathrlap{\smash{\overbrace{\phantom{%
    \begin{matrix} #2 \end{matrix}}}^{\mbox{$#1$}}}}#2}

\newcommand\coolunder[2]{\mathrlap{\smash{\underbrace{\phantom{%
    \begin{matrix} #2 \end{matrix}}}_{\mbox{$#1$}}}}#2}

\newcommand\coolleftbrace[2]{%
#1\left\{\vphantom{\begin{matrix} #2 \end{matrix}}\right.}

\newcommand\coolrightbrace[2]{%
\left.\vphantom{\begin{matrix} #1 \end{matrix}}\right\}#2}

\title[The Characteristic Polynomial and Alternating Projections]{Expansions of the Characteristic Polynomial of a Perturbed PSD Matrix and Convergence Analysis of Alternating Projections for the PSD Cone and a Line}

\author[Y. Sekiguchi]{Yoshiyuki Sekiguchi}
\address[Y. Sekiguchi]{Graduate School of Marine Science and Technology, Etchujima 2-1-8, Koto-ku, Tokyo 135-8533, Japan}
\email[Corresponding author]{yoshi-s@kaiyodai.ac.jp}

\author[H. Ochiai]{Hiroyuki Ochiai}
\address[H. Ochiai, H. Waki]{Institute of Mathematics for Industry, Kyushu University, 744 Motooka, Nishi-ku, Fukuoka 819-0395, Japan}
\email{ochiai@imi.kyushu-u.ac.jp, waki@imi.kyushu-u.ac.jp}

\author[H. Waki]{Hayato Waki}
%\address[H. Waki]{Institute of Mathematics for Industry, Kyushu University, 744 Motooka, Nishi-ku, Fukuoka 819-0395, JAPAN}
%\email{waki@imi.kyushu-u.ac.jp}

\subjclass[2010]{Primary 90C25, 41A25; Secondary 65K10}
% 90C25 Convex Programming
% 41A25 Rate of convergence, degree of approximation
% 65Kxx Numerical Analysis, Mathematical programming, optimization and variational techniques

\keywords{characteristic polynomial, positive semidefinite cone, Newton diagram, alternating projection method, 
nontransversal intersection}

\begin{document}

\BAtablenotesfalse

\maketitle

%\tableofcontents

\begin{abstract}
We observe that the characteristic polynomial of a linearly perturbed semidefinite matrix can be used to determine the convergence rate of alternating projections
 for the positive semidefinite cone and a line.
As a consequence, we show that such alternating projections converge at $O(k^{-\frac{1}{2}})$, independently of the singularity degree.
A sufficient condition for the linear convergence is also obtained.
Our method directly analyzes the defining equation for an alternating projection sequence without using error bounds.
\end{abstract}

\section{Introduction}

Let $\S^n$ and $\S^n_+$ be the sets of $n\times n$ symmetric matrices and positive semidefinite matrices respectively, and $[n] = \{1,\ldots,n\}$.
For an affine subspace $E$ of $\S^n$, an important task is to find a point in the intersection $E\cap \S^n_+$.
To find such a point, the alternating projection method constructs a sequence $\{U_k\}$ via $U_{k+1} = P_E \circ P_{\S^n_+}(U_k)$ with $U_0\in E$,
where $P_E$ and $P_{\S^n_+}$ are projections onto $E$ and $\S^n_+$ respectively.
If $E\cap \S^n_+$ is nonempty, then
$\{U_k\}$ converges to a point in $E\cap \S^n_+$.
It is well-known that if $E$ intersects with $\S^n_+$ transversely, then $\{U_k\}$ converges linearly \cite{BB1993}. On the other hand, if $E$ intersects with $\S^n_+$ nontransversely, then $\{U_k\}$ converges sublinearly and an upper bound of the rate is given by the singularity degree of $E\cap \S^n_+$ \cite{DLW}.

In this paper, 
we consider the case where $E$ is a line and $E\cap \S^n_+$ is a singleton.
For this case, we seek new upper bounds for the convergence rate of $\{U_k\}$, and examine the tightness of the upper bound.
The key tool is 
the analytic formula in Proposition $\ref{prop:formula_eigen}$
for the parameter in $U_k$.
A more general version of this formula was first obtained in \cite{OSW}.
Suppose $E = \{A + tB:\R\}$ and $E\cap \S^n_+ = \{A\}$, where $A\in \S^n_+$ and $B\in \S^n$. Then Proposition $\ref{prop:formula_eigen}$ gives that
\[
  t_{k+1} = t_k - \frac{1}{\|B\|^2}
 \sum_{i \in n(t_k)}\frac{d}{d t}\frac{1}{2}\lambda_i^2(t_k)
\]
for the parameter $t_k$ in $U_k=A + t_k B$, 
where
$\lambda_i(t_k)$ are the eigenvalues of $A + t_k B$
and $n(t_k) = \{i\in [n]:\lambda_i(t_k)<0\}$.
This formula connects the convergence rate of the alternating projections and the leading terms of the eigenvalues.
Although the eigenvalues of a parametric matrix are hard to obtain, Proposition $\ref{prop:formula_eigen}$ is useful in our case, 
since the leading term of an eigenvalue of a one-parameter matrix is efficiently
determined by the \textit{Newton diagram} associated with its characteristic polynomial.

Thus we first derive an expansion formula for the characteristic polynomial (Proposition $\ref{prop:expansion}$).
Moreover, by considering the Newton polytope of the characteristic polynomial, 
we expand it further using sums of squares of minors
 (Theorem $\ref{thm:generic}$) and obtain sufficient conditions for the coefficients to be zero (Theorem $\ref{thm:degenerate}$).
We then show that the leading term of every eigenvalue of a one-parameter perturbation of a positive semidefinite matrix has degree less than or equal to $2$, using the Newton diagram (Theorem $\ref{thm:degree}$).

These results are applied to the convergence analysis of the alternating projections via Proposition $\ref{prop:formula_eigen}$. The following is the main theorem.
\begin{thm}
\label{thm:convergence0}
Let $\{U_k\}$ be the alternating projections for $\S^n_+$ and a line 
$E = \{A + tB:t\in \R\}$, where $A\in \S^n_+$ and $B\in \S$.
%Let $U_{k+1} = P_E\circ P_{\S^n_+}(U_k)$ with $U_0\in E$.
If $\S^n_+ \cap E = \{A\}$, then $\|U_{k} - A\| = O(k^{-\frac{1}{2}})$. 
\end{thm}
Theorem $\ref{thm:convergence0}$ ensures that $O(k^{-\frac{1}{2}})$ is an upper bound for the convergence rate
of the alternating projection method, independent of the singularity degree.
We will prove Theorem $\ref{thm:convergence0}$ as a corollary of Theorem $\ref{thm:convergence}$.
Moreover, Theorem $\ref{thm:convergence}$ implies that if a submatrix of the perturbing matrix $B$ satisfies a rank condition, then
the alternating projections converge linearly.
We also show that the upper bound $O(k^{-\frac{1}{2}})$ is tight if 
the singularity degree is $2$ (Proposition $\ref{prop:sd2}$).

The paper is organized as follows. Section $\ref{section:prelim}$ provides the basic notation and some examples. The expansion formula for the characteristic polynomial for a general matrix is given in Section $\ref{section:expansion}$. 
Section $\ref{section:pminor}$ presents a formula for the sum of the principal minors.
The formula for the characteristic polynomial 
is further expanded
using the Newton polytope in Section $\ref{section:newton_polytope}$. Section $\ref{section:newton_diagram}$ estimates the leading degrees of the eigenvalues via Newton diagram associated with the characteristic polynomial.
Section $\ref{section:ap}$ deals with the convergence analysis of alternating projections.

\section{Preliminaries}
\label{section:prelim}
\subsection{Basic notation}
For $n\times n$ symmetric matrices $U$ and $V$, $\langle U, V\rangle = \tr(UV)$ and $\|U\| = \sqrt{\langle U, U \rangle}$.
Define the projection $P_H(U)$ of $U$ onto a subset $H$ of $\S^n$ by $P_H(U) = \argmin_{X\in H}\|X - U\|$.
We also consider a general $n\times m$ matrix $A$.
For $\gamma\in [n]$, let $|\gamma|$ and $\#\gamma$ be the sum of the elements and the number of elements of $\gamma$, respectively.
For $\alpha\subset[m],\beta\subset [n]$,
we denote by $A[\alpha,\beta]$ the submatrix whose entries are those in
the rows of $A$ indexed by $\alpha$ and the columns of $A$ indexed by $\beta$.
In the case $\alpha = \beta$, we simply write $A[\alpha,\alpha]$ as $A[\alpha]$.
For $N = \min\{m,n\}$, we denote by $\sum_{N\times N}|A|^2$ the sum of the squares of $N\times N$ minors of $A$. 
If $A$ is a square matrix, we denote by $\sum_{d\times d}|A|$ the sum of $d\times d$ principal minors of $A$. We define $\sum_{0\times 0}|A| = \sum_{0\times 0}|A|^2 = 1$.

In the following, 
we define a compound matrix, a higher order adjugate matrix and other related matrices.
See \cite[Section $0.8.1,\ 0.8.12$]{HJ} for basic properties and examples below.%\\

%\noindent\textbf{Compound matrices.}
\subsection{Compound matrices.}
For $\alpha \subset [m]$, we denote by $\langle \alpha \rangle_k$ the partially ordered set which consists of all the subsets of $\alpha$ with $k$ elements and are ordered lexicographically. 
We define $\langle m \rangle_k = \langle [m]\rangle_k$.
For $A\in \R^{m\times n}$ and $k\leq \min\{m,n\}$, the \textit{$k$-th compound matrix} $C_k(A)$ is the 
$\binom{m}{k}\times \binom{n}{k}$
matrix whose $(\alpha,\beta)$ entry is %$\det A[\alpha,\beta]$ for $\alpha\in \langle m\rangle_k$ and $\beta\in \langle n\rangle_k$; i.e.,
\[
 C_k(A)_{\alpha,\beta} = \det A[\alpha,\beta]\ \text{for all }\alpha 
\in \langle m \rangle_k,\ \beta \in \langle n \rangle_k.
\]
We define $C_0(A)=1$ and $C_k(A) = 0$ for $k < 0$.
For $d\leq k\leq \min\{m,n\}$, we denote by $C_k^d(A)$ the $\binom{m}{k}\times \binom{n}{k}$ matrix 
whose $(\alpha,\beta)$ entry is 
\[
C_k^d(A)_{\alpha,\beta} = \sum_{d\times d}|A[\alpha,\beta]|\ \text{ for }\alpha\in \langle m\rangle_k,\ \beta\in \langle n\rangle_k. 
\]
For $d < 0$, we define $C_k^d(A)$ as the zero matrix of size $\binom{m}{k}\times \binom{n}{k}$. 
In addition, for $m=n$, we define $C_k^0(A)$ as the identity matrix of size $\binom{n}{k}$.
%\\

%\noindent\textbf{$k$-th adjugate matrices.}
\subsection{$k$-th adjugate matrices.}
For $m=n$ and $0<k<n$, the \textit{$k$-th adjugate matrix} $\adj_k(A)$ is the $\binom{n}{k}\times \binom{n}{k}$ matrix whose 
$(\alpha,\beta)$ entry is 
\[
 \adj_k(A)_{\alpha,\beta} = (-1)^{|\alpha| + |\beta|}
\det A[\beta^c,\alpha^c], \text{for all }\alpha,\beta 
\in \langle n \rangle_k.
\]
where $\alpha^c = [n]\setminus \alpha$.
We see that $\adj_0(A) = \det A$ and $\adj_1(A)$ is the standard adjugate matrix $\adj A$.
We define $\adj_n(A) = 1$ and $\adj_k(A) = 0$ for $k<0$.
For $m < n$, \textit{the adjugate vector} $\adjv(A)$ is the 
$\binom{n}{n-m}$ row vector whose entry is
\[
 \adjv(A)_{\alpha} = (-1)^{|\alpha|}\det A[[m], \alpha^c], \text{ for all }\alpha \in \langle n \rangle_{n-m}.
\]
For $m > n$, $\adjv(A)$ is similarly defined as the corresponding column vector.

\begin{example}
\label{ex:compound}
Let
$A = 
\left(
\begin{smallmatrix}
a_{11} & a_{12} & a_{13}\\ 
a_{21} & a_{22} & a_{23}\\
a_{31} & a_{32} & a_{33}
\end{smallmatrix}
\right)
$.
Then
$
C_1(A) = A,\ 
C_0(A) = 1, \ C_3(A) = \det B$, 
{\scriptsize
\[
C_2(A)=
\begin{blockarray}{cccc} 
 [1,2] & [1,3] & [2,3] \\
\begin{block}{(ccc)c}
  \begin{vmatrix} a_{11} & a_{12}\\ a_{21} & a_{22} \end{vmatrix} & 
 \begin{vmatrix} a_{11} & a_{13}\\ a_{21} & a_{23} \end{vmatrix} &
 \begin{vmatrix} a_{12} & a_{13}\\ a_{22} & a_{23} \end{vmatrix} & [1,2] \\[1.1em]
 \begin{vmatrix} a_{11} & a_{12}\\ a_{31} & a_{32} \end{vmatrix} &
 \begin{vmatrix} a_{11} & a_{13}\\ a_{31} & a_{33} \end{vmatrix} &
 \begin{vmatrix} a_{12} & a_{13}\\ a_{32} & a_{33} \end{vmatrix} & [1,3] \\[1.1em]
 \begin{vmatrix} a_{21} & a_{22}\\ a_{31} & a_{32} \end{vmatrix} &
 \begin{vmatrix} a_{21} & a_{23}\\ a_{31} & a_{33} \end{vmatrix} &
 \begin{vmatrix} a_{22} & a_{23}\\ a_{32} & a_{33} \end{vmatrix} & [2,3] \\
\end{block} 
\end{blockarray},\quad
C_2^1(A) = 
\begin{blockarray}{cccc}
 [1,2] & [1,3] & [2,3] \\
\begin{block}{(ccc)c}
a_{11}+a_{22} & a_{11} + a_{23} & a_{12} + a_{23} & [1,2]\\
a_{11} + a_{32} & a_{11} + a_{33} & a_{12} + a_{33} & [1,3]\\
a_{21} + a_{32} & a_{21} + a_{33} & a_{22} + a_{33} & [2,3]\\
\end{block}
\end{blockarray}
\]
}
$C_2^2(A) = C_2(A)$. Note that $C_2^0(A)$ is the identity matrix of size $3$, $C_3^0(A) = 1$, $C_3^1(A) = \tr(A)$, $C_3^2(A) = \sum_{2\times 2}|A|$.
%C_2^0(A) = 
%\left(
%\begin{smallmatrix}
%1 & 0 & 0 \\
%0 & 1 & 0 \\
%0 & 0 & 1 
%\end{smallmatrix}
%\right)
%
%\[
%C_2(B)=
%\begin{array}{c} 
% \begin{array}{lcc}
% [1,2] & [1,3] & [2,3] \\
% \end{array}\\
%\left(
%\begin{array}{ccc}
%  \begin{vmatrix} a&b \\ b&d \end{vmatrix} &
% \begin{vmatrix} a&c \\ b&e \end{vmatrix} &
% \begin{vmatrix} b&c \\ d&e \end{vmatrix} \\ \\
% \begin{vmatrix} a&b \\ c&e \end{vmatrix} &
% \begin{vmatrix} a&c \\ c&f \end{vmatrix} &
% \begin{vmatrix} b&c \\ e&f \end{vmatrix} \\ \\
% \begin{vmatrix} b&d \\ c&e \end{vmatrix} &
% \begin{vmatrix} b&e \\ c&f \end{vmatrix} &
% \begin{vmatrix} d&e \\ e&f \end{vmatrix} 
%\end{array} 
%\right)
%\end{array}
%\]
%
%\[
%C_2(B)=
%\begin{pmatrix}
%[1,2] & [1,3] & [2,3] \\
%\begin{vmatrix} a&b \\ b&d \end{vmatrix} &
%\begin{vmatrix} a&c \\ b&e \end{vmatrix} &
%\begin{vmatrix} b&c \\ d&e \end{vmatrix} \\ \\
%\begin{vmatrix} a&b \\ c&e \end{vmatrix} &
%\begin{vmatrix} a&c \\ c&f \end{vmatrix} &
%\begin{vmatrix} b&c \\ e&f \end{vmatrix} \\ \\
%\begin{vmatrix} b&d \\ c&e \end{vmatrix} &
%\begin{vmatrix} b&e \\ c&f \end{vmatrix} &
%\begin{vmatrix} d&e \\ e&f \end{vmatrix} 
%\end{pmatrix}
%\]
\end{example}
\begin{example}
 Let $A = 
\left(
\begin{smallmatrix}
a_{11} & a_{12} & a_{13}\\ 
a_{21} & a_{22} & a_{23}\\
a_{31} & a_{32} & a_{33}
\end{smallmatrix}
\right)$.
Then $\adj_0(A) = \det A,\ \adj_3(A) = 1$,
{\scriptsize
\[
\adj_1(A) = 
 \begin{blockarray}{cccc}
  [1] & [2] & [3] \\
\begin{block}{(ccc)c}
  \begin{vmatrix} a_{22} & a_{23}\\ a_{32} & a_{33} \end{vmatrix} &
 -\begin{vmatrix} a_{12} & a_{13}\\ a_{32} & a_{33} \end{vmatrix} & 
  \begin{vmatrix} a_{12} & a_{13}\\ a_{22} & a_{23} \end{vmatrix} & [1] \\[1em]
 -\begin{vmatrix} a_{21} & a_{23}\\ a_{31} & a_{33} \end{vmatrix} & 
  \begin{vmatrix} a_{11} & a_{13}\\ a_{31} & a_{33} \end{vmatrix} & 
 -\begin{vmatrix} a_{11} & a_{13}\\ a_{21} & a_{23} \end{vmatrix} & [2] \\[1em]
  \begin{vmatrix} a_{21} & a_{22}\\ a_{31} & a_{32} \end{vmatrix} & 
 -\begin{vmatrix} a_{11} & a_{12}\\ a_{31} & a_{32} \end{vmatrix} & 
  \begin{vmatrix} a_{11} & a_{12}\\ a_{21} & a_{22} \end{vmatrix} & [3]\\
\end{block}
 \end{blockarray},\
\adj_2(A) = 
\begin{blockarray}{cccc}
[1,2] & [1,3] & [2,3] \\
 \begin{block}{(ccc)c}
a_{33} & -a_{23} & a_{13} & [1,2] \\
-a_{32} & a_{22} & -a_{12} & [1,3]\\
a_{31} & -a_{21} & a_{11} & [2,3]\\
 \end{block}
\end{blockarray}
\] 
}
\end{example}
\begin{example}
 Let $A = 
\left(
\begin{smallmatrix}
a_{11} & a_{12} & a_{13}\\ 
a_{21} & a_{22} & a_{23}
\end{smallmatrix}
\right)$. \\
Then 
{\small $ \adjv(A) = 
\left(
\begin{vmatrix} a_{12} & a_{13}\\ a_{22} & a_{23}  \end{vmatrix},\ 
 -\begin{vmatrix} a_{11} & a_{13}\\ a_{21} & a_{23}  \end{vmatrix},\  
\begin{vmatrix} a_{11} & a_{12}\\ a_{21} & a_{22} \end{vmatrix}
\right)
$}.
\end{example}

\section{Characteristic polynomial}
\label{section:charpoly}

\subsection{Expansion of the characteristic polynomial of a perturbed diagonal matrix}
\label{section:expansion}

We have the following expansion of the determinant for the general matrices $A,B\in \R^{n\times n}$.
\begin{lemma}
[\cite{HJ}]
%[Section $0.8.12$ of \cite{HJ}]
\label{lemma:basic}
%For $A,B\in \R^{n\times n}$, 
$
   \det(A + tB) = \sum_{i=0}^n \langle \adj_i(A), C_i(B)\rangle t^i.
$
\end{lemma}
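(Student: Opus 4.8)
The plan is to derive the identity from the multilinearity of the determinant in the columns, followed by a Laplace expansion, and then to recognize the resulting double sum over minors as the trace pairing $\langle \adj_i(A),C_i(B)\rangle$.

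First I would write $a_1,\dots,a_n$ and $b_1,\dots,b_n$ for the columns of $A$ and $B$ and expand the $n$-linear alternating form:
\[
\det(A+tB)=\det(a_1+tb_1,\dots,a_n+tb_n)=\sum_{S\subseteq[n]}t^{\#S}\det M_S ,
\]
where $M_S$ denotes the matrix whose $j$-th column equals $b_j$ for $j\in S$ and $a_j$ for $j\notin S$. Collecting equal powers of $t$, the coefficient of $t^i$ is $\sum_{S\in\langle n \rangle_i}\det M_S$, so it remains to show this equals $\langle\adj_i(A),C_i(B)\rangle$.

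Second, for a fixed $S\in\langle n \rangle_i$ I would apply the Laplace expansion of $\det M_S$ along the columns indexed by $S$,
\[
\det M_S=\sum_{\gamma\in\langle n \rangle_i}(-1)^{|\gamma|+|S|}\,\det M_S[\gamma,S]\,\det M_S[\gamma^c,S^c],
\]
and observe that, since the columns of $M_S$ in positions $S$ are those of $B$ while the rest are those of $A$, we have $M_S[\gamma,S]=B[\gamma,S]$ and $M_S[\gamma^c,S^c]=A[\gamma^c,S^c]$. Summing over $S\in\langle n \rangle_i$ and renaming $S=\delta$,
\[
\sum_{S\in\langle n \rangle_i}\det M_S=\sum_{\gamma,\delta\in\langle n \rangle_i}(-1)^{|\gamma|+|\delta|}\,\det A[\gamma^c,\delta^c]\,\det B[\gamma,\delta].
\]

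Finally I would match this with the definitions. Since $\adj_i(A)_{\alpha,\beta}=(-1)^{|\alpha|+|\beta|}\det A[\beta^c,\alpha^c]$ and $C_i(B)_{\alpha,\beta}=\det B[\alpha,\beta]$, the trace pairing is
\[
\langle\adj_i(A),C_i(B)\rangle=\tr\!\big(\adj_i(A)C_i(B)\big)=\sum_{\alpha,\beta\in\langle n \rangle_i}(-1)^{|\alpha|+|\beta|}\det A[\beta^c,\alpha^c]\,\det B[\beta,\alpha],
\]
which coincides with the previous display after the substitution $\gamma=\beta$, $\delta=\alpha$. The boundary cases are consistent with the conventions of Section~\ref{section:prelim}: $i=0$ pairs $\adj_0(A)=\det A$ with $C_0(B)=1$, and $i=n$ pairs $\adj_n(A)=1$ with $C_n(B)=\det B$. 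The whole argument is elementary; the only point needing care is the bookkeeping --- the Laplace sign $(-1)^{|\gamma|+|S|}$ and the fact that the definition of $\adj_i$ already interchanges its row and column index sets, so that it is $\tr(\adj_i(A)C_i(B))$ (not $\sum_{\alpha,\beta}\adj_i(A)_{\alpha,\beta}C_i(B)_{\alpha,\beta}$) that reproduces the correct minors. When $A$ and $B$ are symmetric, as in the applications, all matrices here are symmetric and this distinction is immaterial.
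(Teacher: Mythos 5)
Your proof is correct. The paper does not prove this lemma at all --- it is quoted from Horn and Johnson \cite{HJ} --- so you are supplying an argument where the authors supply only a citation. Your route (multilinearity of the determinant in the columns, then the generalized Laplace expansion along the column set $S$, then matching the double sum of minors against the entries of $\adj_i(A)$ and $C_i(B)$) is the standard textbook derivation, and every step checks out: the coefficient of $t^i$ is indeed $\sum_{S\in\langle n\rangle_i}\det M_S$, the Laplace sign $(-1)^{|\gamma|+|S|}$ is right, and the identification $M_S[\gamma,S]=B[\gamma,S]$, $M_S[\gamma^c,S^c]=A[\gamma^c,S^c]$ is immediate. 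Your closing remark about the pairing is also a genuinely useful observation rather than pedantry: for non-symmetric $A,B$ it is $\tr(\adj_i(A)C_i(B))$, not the entrywise sum $\sum_{\alpha,\beta}\adj_i(A)_{\alpha,\beta}C_i(B)_{\alpha,\beta}$, that gives the coefficient of $t^i$ (already for $n=2$, $i=1$ the entrywise sum produces $-a_{12}b_{12}-a_{21}b_{21}$ instead of the correct $-a_{12}b_{21}-a_{21}b_{12}$); the display the paper states immediately after the lemma is therefore valid only under the symmetry hypothesis in force throughout the paper, exactly as you note.
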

By the definitions of $\adj_i(A)$ and $C_i(B)$, we see that
\[
  \langle \adj_i(A), C_i(B)\rangle
= \sum_{\alpha,\beta\in \langle n\rangle_i} (-1)^{|\alpha| + |\beta|}\det A[\beta^c,\alpha^c] \det B[\alpha,\beta].
\]
Here, we use the convention $\det A[\emptyset,\emptyset] = 1$.
In particular, if $A$ is diagonal, then $\adj_j(A)$ is also diagonal and we can write
\[
    \langle \adj_i(A), C_i(B)\rangle = \sum_{\alpha \in \langle n\rangle_i} \det A[\alpha^c] \det B[\alpha]
 = \sum_{\alpha \in \langle n\rangle_{n-i}} \det A[\alpha] \det B[\alpha^c].
\]
A direct application of Lemma $\ref{lemma:basic}$ to a diagonal matrix $A$ gives the following proposition, which is needed in later sections.
Recall that we defined $C_{i+j}^j(B)$ as the identity matrix of size $\binom{n}{i+j}$ for $j = 0$, 
and $C_{i+j}^j(B) = 0$ for $j< 0$.
\begin{prop}
\label{prop:expansion}
For $m,n\in \N$ with $m<n$, and $a_k>0\ (k\in [n-m])$, let $A = \diag(a_1,\ldots, a_{n-m},0,\ldots,0)$, $B\in \S^n$. Then the characteristic polynomial $p_{A + tB}(x)$ of $A + tB$ is written by
\[
   p_{A + tB}(x) %check
 = 
  \sum_{i=0}^n \sum_{j=m-i}^{n-i}
(-1)^{n-i}\langle \adj_{i+j}(A), C_{i+j}^j(B)\rangle\, t^jx^i.
\]
\end{prop}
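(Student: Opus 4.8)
The plan is to identify the coefficient of $t^jx^i$ on the two sides with one and the same double sum over disjoint index sets, using Lemma \ref{lemma:basic} (applied at two levels) together with the definitions of $\adj_k$ and $C_k^d$.

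First I would peel off the variable $x$. The standard characteristic-polynomial expansion --- which is the special case of Lemma \ref{lemma:basic} obtained by taking $xI$ as the first matrix and $-(A+tB)$ as the perturbation, using $\adj_i(xI)=x^{n-i}I$ --- reads
\[
p_{A+tB}(x)=\det\big(xI-(A+tB)\big)=\sum_{i=0}^n(-1)^{n-i}\Big(\sum_{(n-i)\times(n-i)}|A+tB|\Big)x^i .
\]
For each $\gamma\in\langle n\rangle_{n-i}$, the principal minor $\det\big((A+tB)[\gamma]\big)=\det\big(A[\gamma]+tB[\gamma]\big)$ is expanded by Lemma \ref{lemma:basic} again, now applied to the $(n-i)\times(n-i)$ matrices $A[\gamma]$ and $B[\gamma]$; because $A[\gamma]$ is diagonal, its coefficient of $t^j$ is $\sum_{\delta\in\langle\gamma\rangle_j}\det A[\gamma\setminus\delta]\,\det B[\delta]$. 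Summing over $\gamma$ and reindexing by the disjoint pair $(\gamma\setminus\delta,\,\delta)$, the coefficient of $t^jx^i$ in $p_{A+tB}(x)$ becomes
\[
(-1)^{n-i}\sum_{\substack{\gamma,\delta\subseteq[n],\ \gamma\cap\delta=\emptyset\\ \#\gamma=n-i-j,\ \#\delta=j}}\det A[\gamma]\,\det B[\delta].
\]

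Second, I would compute the right-hand side of the proposition directly from the definitions. Since $A$ is diagonal, $\adj_{i+j}(A)$ is diagonal with $(\epsilon,\epsilon)$-entry $\det A[\epsilon^c]$ for $\epsilon\in\langle n\rangle_{i+j}$, while $C_{i+j}^j(B)_{\epsilon\epsilon}$ is, by definition, the sum of the $j\times j$ principal minors of $B[\epsilon]$, i.e.\ $\sum\{\det B[\delta]:\delta\subseteq\epsilon,\ \#\delta=j\}$. Therefore
\[
\big\langle\adj_{i+j}(A),\,C_{i+j}^j(B)\big\rangle=\sum_{\epsilon\in\langle n\rangle_{i+j}}\det A[\epsilon^c]\sum_{\substack{\delta\subseteq\epsilon\\ \#\delta=j}}\det B[\delta]=\sum_{\substack{\gamma,\delta\subseteq[n],\ \gamma\cap\delta=\emptyset\\ \#\gamma=n-i-j,\ \#\delta=j}}\det A[\gamma]\,\det B[\delta],
\]
the last equality being the substitution $\gamma=\epsilon^c$. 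This is exactly the double sum found in the first step, so the two expansions agree coefficient by coefficient and the formula follows.

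Finally I would clear up the bookkeeping at the ends of the summation range: for $j<0$ the summand is absent by the convention $C_{i+j}^j(B)=0$; for $j=0$ one has $C_i^0(B)=I$, so $\langle\adj_i(A),I\rangle=\tr\adj_i(A)=\sum_{(n-i)\times(n-i)}|A|$, the correct $t^0$-coefficient; and $i+j=n$ is covered by $\adj_n(A)=1$ and $C_n^j(B)=\sum_{j\times j}|B|$. I would also note that when $i<m$ every summand with $j<m-i$ vanishes on both sides --- on the left because $A$ has rank $n-m$, so a principal minor of size $n-i-j>n-m$ is zero, and on the right because then $\det A[\gamma]=0$ for every admissible $\gamma$ --- so writing the range as $m-i\le j\le n-i$ discards nothing. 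The only genuine difficulty is notational: keeping straight the two complementations (passing to $\epsilon^c$ for the adjugate and $(\gamma,\delta)\mapsto(\gamma\cup\delta,\delta)$ for the principal-minor expansion) and using the various zero- and identity-conventions for $C_k^d$ and $\adj_k$ consistently; once the indexing is organized there is no analytic content.
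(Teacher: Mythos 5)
Your proposal is correct and follows essentially the same route as the paper's proof: a double application of Lemma \ref{lemma:basic} (first in $x$, then in $t$ on each principal minor), followed by reindexing the resulting sum over disjoint pairs $(\gamma,\delta)$ and matching it with $\langle \adj_{i+j}(A), C_{i+j}^j(B)\rangle$ via the complementation $\gamma=\epsilon^c$, with the range restriction $j\ge m-i$ justified by $\rank A=n-m$ exactly as in the paper. No gaps.
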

To understand the proof better, let us first consider an example for $n=4,\ m = 2$.
%%%%%%%%%%%%%%%%%%%%%%%%%%%%%%%%%%%%%%%%%%%%%%%%%%%%%%%%%%%%
\begin{example} %check
\label{ex:expansion}
For $a_1,a_2>0$, let
{\scriptsize
 \[
 A = \begin{pmatrix} a_1 & 0 & 0 & 0\\ 0 & a_2 & 0 & 0\\ 0 & 0 & 0 & 0\\ 0 & 0 
& 0 & 0 \end{pmatrix},\quad
B = \begin{pmatrix} 
b_{11} & b_{12} & b_{13} & b_{14}\\ 
b_{12} & b_{22} & b_{23} & b_{24}\\ 
b_{13} & b_{23} & b_{33} & b_{34}\\ 
b_{14} & b_{24} & b_{34} & b_{44}
 \end{pmatrix} 
\]}
Then Proposition $\ref{prop:expansion}$ gives that 
{\scriptsize 
\begin{align*} %check
& p_{A + tB}(x)  \\
& = \langle \adj_{4}(A), C_{4}^0(B)\rangle
x^4 
- \left(
\langle \adj_{3}(A), C_{3}^0(B)\rangle 
+ \langle \adj_{4}(A), C_{4}^1(B)\rangle t
\right) x^3 \\
& \hspace{1em} + \left(
\langle \adj_{2}(A), C_{2}^0(B)\rangle 
+ \langle \adj_{3}(A), C_{3}^1(B)\rangle t
+ \langle \adj_{4}(A), C_{4}^2(B)\rangle t^2
\right) x^2\\
& \hspace{1em}- \left(
\langle \adj_{1}(A), C_{1}^0(B) \rangle 
+ \langle \adj_{2}(A), C_{2}^1(B) \rangle t
+ \langle \adj_{3}(A), C_{3}^2(B) \rangle t^2
+ \langle \adj_4(A), C_4^3(B)\rangle t^3
\right) x \\
& \hspace{1em}+ \left(
\langle \adj_{0}(A), C_{0}^0(B)\rangle
+ \langle \adj_{1}(A), C_{1}^1(B)\rangle t
+ \langle \adj_{2}(A), C_{2}^2(B)\rangle t^2
+ \langle \adj_{3}(A), C_{3}^3(B)\rangle t^3
+ \langle \adj_{4}(A), C_{4}^4(B)\rangle t^4
\right)
\\
%%%%%%%%%%%%%%%%%%%%%%%%%%%%%%%%%%%%%%%%%%%%%%%%%%%%%%%%%%%%
& = x^4-\left(
a_1 + a_2 
+ \left(b_{11} + b_{22} + b_{33} + b_{44}\right)t
\right)x^3 \\
%%%%%%%%%%%%%%%%%%%%%%%%%%%%%%%%%%%%%%%%%%%%%%%%%%%%%%%%%%%%
& \hspace{1em} + \left(
a_1 a_2
+ 
\left(
a_1(b_{22} + b_{33} + b_{44}) + a_2(b_{11} + b_{33} + b_{44})
\right)t
+ 
\sum_{2\times 2}
\begin{vmatrix}
 B
\end{vmatrix}
t^2 
\right)x^2
\\
%%%%%%%%%%%%%%%%%%%%%%%%%%%%%%%%%%%%%%%%%%%%%%%%%%%%%%%%%%%%
& \hspace{1em}-\left(
a_1 a_2(b_{33} + b_{44})t
+ \left( 
a_2\sum_{2\times 2}
\begin{vmatrix}
b_{11} & b_{13} & b_{14}\\ 
b_{13} & b_{33} & b_{34}\\ 
b_{14} & b_{34} & b_{44}
 \end{vmatrix}
+ 
a_1\sum_{2\times 2}
\begin{vmatrix}
b_{22} & b_{23} & b_{24}\\ 
b_{23} & b_{33} & b_{34}\\ 
b_{24} & b_{34} & b_{44} 
   \end{vmatrix}
\right)t^2
+
\sum_{3\times 3}
\begin{vmatrix} 
B
\end{vmatrix}t^3
\right)x\\
%%%%%%%%%%%%%%%%%%%%%%%%%%%%%%%%%%%%%%%%%%%%%%%%%%%%%%%%%%%%
& \hspace{1em}
+
a_1 a_2 \begin{vmatrix}
  b_{33} & b_{34} \\
  b_{34} & b_{44}
 \end{vmatrix}t^2
+ 
 \left(
a_2 \begin{vmatrix}
b_{11} & b_{13} & b_{14}\\ 
b_{13} & b_{33} & b_{34}\\ 
b_{14} & b_{34} & b_{44}
 \end{vmatrix}
+
 a_1 \begin{vmatrix}
b_{22} & b_{23} & b_{24}\\ 
b_{23} & b_{33} & b_{34}\\ 
b_{24} & b_{34} & b_{44} 
   \end{vmatrix}
\right)t^3
+ 
\begin{vmatrix} 
 B
\end{vmatrix}t^4.
\end{align*}}
\end{example}
%%%%%%%%%%%%%%%%%%%%%%%%%%%%%%%%%%%%%%%%%%%%%%%%%%%%%%%%%%%%
\begin{proof}[Proof of Proposition $\ref{prop:expansion}$]
%Diagonality of $A$ and the definition of $C_{i+j}^j(B)$ imply that>
%\[
% \langle \adj_{i+j}(A), C_{i+j}^j(B)\rangle = 
%\sum_{\alpha\in \langle n\rangle_{i+j}} \det A[\alpha^c] 
%\sum_{j\times \j}\begin{vmatrix}
%  B[\alpha]
%\end{vmatrix}.
%\]
Let $I\in \R^{n\times n}$ be the identity matrix. By repeatedly applying Lemma $\ref{lemma:basic}$, we obtain
\begin{align*}
  p_{A + tB}(x) & = \det(x I - A - tB) 
 = \sum_{i=0}^n \langle \adj_{n-i}(x I), C_{n-i}(-A - tB)\rangle\\
& = \sum_{i=0}^n (-1)^{n-i}\tr C_{n-i}(A + tB) x^i
 =  \sum_{i=0}^n (-1)^{n-i} \sum_{\alpha \in \langle n \rangle_{n-i}}\det((A + tB)[\alpha]) x^i\\
& = \sum_{i=0}^n (-1)^{n-i}\sum_{\alpha\in \langle n\rangle_{n-i}}\sum_{j=0}^{n-i}
\langle \adj_j(A[\alpha]),C_j(B[\alpha])\rangle t^j x^i.
\end{align*}
Since $A = \diag(a_1,\ldots,a_{n-m},0,\ldots,0)$, we see that
$\adj_j(A[\alpha]) = O$ for $\#\alpha \geq n - m + 1$ and $j = 0,1,\ldots, \#\alpha - n + m - 1$. Thus if $\#\alpha = n-i$ for some $i = 0,\ldots,m-1$, then $\adj_j(A[\alpha]) = O$ for $j = 0,\ldots,m-i-1$.
We recall that $\adj_j(A) = 0$ and $C_j(A) = 0$ for $j<0$.
Then we have
\begin{align*}
p_{A + tB}(x)   
& = \left(\sum_{i=0}^{m-1} + \sum_{i=m}^n \right)(-1)^{n-i}\sum_{\alpha\in \langle n\rangle_{n-i}}\sum_{j=0}^{n-i}\langle \adj_j(A[\alpha]),C_j(B[\alpha])\rangle t^j x^i\\
& = \sum_{i=0}^{m-1} (-1)^{n-i}\sum_{\alpha \in \langle n \rangle_{n-i}}\sum_{j=m-i}^{n-i}
\langle \adj_j(A[\alpha]),C_j(B[\alpha])\rangle t^j x^i\\
& \hspace{3em}+ \sum_{i=m}^n (-1)^{n-i}\sum_{\alpha \in \langle n \rangle_{n-i}}\sum_{j=0}^{n-i}
\langle \adj_j(A[\alpha]),C_j(B[\alpha])\rangle t^j x^i\\
& = \sum_{i=0}^n (-1)^{n-i}\sum_{\alpha \in \langle n \rangle_{n-i}}\sum_{j=m-i}^{n-i}
\langle \adj_j(A[\alpha]),C_j(B[\alpha])\rangle t^j x^i\\
& = \sum_{i=0}^n (-1)^{n-i}\sum_{j=m-i}^{n-i}\sum_{\alpha \in \langle n\rangle_{n-i}}
\langle \adj_j(A[\alpha]),C_j(B[\alpha])\rangle t^j x^i.
\end{align*}
Let $i = 0,\ldots,n,\ j = m-i,\ldots,n-i$ be fixed. 
To obtain the result, we will show 
\begin{equation}
  \sum_{\alpha \in \langle n\rangle_{n-i}}
\langle \adj_j(A[\alpha]),C_j(B[\alpha])\rangle
= \langle \adj_{i+j}(A), C_{i+j}^j(B)\rangle. \label{eq:adjcomp}
\end{equation}
Since $\adj_j(A[\alpha])$ is diagonal, the definition implies
\[
   \langle \adj_j(A[\alpha]),C_j(B[\alpha])\rangle 
 = \sum_{\beta \in \langle \alpha\rangle_j}
\det(A[\alpha\setminus\beta])\det(B[\beta]).
\]
Since $A = \diag(a_1,\ldots,a_{n-m},0,\ldots,0)$, for $\beta\subset \alpha$,
we have $\det(A[\alpha\setminus\beta])\neq 0$ if and only if
$\alpha\cap \{n - m+1,\ldots,n\} = \beta\cap \{n - m+1,\ldots,n\}$.
Thus by putting
\[
  D = \{(\alpha,\beta)\subset [n]\times [n]: \beta \subset \alpha,\ 
\#\alpha = n-i,\ \#\beta = j,\ \alpha\setminus [n-m] = \beta \setminus [n-m]\},
\]
we can write
\[
  \sum_{\alpha \in \langle n\rangle_{n-i}}
\langle \adj_j(A[\alpha]),C_j(B[\alpha])\rangle
= \sum_{(\alpha,\beta)\in D}\det(A[\alpha\setminus\beta])\det(B[\beta]).
\]
Let 
\[
  D' = \{(\gamma,\beta)\subset [n]\times [n]: \gamma \subset [n-m],\ \beta \subset [n]\setminus\gamma,\ 
\#\gamma = n-i-j,\ \#\beta = j\}.
\]
Then $(\alpha,\beta)\in D$ if and only if $(\alpha\setminus \beta, \beta)\in D'$.
Thus we have
\begin{align*}
& \sum_{(\alpha,\beta)\in D}\det(A[\alpha\setminus\beta])\det(B[\beta])
 = \sum_{(\gamma,\beta)\in D'}\det(A[\gamma])\det(B[\beta])\\
& = \sum_{\overset{\gamma \subset [n-m]}{\#\gamma = n-i-j}}
\sum_{\overset{\beta\subset [n]\setminus \gamma}{\#\beta = j}}\det(A[\gamma])\det(B[\beta])
 = \sum_{\gamma \in \langle n-m\rangle_{n-i-j}}
\det(A[\gamma])\sum_{j\times j}
\begin{vmatrix}
 B[\gamma^c]
\end{vmatrix}.
\end{align*}
Now the diagonality of $\adj_{i+j}(A)$ and the definition of $C_{i+j}^j(B)$ give that
\[
\langle \adj_{i+j}(A),C_{i+j}^j(B)\rangle
= 
\sum_{\alpha \in \langle n\rangle_{i+j}}
\det(A[\alpha^c])\sum_{j\times j}
\begin{vmatrix}
 B[\alpha]
\end{vmatrix}.
\]
Since $\det(A[\alpha^c]) = 0$ if $\alpha\cap \{n-m+1,\ldots,n\} \neq \emptyset$, we obtain
\[
\sum_{\alpha \in \langle n\rangle_{i+j}}
\det(A[\alpha^c])\sum_{j\times j}
\begin{vmatrix}
 B[\alpha]
\end{vmatrix}  = 
\sum_{\gamma \in \langle n-m\rangle_{n-i-j}}
\det(A[\gamma])\sum_{j\times j}
\begin{vmatrix}
 B[\gamma^c]
\end{vmatrix}.
\]
Therefore, we have shown the equation $(\ref{eq:adjcomp})$.
This completes the proof.
\end{proof}

\subsection{The sum of the principal minors}
\label{section:pminor}
We further expand an entry of $C_{i+j}^j(B)$
in the expansion in Proposition $\ref{prop:expansion}$, which is
the sum of principal minors.
For $m,\ell\in \N$,
consider the matrix
{\small %
\begin{equation}
  M = 
\begin{array}[b]{cl}
\phantom{(}
\begin{array}{cc}
 \mathmakebox[1em]{\ell} & \mathmakebox[3em]{m}
\end{array}
\phantom{)}
& \\
\left(
     \begin{array}{@{}c@{\,}|ccc}      
     M_{11} & & M_{12} & \\
      \hline \\
     M_{21} & & M_{22} \\
                &
     \end{array} 
\right) 
& \hspace{-1em}
     \begin{array}{l}
      \ell \\
      \\
      m\\
      \phantom{1}
     \end{array} 
\end{array}.\label{eq:partition}
\end{equation}} %
We assume $M_{22} = CDF$, where $r\in \N\cup \{0\}$ with $r + \ell \leq m$, $C\in \R^{m\times r}$, $D\in \R^{r\times r}$, $F\in \R^{r\times m}$.
Here, we note $\rank (CDF)\leq r$.
We put
{\small %
\begin{equation}
  X 
= 
\begin{array}[b]{cl}
\phantom{(}
\begin{array}{ccc}
 \mathmakebox[1em]{\ell} & \mathmakebox[1em]{\ell} & \mathmakebox[1em]{r}
\end{array} 
\phantom{)}
& \\
\left(
     \begin{array}{@{\,}c|@{\,}c@{\,}|c@{\,}}      
     I & O & O\\
      \hline
    & & \\[0.4em]
     O & M_{21} & C \\[0.4em]
    & & 
     \end{array} 
\right) 
& \hspace{-1em}
     \begin{array}{l}
      \ell \\
     \\[0.4em]
      m \\[0.4em]      
      \phantom{1}
     \end{array}
\end{array},\
Y = 
\begin{array}[b]{cl}
\phantom{(}
\begin{array}{ccc}
 \hspace{.5em} \mathmakebox[1em]{\ell} & \mathmakebox[1em]{\ell} & \mathmakebox[1em]{r}
\end{array} 
\phantom{)}
&  \\
\left(
     \begin{array}{@{\,}c@{\,}|c|c@{\,}}      
     M_{11} & I & O\\
      \hline
     I & O & O\\
      \hline
     O & O & D
     \end{array} 
\right) 
& \hspace{-1em}
     \begin{array}{l}
      \ell \\
%      \left.\vphantom{\ell}\right\}\ell \\
      \ell \\  
      r
     \end{array}
\end{array},\
Z = 
\begin{array}[b]{cl}
\phantom{(}
\begin{array}{cc}
  \mathmakebox[1em]{\ell} \hspace{1em} & \mathmakebox[3em]{m} 
%\ell & m
\end{array} 
\phantom{)}
& \\
\left(
     \begin{array}{c|ccc}      
     I & & O & \\
      \hline
     O & & M_{12} & \\
      \hline
     O & & F & 
     \end{array} 
\right) 
& \hspace{-1em}
     \begin{array}{l}
      \ell \\
%      \left.\vphantom{\ell}\right}\ell \\
      \ell \\  
      r
     \end{array}
\end{array}.\label{eq:xyz}
\end{equation}} %
In the case $r = 0$, there are no blocks corresponding to $C,D$ or $F$.
%
%We note that $X$ is a rectangular matrix whose number of rows is greater than or equal to that of columns. On the other hand, the number of columns of $Z$ is greater than or equal to that of rows.
%
Then we obtain $M = XYZ$.
With this relation, we have the following factorization for the $(m-r-\ell)$-th adjugate matrix of $M$.
%
%In the following lemma, we note that $\adjv
%\begin{pmatrix}
% M_{21} & C
%\end{pmatrix}$ is a column vector and 
%$\adjv
%\begin{pmatrix}
% M_{12}\\
% F
%\end{pmatrix}$ is a row vector.
%
\begin{lemma}
\label{lemma:adj}
In case $r>0$, we have
\[
  \adj_{m - r - \ell}(M) =
\begin{pmatrix}
O & O \\
O & \tM 
\end{pmatrix} 
\in \R^{\binom{m+\ell}{r+2\ell}\times \binom{m+\ell}{r+2\ell}},
\]
where
 \[
  \tM
= (-1)^\ell 
\det D\cdot
\adjv
\left(
\begin{array}{@{}c@{\,}c@{}}
 \\
 M_{21} & C\\
 \phantom{1}
\end{array}
\right)
\adjv
\left(
\begin{array}{ccc}
 &  M_{12} & \\
 &  F &
\end{array}
\right) \in \R^{\binom{m}{r+\ell}\times \binom{m}{r+\ell}}.
 \]
In case $r = 0$, the same expression for $\adj_{m - r - \ell}(M)$
holds with\\
$\tM = (-1)^\ell \adjv
\begin{pmatrix}
 M_{21}
\end{pmatrix}
\adjv
\begin{pmatrix}
 M_{12}
\end{pmatrix}$.
\end{lemma}
\begin{proof} %check
Suppose $r>0$. Recall that $X\in \R^{(m + \ell)\times (r+2\ell)}$, 
$Z\in \R^{(r+2\ell)\times (m + \ell)}$ and
$r + \ell \leq m$. Then $r + 2\ell \leq m + \ell$.
 For $\alpha,\beta\in \langle m+\ell\rangle_{m-r-\ell}$, 
we see that
\[
  M\left[\beta^c,\alpha^c\right]
= X[\beta^c,[r + 2\ell]]\, Y\, Z[[r+2\ell],\alpha^c].
\]
Note that $\#(\alpha^c) = \#(\beta^c) = r + 2\ell$.
Since $\det Y = (-1)^{\ell} \det D$, we obtain
\[
  \det M\left[\beta^c,\alpha^c\right]
= (-1)^\ell\det D\cdot\det X[\beta^c,[r+2\ell]]\cdot\det Z[[r+2\ell],\alpha^c].
\]
If $\alpha\cap [\ell] = \emptyset$, then 
\[
  \det Z[[r+2\ell],\alpha^c] 
= \det\left(
\left(
\begin{array}{ccc}
 & M_{12} & \\
 &  F &
\end{array}
 \right) 
[[r+\ell],\alpha^c\setminus[\ell]]\right).
\]
If $\alpha\cap [\ell] \neq \emptyset$, then
$\det Z[[r+2\ell], \alpha^c]=0$. 
Similar relations hold for $X[\beta^c,[r + 2\ell]]$.
Therefore, if $\alpha\cap [\ell] = \emptyset$ and $\beta\cap [\ell] = \emptyset$, then we obtain
\begin{align*}
&   \det M\left[\beta^c,\alpha^c\right]\\
& = (-1)^\ell\det D
 \cdot\det\left(
\left(
\begin{array}{@{}c@{\,}c@{}}
 \\
 M_{21} & C \\
 \phantom{1}
\end{array}
\right)
[\beta^c\setminus[\ell],[r+\ell]]\right)
\cdot
\det\left(
\left(
\begin{array}{ccc}
  & M_{12} & \\
 & F & 
\end{array}
\right)
[[r+\ell],\alpha^c\setminus[\ell]]\right). 
\end{align*}
Since $(-1)^{|\alpha| + |\beta|}\det M\left[\beta^c,\alpha^c\right]$ is $(\alpha,\beta)$-entry of $\adj_{m-r-\ell}(M)$, we have the equality.

In case $r = 0$, we have
$
 M = 
\left(\begin{smallmatrix}
 I & O \\
 O & M_{21}
\end{smallmatrix}\right)
\left(\begin{smallmatrix}
 M_{11} & I \\
 I & O
\end{smallmatrix}\right)
\left(\begin{smallmatrix}
 I & O \\
 O & M_{12}
\end{smallmatrix}\right)
$ and similar arguments for this relation give the result.
\end{proof}
We consider a symmetric matrix $M$ satisfying the following assumptions;
\begin{assumption}[$*$]
\hspace{1em}
 \begin{enumerate}
 \item $M \in \S^{\ell + m}$ is a matrix partitioned as $(\ref{eq:partition})$ for some $\ell, m \in \N$ with $M_{11}\in \S^{\ell}$ and $M_{12} = M_{21}^T$;
 \item $M_{21} = 
 \begin{pmatrix}
 b_1 & \cdots & b_{\ell}
 \end{pmatrix}\in \R^{m\times \ell}$ for some $b_k\in \R^{m}\ (k\in [\ell])$;
 \item 
 %$M_{22} = \sum_{k=1}^r u_k c_kc_k^T$ 
 % for some  $\ell,m,r\in \N$ with $m+\ell = n$, $\ell+r\leq m$, $r<m$
 %and $u_k \in \R\setminus\{0\}$ and $c_k\in \R^m\setminus\{0\}$.
 $M_{22} = CDC^T$ for some
 $r\in \N\cup \{0\}$, $C =
 \begin{pmatrix}
 c_1 & \cdots & c_r
 \end{pmatrix}
 \in \R^{m\times r}$, $D = \diag(u_1,\ldots,u_r)$,
 $u_k\in \R\ (k\in [r])$;

 \item $r + \ell \leq m$.
 \end{enumerate}
\end{assumption}
We see $CDC^T = \sum_{k=1}^r u_k c_kc_k^T$. Note that (i), (ii), (iii) give notation and (iv) is the required property.

\begin{prop}
\label{prop:sum_pminors}
Let $M\in \S^{\ell + m}$ satisfy the assumptions $(*)$.
In case $r>0$, we have
\[
 \sum_{(r+2\ell)\times (r+2\ell)} |M|
=
(-1)^\ell
u_1\cdots u_r
\sum_{(r+\ell)\times(r+\ell)}
\begin{vmatrix}
 b_1 & \cdots & b_\ell & c_1 & \cdots & c_r
\end{vmatrix}^2.
\]
In case $r = 0$, we have
$ \sum_{2\ell\times 2\ell} |M|
=
(-1)^\ell
\sum_{\ell\times\ell}
\begin{vmatrix}
 b_1 & \cdots & b_\ell 
\end{vmatrix}^2$.
\end{prop}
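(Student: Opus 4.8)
The plan is to deduce Proposition \ref{prop:sum_pminors} from Lemma \ref{lemma:adj} by taking the trace of the $(m-r-\ell)$-th adjugate matrix. Recall that for any square matrix $N$, $\tr \adj_d(N) = \sum_{d\times d}|N^{(n-d)\text{-}\mathrm{complement}}|$; more precisely, the diagonal $(\alpha,\alpha)$ entry of $\adj_{m-r-\ell}(M)$ is $(-1)^{2|\alpha|}\det M[\alpha^c,\alpha^c] = \det M[\alpha^c]$, and summing over $\alpha \in \langle m+\ell\rangle_{m-r-\ell}$ gives $\tr \adj_{m-r-\ell}(M) = \sum_{(r+2\ell)\times(r+2\ell)}|M|$, since $\#\alpha^c = r+2\ell$. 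So the left-hand side of the Proposition is exactly $\tr\adj_{m-r-\ell}(M)$.

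Next I would compute the trace of the right-hand side of Lemma \ref{lemma:adj}. Since $M$ satisfies the assumptions $(*)$, we have $M_{12} = M_{21}^T$ and $M_{22} = CDC^T$, so the matrix $F$ from \eqref{eq:xyz} can be taken to be $C^T$. Then the two adjugate vectors appearing in $\tM$ are transposes of each other: writing $G = \begin{pmatrix} M_{21} & C\end{pmatrix} = \begin{pmatrix} b_1 & \cdots & b_\ell & c_1 & \cdots & c_r\end{pmatrix} \in \R^{m\times(r+\ell)}$, we get $\adjv\begin{pmatrix} M_{21} & C\end{pmatrix} = \adjv(G)$ (a column vector indexed by $\langle m\rangle_{m-r-\ell}$) and $\adjv\begin{pmatrix} M_{12} \\ F\end{pmatrix} = \adjv(G^T) = \adjv(G)^T$ (the corresponding row vector), because the defining sign $(-1)^{|\alpha|}$ and the minor $\det G[\,[r+\ell],\alpha^c] = \det G^T[\alpha^c,[r+\ell]]$ match up. Hence $\tM = (-1)^\ell \det D \cdot \adjv(G)\,\adjv(G)^T$, and since $\adj_{m-r-\ell}(M)$ is block-diagonal with blocks $O$ and $\tM$, we obtain $\tr\adj_{m-r-\ell}(M) = \tr\tM = (-1)^\ell \det D \cdot \adjv(G)^T\adjv(G) = (-1)^\ell \det D \cdot \|\adjv(G)\|^2$. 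Finally $\det D = u_1\cdots u_r$ and $\|\adjv(G)\|^2 = \sum_{\alpha\in\langle m\rangle_{m-r-\ell}} (\det G[\,[r+\ell],\alpha^c])^2 = \sum_{(r+\ell)\times(r+\ell)}|G|^2$, since $\alpha^c$ ranges over all $(r+\ell)$-element column subsets and $G$ has exactly $r+\ell$ columns, so $\det G[\,[r+\ell],\alpha^c]$ runs over all maximal minors of $G$. This yields the claimed identity; the $r=0$ case is identical with $\det D$ replaced by $1$ and $G = M_{21}$.

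The routine-but-necessary checks are: (a) that taking $F = C^T$ in the factorization $M = XYZ$ is legitimate under assumptions $(*)$ — this is immediate since $M_{22} = CDC^T = CD(C^T)$ and $M_{12} = M_{21}^T$ is reproduced by the $(2,1)$, $(1,2)$ blocks of $X$ and $Z$ being transposes; (b) the precise sign bookkeeping identifying $\adjv\begin{pmatrix} M_{12}\\ F\end{pmatrix}$ as the transpose of $\adjv\begin{pmatrix} M_{21} & C\end{pmatrix}$, using the conventions for $\adjv$ of a wide versus a tall matrix stated in the Preliminaries; and (c) the index-set identity $\langle m\rangle_{m-(r+\ell)} \leftrightarrow \langle m\rangle_{r+\ell}$ via complementation, so that summing squared $(m-r-\ell)$-th minors over the complementary index is the same as summing squared maximal $(r+\ell)$-minors.

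I expect the main (minor) obstacle to be exactly point (b): keeping the signs straight in the two adjugate vectors and confirming they multiply to give a genuine Gram matrix $\adjv(G)\adjv(G)^T$ rather than something with stray signs. Since Lemma \ref{lemma:adj} already packages $\adj_{m-r-\ell}(M)$ as a product of the two adjugate vectors times $(-1)^\ell\det D$, the symmetry $M_{12}=M_{21}^T$, $F = C^T$ forces the product to be a rank-one positive-semidefinite-type block $\adjv(G)\adjv(G)^T$ up to the scalar, and its trace is the squared Euclidean norm of $\adjv(G)$; once the sign is pinned down, the rest is the bijection between minors and the evaluation $\det D = u_1\cdots u_r$, both of which are immediate.
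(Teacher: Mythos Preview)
Your proposal is correct and follows essentially the same route as the paper's proof: apply Lemma~\ref{lemma:adj} with $F = C^T$ under the symmetric assumptions $(*)$, then take the trace so that $\sum_{(r+2\ell)\times(r+2\ell)}|M| = \tr\adj_{m-r-\ell}(M) = \tr\tM = (-1)^\ell\det D\,\|\adjv(G)\|^2$, and identify $\|\adjv(G)\|^2$ with the sum of squared maximal minors of $G = (b_1\ \cdots\ b_\ell\ c_1\ \cdots\ c_r)$. One small slip: in your expression $\det G[\,[r+\ell],\alpha^c]$ the row and column indices are swapped (since $G$ is tall, $\alpha^c$ should select rows), but this does not affect the argument.
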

\begin{proof}
If $r>0$, Lemma $\ref{lemma:adj}$ implies that
$\adj_{m-r-\ell}(M) = 
\left(
\begin{smallmatrix}
 O & O \\
 O & \tM
\end{smallmatrix}
\right)$, where %check
%\[
%  \tM = (-1)^\ell \det D \cdot
%\adjv
%\begin{pmatrix}
% M_{21} & C
%\end{pmatrix}
%\adjv
%\begin{pmatrix}
% M_{21}^T\\
% C^T
%\end{pmatrix}.
%\]
 \[
  \tM
= (-1)^\ell 
\det D\cdot
\adjv
\left(
\begin{array}{@{}c@{\,}c@{}}
 \\
 M_{21} & C\\
 \phantom{1}
\end{array}
\right)
\adjv
\left(
\begin{array}{ccc}
 &  M_{21}^T & \\
 &  C^T &
\end{array}
\right) \in \R^{\binom{m}{r+\ell}\times \binom{m}{r+\ell}}.
 \]
By the definitions, we see that the entries of $\adj_{m-r-\ell}(M)$ are minors of size $r + 2\ell$, 
and that the entries of the column vector 
%$\adjv
%\begin{pmatrix}
% M_{21} & C
%\end{pmatrix}$ 
$\adjv
\left(
\begin{array}{@{}c@{\,}c@{}}
 \\
 M_{21} & C\\
 \phantom{1}
\end{array}
\right)$
and the row vector 
%$\adjv
%\begin{pmatrix}
% M_{21}^T \\
% C^T\\
%\end{pmatrix}$ 
$\adjv
\left(
\begin{array}{ccc}
 &  M_{21}^T & \\
 &  C^T &
\end{array}
\right)$
are minors of size $r + \ell$.
Since $\sum_{(r+2\ell)\times (r+2\ell)} |M| = \tr \adj_{m-r-\ell}(M) = \tr \tM$, we obtain the desired equality.
The case $r=0$ is shown similarly.
\end{proof}

\subsection{The Newton polytope}
\label{section:newton_polytope}

%check
In this section, we will show that 
the coefficients of the expansion in Proposition $\ref{prop:expansion}$ are simplified  
if the terms correspond to some points on the boundary of the Newton polytope of the characteristic polynomial.
For a $k$-variable polynomial $f(x) = \sum_{\gamma}f_\gamma x^\gamma\in \R[x]$ where $x^\gamma = x_1^{\gamma_1}\cdots x_k^{\gamma_k}$,
let $\supp f = \{\gamma\in \Z^k:f_\gamma \neq 0\}$.
Then the convex hull of $\supp f$ is called the \textit{Newton polytope} of $f$. %check
\begin{figure}[ht]
   \begin{tikzpicture}[scale=0.7]
    \draw[->](0,0) -- (6.5,0) node[right]{$\gamma_1$};
    \draw[->](0,0) -- (0,6.5) node[above]{$\gamma_2$};

    \draw[dashed](0,3)node[left]{$m$} -- (3,0)node[below]{$m$};
    \draw[very thick](0,6)node[left]{$n$} -- (6,0)node[below]{$n$};
 %    \draw[very thick](2.9,3.1) -- (5,1);
    \draw[very thick](0,3) -- (1,2);
    \draw[very thick](1,2) -- (5,0);
    \draw[very thick](5,0) -- (6,0);
    \draw[very thick](0,3) -- (0,6);

 %    \coordinate (P) at (5,1);
 %    \fill (P) circle (2pt);    
 %    \draw (5,1) node[right]{$(2n-2m+r,2m-r-n)$};

    \draw (4.7,0) node[below]{$2m-r$};
    \draw (0,4.5) node[left]{$E_1$};
    \draw (0.4,2.4) node[below]{$E_2$};
    \draw (3.4,0.8) node[below]{$E_3$};
 %    \draw (2,5) node[below]{$E_4$};

    \coordinate (O) at (1,2);
    \fill (O) circle (2pt);
    \draw (1.9,2.1) node[above] {$(r,m-r)$}; 
  \end{tikzpicture}
 \quad
  \begin{tikzpicture}[scale=0.7]
    \draw[->](0,0) -- (6.5,0) node[right]{$\gamma_1$};
    \draw[->](0,0) -- (0,6.5) node[above]{$\gamma_2$};

    \draw[dashed](0,4)node[left]{$m$} -- (4,0)node[below]{$m$};
    \draw[very thick](0,6)node[left]{$n$} -- (2.2,3.8);
    \draw[very thick](2.9,3.1) -- (5,1);
    \draw[very thick](0,4) -- (1,3);
    \draw[very thick](1,3) -- (5,1);
    \draw[dashed](5,1) -- (6,0)node[below]{$n$};
    \draw[very thick](0,4) -- (0,6);

    \coordinate (P) at (5,1);
    \fill (P) circle (2pt);    
    \draw (5,1) node[right]{$Q$};

    \draw (0,5) node[left]{$E_1$};
    \draw (0.4,3.4) node[below]{$E_2$};
    \draw (4,1.4) node[below]{$E_3$};
 %    \draw (2,5) node[below]{$E_4$};

    \coordinate (O) at (1,3);
    \fill (O) circle (2pt);
    \draw (1.9,3.1) node[above] {$(r,m-r)$}; 
  \end{tikzpicture}
  \caption{$D_0$ and the Newton polytope, where $Q$ is $(2n-2m+r,2m-r-n)$. The left and the right figures correspond to the case $m-r < n-m$ and to the case $m-r> n-m$ respectively.}
  \label{fig:newton_polygon}
\end{figure}

%In this section, we consider the assumptions $(*)$ on $A, B\in \R^{n\times n}$.
%\begin{itemize}
%\item $A = \diag(p_1,\ldots,p_{n-m},0,\ldots,0)$ with $p_i>0$; 
%\item $B$ is a matrix of the form in $(\ref{eq:partition})$ 
% with 
%$B_{21} = 
% \begin{pmatrix}
% b_1 & \cdots & b_{n-m}
% \end{pmatrix}\in \R^{n\times (n-m)}$ and 
%$B_{22} = \sum_{k=1}^r u_k c_kc_k^T$ 
% for some  $\ell,m,r\in \N$ with $m+\ell = n$, $\ell+r\leq m$, $r<m$
%and $u_k \in \R\setminus\{0\}$ and $c_k\in \R^m\setminus\{0\}$.
%\end{itemize}
%Note that $n - m = \ell \leq m - r$ and hence $2m - r - n \geq 0$. 
%Thus the edge $E_2$ intersects with the edge $E_3$ in the Newton diagram in Figure $\ref{fig:newton_polygon}$. 

For $m,n\in \N$ with $m<n$, and $a_k>0\ (k\in [n-m])$, let $A = \diag(a_1,\ldots, a_{n-m},0,\ldots,0)$, $B\in \S^n$. 
Then Proposition $\ref{prop:expansion}$ gives that
\[
  p_{A + tB}(x) =
\sum_{\gamma \in D_0}(-1)^{n - \gamma_2}\langle \adj_{|\gamma|}(A),C_{|\gamma|}^{\gamma_1}(B)\rangle t^{\gamma_1}x^{\gamma_2},
\]
where $|\gamma| = \gamma_1 + \gamma_2$. 
Consider the Newton polytope of $f(t,x) = p_{A + tB}(x)$.
We define
$D_0 = \{\gamma \in \Z^2:0 \leq \gamma_1 \leq n,\ m \leq \gamma_1 + \gamma_2 \leq n \}$.
Let $d_\gamma$ be the coefficient of $t^{\gamma_1}x^{\gamma_2}$ in $p_{A + tB}(x)$.
We will further calculate $d_{\gamma}$. 
%
%check
Suppose that $B$ is partitioned as
\begin{equation}
  B = 
\begin{array}[b]{cl}
\phantom{(}
\begin{array}{ll}
 \mathmakebox[2em]{n-m} \hspace{2em}& \mathmakebox[2em]{m} \hspace{1em}
\end{array} 
\phantom{)}
&  \\
\left(
     \begin{array}{c|c}      
     B_{11} & B_{21}^T\\
      \hline
     B_{21} & B_{22}   
     \end{array} 
\right) 
& \hspace{-1em}
     \begin{array}{l}
      n-m \\
      m
     \end{array}
\end{array},\label{eq:partition0}
\end{equation}
and $B_{22} = CDC^T$ for some
 $r\in \N$, $C =
\begin{pmatrix}
 c_1 & \cdots & c_r
\end{pmatrix}
\in \R^{m\times r}\setminus\{O\}$, $D = \diag(u_1,\ldots,u_r)$,
$u_k\in \R\setminus\{0\}\ (k\in [r])$. Note that $\rank B_{22} = r$.
In the following Theorem, the expressions in (\ref{enum:0}), (\ref{enum:1}), (\ref{enum:2}) and (\ref{enum:3}) correspond to the cases that $\gamma$ is on the edge $E_1$, $E_2$, $E_3$ and below the edge $E_3$ in Figure $\ref{fig:newton_polygon}$, respectively. Recall that we defined $\sum_{0\times 0}|A|=1$.

\begin{thm}
\label{thm:generic}
Let $A = \diag(a_1,\ldots, a_{n-m},0,\ldots,0)$. 
\begin{enumerate}
\item
\label{enum:0}
If 
$\gamma = 
\begin{pmatrix}
 0\\
n
\end{pmatrix}
+ \eta
\begin{pmatrix}
 0\\
-1
\end{pmatrix},\ \eta = 0,\ldots,n-m$, then
$d_\gamma = 
%  \langle \adj_{|\gamma|}(A),C_{|\gamma|}^{\gamma_1}(B)\rangle
 (-1)^{\eta}\sum_{\eta\times \eta}|A|.$
 \item 
\label{enum:1}
If $\gamma = 
\begin{pmatrix}
 0\\
m
\end{pmatrix}
+ \eta\begin{pmatrix}
   1\\
   -1
  \end{pmatrix},\ \eta = 1,\ldots,r
$, then
\[
% \langle \adj_{|\gamma|}(A),C_{|\gamma|}^{\gamma_1}(B)\rangle
d_\gamma
 =  (-1)^{n-m + \eta}\cdot a_1\cdots a_{n-m}\cdot \sum_{\eta\times \eta}
\begin{vmatrix}
 B_{22}
\end{vmatrix}.
\]
%where $\sum_{0\times 0}
%\begin{vmatrix}
% B_{22}
%\end{vmatrix}$ is interpreted as $1$.
%
%In particular, 
%$\langle \adj_{|\gamma|}(A),C_{|\gamma|}^{\gamma_1}(B)\rangle = 0$ for 
%$\eta = r+1, \ldots, m$.

\item 
\label{enum:2}
If $r>0$ and 
$\gamma = 
\begin{pmatrix}
 r\\
m -r
\end{pmatrix}
+ \mu\begin{pmatrix}
   2\\
   -1
  \end{pmatrix},\ \mu = 1,\ldots, \min\{m-r,n-m\}
$, then
\begin{align*}
%&\langle \adj_{|\gamma|}(A),C_{|\gamma|}^{\gamma_1}(B)\rangle\\
d_\gamma 
& = (-1)^{n-m+r+2\mu}\cdot u_1 \cdots u_r\cdot\\
& \sum_{\{k_1,\ldots,k_\mu \}\subset [n-m]}
%\left(
\prod_{k\in [n-m]\setminus\{k_1,\ldots,k_\mu\}}a_{k}
%\right)
  %\cdot 
  \sum_{(r+\mu)\times(r+\mu)}
  \begin{vmatrix}
   b_{k_1} & \cdots & b_{k_\mu} & c_1 & \cdots & c_r
  \end{vmatrix}^2. 
\end{align*}
If $r = 0$, then $u_1\cdots u_r$ is replaced by $1$ in the equality above.
\item 
\label{enum:3}
If $\gamma = 
\begin{pmatrix}
 r\\
m - r
\end{pmatrix}
+ \eta\begin{pmatrix}
   1\\
   -1
  \end{pmatrix} 
+ \mu\begin{pmatrix}
   2 \\
   -1
  \end{pmatrix}
,\ \mu = 0,\ldots,\min\{m-r,n-m\}-1,\ \eta = 1,\ldots, m-r-\mu$, then
$d_\gamma = 0$.
%$\langle \adj_{|\gamma|}(A),C_{|\gamma|}^{\gamma_1}(B)\rangle = 0$.
\end{enumerate}
\end{thm}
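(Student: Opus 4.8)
The plan is to derive all four formulas from Proposition~\ref{prop:expansion}, which gives $d_\gamma = (-1)^{n-\gamma_2}\langle \adj_{|\gamma|}(A), C_{|\gamma|}^{\gamma_1}(B)\rangle$. Since $A = \diag(p_1,\ldots,p_{n-m},0,\ldots,0)$, the matrix $\adj_{|\gamma|}(A)$ is diagonal and supported only on index sets $\alpha^c$ contained in $[n-m]$, so the pairing collapses to a sum over $\gamma \subset [n-m]$ with $\#\gamma = n-|\gamma|$ of $\det(A[\gamma])\cdot \sum_{\gamma_1\times\gamma_1}|B[\gamma^c]|$; that is, $d_\gamma = (-1)^{n-\gamma_2}\sum_{\gamma\subset[n-m],\ \#\gamma = n-|\gamma|}\bigl(\prod_{k\in\gamma}p_k\bigr)\sum_{\gamma_1\times\gamma_1}|B[\gamma^c]|$. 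This single identity is the common starting point for all four cases; the work is in evaluating the inner sum-of-minors for each location of $\gamma$ on the Newton polytope.

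Cases (\ref{enum:0}) and (\ref{enum:1}) are the easy ones. For (\ref{enum:0}), $\gamma_1 = 0$ and $|\gamma| = n-\eta$, so the index sets $\gamma$ run over $\eta$-subsets of $[n-m]$ and $\sum_{0\times 0}|B[\gamma^c]| = 1$; the formula reduces to $(-1)^\eta\sum_{\eta\times\eta}|A|$ directly. For (\ref{enum:1}), $\gamma_1 = \eta$ and $|\gamma| = m$, forcing $\#\gamma = n-m$, i.e.\ $\gamma = [n-m]$ is the unique choice, $\gamma^c = \{n-m+1,\ldots,n\}$, $\prod_{k\in\gamma}p_k = p_1\cdots p_{n-m}$, and $\sum_{\eta\times\eta}|B[\gamma^c]| = \sum_{\eta\times\eta}|B_{22}|$; the sign $(-1)^{n-\gamma_2} = (-1)^{n-(m-\eta)}$ matches $(-1)^{n-m+\eta}$.

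The substantive cases are (\ref{enum:2}) and (\ref{enum:3}), and here the plan is to invoke Proposition~\ref{prop:sum_pminors} termwise. For $\gamma$ of the form $\binom{r}{m-r}+\eta\binom{1}{-1}+\mu\binom{2}{-1}$ we have $\gamma_1 = r+\eta+2\mu$ and $\#\gamma = n-|\gamma| = n-m-\mu$, so each index set $\gamma$ in the outer sum is an $(n-m-\mu)$-subset of $[n-m]$, i.e.\ its complement in $[n-m]$ picks out $\mu$ columns $b_{k_1},\ldots,b_{k_\mu}$; then $\gamma^c$ (in $[n]$) indexes a principal submatrix of $B$ of size $m+\mu$, built from the rows/columns $\{k_1,\ldots,k_\mu\}\cup\{n-m+1,\ldots,n\}$. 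This submatrix has exactly the block shape of assumption~$(*)$ with the roles $\ell\leftrightarrow\mu$, the off-diagonal block given by the $b_{k_i}$ columns, and the bottom-right block $B_{22} = CDC^T$ with $D = \diag(u_1,\ldots,u_r)$; assumption~$(*)$(iv), $r+\mu\le m$, holds because $\mu\le m-r$. So Proposition~\ref{prop:sum_pminors} applies with $r+2\mu$ in place of $r+2\ell$ and gives $\sum_{(r+2\mu)\times(r+2\mu)}|B[\gamma^c]| = (-1)^\mu u_1\cdots u_r\sum_{(r+\mu)\times(r+\mu)}|b_{k_1}\ \cdots\ b_{k_\mu}\ c_1\ \cdots\ c_r|^2$ when $\eta = 0$ — which is precisely case (\ref{enum:2}), after checking that the requested minor size $\gamma_1 = r+2\mu$ equals $r+2\mu$ and folding the sign $(-1)^\mu$ into $(-1)^{n-\gamma_2} = (-1)^{n-(m-r-\mu)}$ to get $(-1)^{n-m+r+2\mu}$. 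For case (\ref{enum:3}), $\eta\ge 1$, so the minor size we need from $B[\gamma^c]$ is $\gamma_1 = r+\eta+2\mu > r+2\mu$, strictly larger than twice the number of "$\ell$-columns" plus the rank $r$ of the bottom-right block; the key point is that Lemma~\ref{lemma:adj}'s factorization forces every such minor to vanish because the corresponding adjugate block is zero once we ask for too many rows (the $Z$-factor's determinant picks up a zero column). I expect the main obstacle to be exactly this step: verifying carefully that for $\eta\ge 1$ the sum of $(r+\eta+2\mu)\times(r+\eta+2\mu)$ minors of a matrix of the block form in $(*)$ — equivalently $\tr\adj_{m+\mu-r-\eta-2\mu}$ of it — vanishes, which amounts to a rank bound: the matrix $B[\gamma^c]$ has the $\ell=\mu$ identity-type block only of size $\mu$, and its rank is at most $2\mu + r$, so all minors of size exceeding $2\mu+r$ are zero. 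Making the rank count rigorous via the $X Y Z$ factorization (noting $XYZ$ routes through an inner dimension $r+2\mu$) is the crux; everything else is bookkeeping of signs and index translations.
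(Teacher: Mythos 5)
Your proposal is correct and follows essentially the same route as the paper: collapse the pairing $\langle \adj_{|\gamma|}(A), C_{|\gamma|}^{\gamma_1}(B)\rangle$ using the diagonality of $A$, read off cases (\ref{enum:0}) and (\ref{enum:1}) directly, apply Proposition \ref{prop:sum_pminors} termwise with $\ell=\mu$ for case (\ref{enum:2}), and kill case (\ref{enum:3}) by the rank bound $\rank B[\{k_1,\ldots,k_\mu,n-m+1,\ldots,n\}]\le r+2\mu < r+2\mu+\eta$. The only cosmetic difference is that the paper obtains this rank bound directly from $\rank\begin{pmatrix} b_{k_1} & \cdots & b_{k_\mu} & CDC^T\end{pmatrix}\le r+\mu$ rather than routing through the $XYZ$ factorization, but your version is equally valid.
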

Before proceeding to the proof, we provide an illustrative example for 
$n=4,\ m = 2$ and $r = 1$.
%%%%%%%%%%%%%%%%%%%%%%%%%%%%%%%%%%%%%%%%%%%%%%%%%%%%%%%%%%%%
\begin{example} 
\label{ex:newton_polytope}
 We consider $A,B$ in Example $\ref{ex:expansion}$.
In addition, we assume 
{\small 
$
 \begin{pmatrix}
  b_{33} & b_{34} \\
  b_{34} & b_{44}
 \end{pmatrix} = 
u_1
\begin{pmatrix}
  c_{11}\\
  c_{12}
 \end{pmatrix}
\begin{pmatrix}
	       c_{11} & c_{12}
	      \end{pmatrix}
$}.
Then Figure $\ref{fig:newton_polygon_ex}$ is the Newton polytope of $p_{A + tB}(x)$.
By applying Theorem $\ref{thm:generic}$, we can simplify the coefficients of terms corresponding to points on the lower and left boundary of the Newton polytope. 
As a result, we obtain the following expression:
\begin{multline*}
p_{A + tB}(x) = 
  x^4-\left(
a_1 + a_2 + O(t)
\right)x^3
+ (a_1 a_2 + O(t))x^2
 -\left(
u_1 a_1a_2 (c_{11}^2 + c_{12}^2)t + O(t^2)
\right)x \\
+
 u_1\left(
a_2 \begin{vmatrix}
 b_{13} & c_{11} \\
 b_{14} & c_{12}
 \end{vmatrix}^2 
+ a_1 \begin{vmatrix}
 b_{23} & c_{11} \\
 b_{24} & c_{12}
 \end{vmatrix}^2 
\right)t^3
+
\begin{vmatrix}
 B
\end{vmatrix}t^4.
\end{multline*}
\end{example}
\begin{figure}[ht]
    \begin{tikzpicture}[scale=0.7]
    \draw[->](0,0) -- (5,0) node[right]{$\gamma_1$};
    \draw[->](0,0) -- (0,5) node[above]{$\gamma_2$};

    \draw[dashed](0,2)node[left]{$2$} -- (2,0)node[below]{$2$};
    \draw[very thick](0,4)node[left]{$4$} -- (4,0)node[below]{$4$};
 %    \draw[very thick](2.9,3.1) -- (5,1);
    \draw[very thick](0,2) -- (1,1);
    \draw[very thick](1,1) -- (3,0);
    \draw[very thick](3,0) -- (4,0);
    \draw[very thick](0,2) -- (0,4);

 %    \coordinate (P) at (5,1);
 %    \fill (P) circle (2pt);    
 %    \draw (5,1) node[right]{$(2n-2m+r,2m-r-n)$};

    \draw (3,0) node[below]{$3$};
%    \draw (0,4.5) node[left]{$E_1$};
%    \draw (0.4,2.4) node[below]{$E_2$};
%    \draw (3.4,0.8) node[below]{$E_3$};
 %    \draw (2,5) node[below]{$E_4$};

    \coordinate (O) at (1,1);
    \fill (O) circle (2pt);
    \draw (0.8,1.4) node[right] {$(1,1)$}; 
  \end{tikzpicture}
  \caption{the Newton polytope in Example $\ref{ex:newton_polytope}$}
  \label{fig:newton_polygon_ex}
\end{figure}
%%%%%%%%%%%%%%%%%%%%%%%%%%%%%%%%%%%%%%%%%%%%%%%%%%%%%%%%%%%%
\begin{proof}
Since $d_\gamma = (-1)^{n - \gamma_2}\langle \adj_{|\gamma|}(A),C_{|\gamma|}^{\gamma_1}(B)\rangle$, we calculate $\langle \adj_{|\gamma|}(A),C_{|\gamma|}^{\gamma_1}(B)\rangle$.

(\ref{enum:0}) Since $\gamma_1 = 0$, we have that $C_{|\gamma|}^{\gamma_1}(B)$ is the identity matrix and hence $\langle \adj_{|\gamma|}(A),C_{|\gamma|}^{\gamma_1}(B)\rangle = \tr \adj_{|\gamma|}(A) = \tr\adj_{n-\eta}(A) = \sum_{\eta\times \eta}|A|$.

(\ref{enum:1}) Since $|\gamma|=m$, we see that an entry of $\adj_{|\gamma|}(A)=\adj_m(A)$ is a cofactor of size $n-m$. %the determinant of a submatrix with the size of $n-m$.
Since $A = \diag(a_1,\ldots,a_{n-m},0,\ldots,0)$, the unique nonzero entry of the diagonal matrix $\adj_m(A)$ is 
$\det A\big[\{1,\ldots,n-m\}\big] = a_1\cdots a_{n-m}$.
If $\eta\neq 0$, then $ 
C_{|\gamma|}^{\gamma_1}(B)\big[\{n-m+1,\ldots,n\}\big] = 
\sum_{\gamma_1\times \gamma_1}
\begin{vmatrix}
 B\big[\{n-m+1,\ldots,n\}\big]
\end{vmatrix}=
\sum_{\eta\times \eta}
\begin{vmatrix}
 B_{22}
\end{vmatrix}$.
If $\eta = 0$, then %$C_{|\gamma|}^{\gamma_1}(B)$ is the identity matrix and thus 
$C_{|\gamma|}^{\gamma_1}(B)\big[\{n-m+1,\ldots,n\}\big] = 1$.

(\ref{enum:2}) Note that 
$n-m-\mu \geq n-m - \min\{m-r, n-m\} \geq \max\{n-2m+r,0\}\geq 0$ and that
an entry of $\adj_{|\gamma|}(A) = \adj_{m+\mu}(A)$ is 
a cofactor of size $n-m-\mu$. 
Since $A = \diag(a_1,\ldots,a_{n-m},0,\ldots,0)$, a nonzero entry of $\adj_{m+\mu}(A)$ is given as
\[
  \det A\big[[n-m]\setminus\{k_1,\ldots,k_{\mu}\}\big]
= \prod_{k\in [n-m]\setminus\{k_1,\ldots,k_\mu\}}a_{k},
\]
where $\{k_1,\ldots,k_\mu\}\subset [n-m]$.
%Let 
%$C = 
%\begin{pmatrix}
% c_1 & \cdots & c_r
%\end{pmatrix}
%$ and 
%$D = \diag(u_1,\dots,u_r)$.
%Then $B_{22} = CDC^T$.
Then the corresponding entry of $C_{|\gamma|}^{\gamma_1}(A) = C_{m+\mu}^{r+2\mu}(B)$ to $A\big[[n-m]\setminus\{k_1,\ldots,k_{\mu}\}\big]$ is
{\small %
\begin{align}
& 
\sum_{(r+2\mu)\times(r+2\mu)}
\begin{vmatrix}
 B\big[\{k_1,\ldots,k_\mu,n-m+1,\ldots,n\} \big]
\end{vmatrix}\label{eq:sum_minors}\\
& = \sum_{(r+2\mu)\times(r+2\mu)} \begin{vmatrix}
   &  & & & b_{k_1}^T & \\
  & B[\{k_1,\ldots,k_\mu\}]       & & & \vdots    &\\
  & & & & b_{k_\mu}^T &\\
              &        &            & &   \\             
  b_{k_1}     & \cdots & b_{k_\mu} & &  CDC^T %{\mbox{\normalfont\Large $B_{22}$}}   \\
              &        &            & &  
 \end{vmatrix}. \notag
\end{align}}%
Here, we see that $B\big[\{k_1,\ldots,k_\mu,n-m+1,\ldots,n\} \big]$ 
satisfies the assumptions $(*)$ 
since $r + \mu \leq m$ and the matrix has size $m + \mu$.
%an entry of $C_{m+\mu}^{r+2\mu}(B)$ is the sum of minors of $B$ of size $m + \mu$, 
Thus Proposition $\ref{prop:sum_pminors}$ implies that $(\ref{eq:sum_minors})$ is equal to 
\[
 (-1)^\mu u_1 \cdots u_r
  \sum_{(r+\mu)\times(r+\mu)}
  \begin{vmatrix}
   b_{k_1} & \cdots & b_{k_\mu} & c_1 & \cdots & c_r
  \end{vmatrix}^2.
\]

(\ref{enum:3}) Since $|\gamma| = m + \mu$, a nonzero entry of $\adj_{|\gamma|}(A)$ is the one obtained in the proof of (\ref{enum:2}). The corresponding entry of 
$C_{|\gamma|}^{\gamma_1}(B)=C_{m+\mu}^{r+2\mu+\eta}(B)$ is
\begin{equation}
  \sum_{(r+2\mu+\eta)\times(r+2\mu+\eta)}
\begin{vmatrix}
 B\big[\{k_1,\ldots,k_\ell,n-m+1,\ldots,n\} \big]
\end{vmatrix},\label{eq:sum_minors0}
\end{equation}
which is the sum of the principal minors of $B\big[\{k_1,\ldots,k_\mu,n-m+1,\ldots,n\} \big]$ of size $r + 2\mu + \eta$, while the proof of (\ref{enum:2}) considers the principal minors of the same matrix of size $r + 2\mu$.
Since $\rank \begin{pmatrix}
 b_{k_1} & \cdots & b_{k_\mu} & CDC^T
\end{pmatrix} \leq r + \mu$, we see that
\[
 \rank B\big[\{k_1,\ldots,k_\mu,n-m+1,\ldots,n\} \big] 
\leq \mu + r + \mu < r + 2\mu + \eta.
\]
Thus all summands of $(\ref{eq:sum_minors0})$ are zero.
Therefore, we obtain $\langle\adj_{|\gamma|}(A), C_{|\gamma|}^{\gamma_1}(B)\rangle = 0$.
%\begin{align*}
%& 
%\sum_{(r+2\ell+s)\times(r+2\ell+s)}
%\begin{vmatrix}
% B\big[\{k_1,\ldots,k_\ell,n-m+1,\ldots,n\} \big]
%\end{vmatrix}\\
%%
%& = \sum_{(r+2\ell+s)\times(r+2\ell+s)} \begin{vmatrix}
%   &  & & & b_{k_1}^T & \\
%  & B[\{k_1,\ldots,k_\ell\}]       & & & \vdots    &\\
%  & & & & b_{k_\ell}^T &\\
%              &        &            & &   \\             
%  b_{k_1}     & \cdots & b_{k_\ell} & &  B_{22} %{\mbox{\normalfont\Large $B_{22}$}}   \\
%              &        &            & &  
% \end{vmatrix} \\
%& = (-1)^\ell u_1 \cdots u_r
%  \sum_{(r+\ell)\times(r+\ell)}
%  \begin{vmatrix}
%   b_{k_1} & \cdots & b_{k_\ell} & c_1 & \cdots & c_r
%  \end{vmatrix}^2
%\end{align*}

\end{proof}

%%%%%%%%%%%%%%%%%%%%%%%%%%%%%%%%%%%%%%%%%%%%%%%%%%%%%%%%%%%%
%
%
%%%%%%%%%%%%%%%%%%%%%%%%%%%%%%%%%%%%%%%%%%%%%%%%%%%%%%%%%%%%
Next, we consider the case that there exists $\gamma$ in the case (\ref{enum:2}) of Theorem $\ref{thm:generic}$ such that 
$\langle \adj_{|\gamma|}(A),C_{|\gamma|}^{\gamma_1}(B)\rangle = 0$.
In this case, the theorem below shows that
all the terms of $p_{A+tB}$ corresponding to the points
that are located right and lower to $\gamma$ 
in the plane of the Newton polytope are zero.

\begin{figure}[ht]
    \begin{tikzpicture}[scale=0.7]
    \draw[->](0,0) -- (6.5,0) node[right]{$\gamma_1$};
    \draw[->](0,0) -- (0,6.5) node[above]{$\gamma_2$};

%    \draw[dashed](0,3)node[left]{$m$} -- (3,0)node[below]{$m$};
    \draw[dashed](0,6)node[left]{$n$} -- (5,1);
 %    \draw[very thick](2.9,3.1) -- (5,1);
    \draw[very thick](0,3) -- (1,2);
    \draw[very thick](1,2) -- (3,1);
    \draw[dashed](3,1) -- (5,1);
%    \draw[very thick](5,0)node[below]{$m-r$} -- (6,0);
    \draw[very thick](0,3)node[left]{$m$} -- (0,6);

 %    \coordinate (P) at (5,1);
 %    \fill (P) circle (2pt);    
 %    \draw (5,1) node[right]{$(2n-2m+r,2m-r-n)$};

    \draw (0,4.5) node[left]{$E_1$};
    \draw (0.4,2.5) node[below]{$E_2$};
    \draw (1.5,1.6) node[below]{$E_3$};
 %    \draw (2,5) node[below]{$E_4$};

    \coordinate (O) at (1,2);
    \fill (O) circle (2pt);
    \draw (1.9,2.1) node[above] {$(r,m-r)$}; 

    \coordinate (O) at (3,1);
    \fill (O) circle (2pt);
    \draw (3,0.9) node[below]{$(r+2\tmu, m-r-\tmu)$};
  \end{tikzpicture}
  \caption{the Newton polytope in a degenerate case}
  \label{fig:newton_polygon1}
\end{figure}

\begin{thm}
\label{thm:degenerate}
Let $A = \diag(a_1,\ldots,a_{n-m},0,\ldots,0)$ and $\tmu\in [\min\{m-r,n-m\}]$.
Then $d_{\tgamma} = 0$ for $\tgamma = 
\begin{pmatrix}
 r\\
m - r
\end{pmatrix}
+ \tmu\begin{pmatrix}
   2 \\
   -1
  \end{pmatrix}$ 
  if and only if
$\rank
\begin{pmatrix}
 B_{21} & B_{22}
\end{pmatrix}
 < r + \tmu$.
In this case, we have $d_\gamma = 0$ for
$\gamma = 
\begin{pmatrix}
 r\\
m - r
\end{pmatrix}
+ \mu\begin{pmatrix}
   2 \\
   -1
  \end{pmatrix}
+ 
\nu\begin{pmatrix}
 1\\
 0
 \end{pmatrix},\  \mu = \tmu,\ldots,\min\{m-r,n-m\},\ \nu = 0,1,\ldots,n - m - \mu
$.
\end{thm}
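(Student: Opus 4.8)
The plan is to reduce everything to computing the relevant coefficient $d_\gamma = (-1)^{n-\gamma_2}\langle \adj_{|\gamma|}(A), C^{\gamma_1}_{|\gamma|}(B)\rangle$ via the same mechanism used in the proof of Theorem~\ref{thm:generic}(\ref{enum:2})--(\ref{enum:3}), and then to show that a rank drop at the corner forces a rank drop for every relevant submatrix to the lower-right. First I would establish the ``if and only if'' for $\tgamma$ itself. By the computation in the proof of Theorem~\ref{thm:generic}(\ref{enum:2}), with $\mu = \tmu$, the coefficient $d_{\tgamma}$ is (up to the sign and the factor $u_1\cdots u_r$) the sum over $\{k_1,\dots,k_{\tmu}\}\subset[n-m]$ of $\bigl(\prod_{k\notin\{k_1,\dots,k_{\tmu}\}} p_k\bigr)$ times $\sum_{(r+\tmu)\times(r+\tmu)}\begin{vmatrix} b_{k_1} & \cdots & b_{k_{\tmu}} & c_1 & \cdots & c_r\end{vmatrix}^2$. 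Since all $p_k>0$ and all $u_k\neq 0$, and since each such summand is a sum of squares, $d_{\tgamma}=0$ if and only if every $(r+\tmu)\times(r+\tmu)$ minor of every matrix $\begin{pmatrix} b_{k_1} & \cdots & b_{k_{\tmu}} & c_1 & \cdots & c_r\end{pmatrix}$ (ranging over all $\tmu$-subsets of $[n-m]$) vanishes. That is equivalent to saying every $(r+\tmu)$-column submatrix of $\begin{pmatrix} b_1 & \cdots & b_{n-m} & c_1 & \cdots & c_r\end{pmatrix} = \begin{pmatrix} B_{21} & C\end{pmatrix}$ has rank $<r+\tmu$, i.e.\ $\rank\begin{pmatrix} B_{21} & C\end{pmatrix} < r+\tmu$; and since $B_{22}=CDC^T$ with $D$ invertible, $\Im C = \Im B_{22}$, so $\rank\begin{pmatrix} B_{21} & C\end{pmatrix} = \rank\begin{pmatrix} B_{21} & B_{22}\end{pmatrix}$. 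This gives the claimed equivalence.

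Next I would handle the propagation claim. Assume $\rank\begin{pmatrix} B_{21} & B_{22}\end{pmatrix} < r+\tmu$, equivalently $\rank\begin{pmatrix} B_{21} & C\end{pmatrix} \le r+\tmu-1$. Fix $\mu$ with $\tmu\le\mu\le\min\{m-r,n-m\}$ and $\nu$ with $0\le\nu\le n-m-\mu$; write $\gamma = \binom{r}{m-r} + \mu\binom{2}{-1} + \nu\binom{1}{0}$, so $\gamma_1 = r+2\mu+\nu$, $\gamma_2 = m-r-\mu$, $|\gamma| = m+\mu+\nu$. As in the proof of Theorem~\ref{thm:generic}(\ref{enum:3}), a nonzero entry of $\adj_{|\gamma|}(A) = \adj_{m+\mu+\nu}(A)$ is a principal minor $\det A[[n-m]\setminus\{k_1,\dots,k_{\mu+\nu}\}]$ for some $(\mu+\nu)$-subset of $[n-m]$, and the corresponding entry of $C^{\gamma_1}_{|\gamma|}(B)$ is $\sum_{(r+2\mu+\nu)\times(r+2\mu+\nu)}\bigl|B[\{k_1,\dots,k_{\mu+\nu}\}\cup\{n-m+1,\dots,n\}]\bigr|$. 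The key estimate is that this submatrix has rank at most $(\mu+\nu) + \rank\begin{pmatrix} B_{21} & C\end{pmatrix}$: its lower-left block is built from the columns $b_{k_1},\dots,b_{k_{\mu+\nu}}$ and its lower-right block is $CDC^T$, so the bottom $m$ rows span a space of dimension at most $\rank\begin{pmatrix} b_{k_1} & \cdots & b_{k_{\mu+\nu}} & C\end{pmatrix} \le \rank\begin{pmatrix} B_{21} & C\end{pmatrix} \le r+\tmu-1$, and the top $\mu+\nu$ rows contribute at most $\mu+\nu$ more. Hence the rank is at most $\mu+\nu+r+\tmu-1 \le 2\mu+\nu+r-1 < r+2\mu+\nu$ (using $\tmu\le\mu$), so every principal minor of size $r+2\mu+\nu$ vanishes, the sum $(\ref{eq:sum_minors0})$-type expression is zero, and therefore $d_\gamma = \pm\langle\adj_{|\gamma|}(A), C^{\gamma_1}_{|\gamma|}(B)\rangle = 0$.

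I would also need to verify that the index range is consistent, i.e.\ that for these $\gamma$ the point indeed lies in $D_0$ so that the coefficient formula from Proposition~\ref{prop:expansion} applies: $\gamma_2 = m-r-\mu \ge 0$ since $\mu\le m-r$, $\gamma_1+\gamma_2 = m+\mu+\nu \ge m$, and $\gamma_1+\gamma_2 \le m + (m-r) + (n-m-\mu) \le n$ when $\mu \ge m-r$; for $\mu < m-r$ one still has $\gamma_1+\gamma_2 = m+\mu+\nu$ and $\nu \le n-m-\mu$ forces $\gamma_1+\gamma_2 \le n$, and $\gamma_1 = r+2\mu+\nu \le n$ follows similarly — a routine check I would include briefly. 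The main obstacle is the rank bound on the combined submatrix $B[\{k_1,\dots,k_{\mu+\nu}\}\cup\{n-m+1,\dots,n\}]$: one must argue cleanly that adjoining $\mu+\nu$ rows/columns indexed in $[n-m]$ to the block $B_{22}=CDC^T$ raises the rank by at most $\mu+\nu$ \emph{and} that the relevant column span is exactly $\Span\{b_{k_1},\dots,b_{k_{\mu+\nu}}\} + \Im C$, which is contained in the column span of $\begin{pmatrix} B_{21} & C\end{pmatrix}$. Everything else is a direct reuse of the machinery already set up in Proposition~\ref{prop:sum_pminors} and Theorem~\ref{thm:generic}.
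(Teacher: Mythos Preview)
Your proposal is correct and follows essentially the same route as the paper: use the sum-of-squares formula from Theorem~\ref{thm:generic}(\ref{enum:2}) to reduce $d_{\tgamma}=0$ to a rank condition on $\begin{pmatrix}B_{21}&C\end{pmatrix}$ (hence on $\begin{pmatrix}B_{21}&B_{22}\end{pmatrix}$), and then bound the rank of each principal submatrix $B[\{k_1,\dots,k_{\mu+\nu}\}\cup\{n-m+1,\dots,n\}]$ by $(\mu+\nu)+\rank\begin{pmatrix}B_{21}&C\end{pmatrix}<r+2\mu+\nu$ to kill all $(r+2\mu+\nu)$-minors. The only place you are slightly looser than the paper is the step ``every matrix $\begin{pmatrix}b_{k_1}&\cdots&b_{k_{\tmu}}&c_1&\cdots&c_r\end{pmatrix}$ has rank $<r+\tmu$'' $\Leftrightarrow$ ``$\rank\begin{pmatrix}B_{21}&C\end{pmatrix}<r+\tmu$'': the forward implication needs the linear independence of $c_1,\dots,c_r$ (so that a maximal independent set of columns of $\begin{pmatrix}B_{21}&C\end{pmatrix}$ can be taken to contain all the $c_j$), which the paper spells out and you should too.
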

\begin{proof}
If $d_{\tgamma} = 
(-1)^{n - \tgamma_2}\langle \adj_{|\tgamma|}(A),C_{|\tgamma|}^{\tgamma_1}(B)\rangle
= 0$, then (\ref{enum:2}) of Theorem $\ref{thm:generic}$ gives that
\[
  \sum_{(r+\tmu)\times(r+\tmu)}
  \begin{vmatrix}
   b_{k_1} & \cdots & b_{k_{\tmu}} & c_1 & \cdots & c_r
  \end{vmatrix}^2 = 0 
\]
and hence $\rank 
\begin{pmatrix}
    b_{k_1} & \cdots & b_{k_{\tmu}} & c_1 & \cdots & c_r
\end{pmatrix} 
< r + \tmu$ 
for all $\{k_1,\ldots,k_{\tmu}\} \subset [n-m]$.
Suppose
$q: = \rank
\begin{pmatrix}
 B_{21} & B_{22}
\end{pmatrix} \geq r + \tmu$.
Since $c_1,\ldots, c_r$ are linearly independent and span the column space of $B_{22}$, 
there are 
$q - r$ column vectors $b'_{k_1}, \ldots, b'_{k_{q-r}}$ of $B_{21}$ such that $c_1,\ldots, c_r, b'_{k_1}, \ldots, b'_{k_{q-r}}$ form a basis for the column space of 
$
\begin{pmatrix}
 B_{21} & B_{22}
\end{pmatrix}
$. 
Since $q-r\geq \tmu$, we have 
$\rank 
\begin{pmatrix}
    b'_{k_1} & \cdots & b'_{k_{\tmu}} & c_1 & \cdots & c_r
\end{pmatrix} 
= r + \tmu$.
Thus
$\rank
\begin{pmatrix}
 B_{21} & B_{22}
\end{pmatrix} \geq r + \tmu$.
The converse is obvious.

Next, suppose 
$\rank
\begin{pmatrix}
 B_{21} & B_{22}
\end{pmatrix} < r + \tmu
$.
For $\mu = \tmu,\ldots, \min\{m-r,n-m\}$,
$\nu = 0,1,\ldots, n - m - \mu$, an entry of 
$\adj_{|\gamma|}(A) = \adj_{m+\mu+\nu}(A)$ is a cofactor of 
size $n - m - \mu - \nu$. 
Since $A = \diag(a_1,\dots,a_{n-m},0,\ldots,0)$, a nonzero entry of $\adj_{m+\mu+\nu}(A)$ is given as
\[
  \det A\big[[n-m]\setminus\{k_1,\ldots,k_{\mu+\nu}\}\big]
= \prod_{k\in [n-m]\setminus\{k_1,\ldots,k_{\mu+\nu}\}}a_{k}
\]
for some $\{k_1,\ldots,k_{\mu+\nu}\}\subset [\min\{m-r,n-m\}]$.
The corresponding entry of $C_{|\gamma|}^{\gamma_1}=C_{m+\mu+\nu}^{r+2\mu+\nu}(B)$ is 
\begin{equation}
  \sum_{(r+2\mu+\nu)\times(r+2\mu+\nu)}
\begin{vmatrix}
 B\big[\{k_1,\ldots,k_{\mu+\nu},n-m+1,\ldots,n\} \big]
\end{vmatrix}.\label{eq:sum_minors1}
\end{equation}
Since 
$\rank
\begin{pmatrix}
 B_{21} & B_{22}
\end{pmatrix} < r + \tmu$, we have
\[
 \rank
\begin{pmatrix}
 B\big[\{k_1,\ldots,k_{\mu+\nu},n-m+1,\ldots,n\} \big]
\end{pmatrix} < r + \tmu + \mu + \nu
\leq r + 2\mu + \nu.
\]
Thus all summands in $(\ref{eq:sum_minors1})$ are zero.
Therefore, we obtain $d_{\gamma} = (-1)^{n-\gamma_2}\langle\adj_{|\gamma|}(A), C_{|\gamma|}^{\gamma_1}(B)\rangle = 0$.
\end{proof}
\begin{example} 
\label{ex:degenerate}
Consider $p_{A+tB}(x)$ in Example $\ref{ex:newton_polytope}$.
If the coefficients 
$d_{(1,0)} =
u_1 a_2 
\left|
\begin{smallmatrix}
 b_{13} & c_{11} \\
 b_{14} & c_{12}
 \end{smallmatrix}
\right|^2 
+ 
u_1 a_1
\left|
\begin{smallmatrix}
 b_{23} & c_{11} \\
 b_{24} & c_{12}
 \end{smallmatrix}
\right|^2 = 0
$, then 
$d_{(2,0)} = 
\begin{vmatrix}
 B
\end{vmatrix} = 0.
$
\end{example}
\section{Leading terms of eigenvalues and the method of the Newton diagram}
\label{section:newton_diagram}
\subsection{The method of the Newton diagram}
For $A,B\in \S^n$ and a characteristic polynomial $p_{A+tB}(x)$, we consider the Newton diagram constructed in the following way.
First, draw the the Newton polytope of $p_{A+tB}$ as a polynomial in $(t,x)$.
Second, translate this Newton polytope so that 
the point $(0,n)$ coincides with the origin, and rotate it by $90^\circ$ degree.
Finally, extend the line segment containing the right most point until it intersects with the line $\gamma_2 = n$. 
The obtained diagram is called the \textit{Newton diagram} associated with $p_{A + tB}(x)$.
Figure $\ref{fig:newton_diagram}$ is the Newton diagram constructed from the Newton polytope in Figure $\ref{fig:newton_polygon1}$.

\begin{figure}[ht]
  \centering
  \begin{tikzpicture}[scale=0.7]
    \draw[->](0,0) -- (6.5,0) node[right]{$\gamma_2$};
    \draw[](0,0) -- (0,2.6); 
    \draw[->](0,3.4) -- (0,6)node[above]{$\gamma_1$};

%    \draw[dashed](0,4)node[left]{$m$} -- (1,3);
%    \draw[very thick, dashed](0,9)node[left]{$n$} -- (7.5,1.5);
    \draw[very thick](0,0) -- (3,0);
    \draw[very thick](3,0)node[below]{$n-m$} -- (4,1);
    \draw[very thick](4,1) -- (5,3);
    \draw[very thick, dashed](5,3) -- (6,5)node[left]{$(n,2m - r)$};
    \draw[dashed](6,0)node[below]{$n$} -- (6,6);
%    \draw[very thick, dashed](5,1) -- (9,5)node[right]{$(n,m)$};
       
%    \draw (0.4,3.5) node[below]{$E_1$};
%    \draw (4.4,1.2) node[below]{$E_2$};
    \coordinate (O) at (0,0);
    \fill (O) circle (2pt); 
    \draw (0,0) node[below] {$O$}; 

    \coordinate (P1) at (3,0);
    \fill (P1) circle (2pt); 

    \coordinate (P0) at (5,3);
    \fill (P0) circle (2pt); 
    \draw (5,3) node[left]{$(n-m+r + \tmu, r + 2\tmu)$};

    \coordinate (P2) at (4,1);
    \fill (P2) circle (2pt);
    \draw (4,1.2) node[left] {$(n-m+r,r)$}; 

    \coordinate (P3) at (6,5);
    \fill (P3) circle (1.5pt); 

    \draw (1.5,0) node[below] {$E'_1$}; 
    \draw (4,0) node[above] {$E'_2$}; 
    \draw (4.5,2) node[right] {$E'_3$}; 

%    \coordinate (P) at (9,9);
%    \fill (P) circle (2pt);
%    \draw (9,9) node[right] {$(n,n)$}; 
  \end{tikzpicture}
  \caption{The Newton diagram associated with the characteristic polynomial, which is obtained by rotating the left of Figure $\ref{fig:newton_polygon}$.}
  \label{fig:newton_diagram}
\end{figure}

It is well-known that the slope $d$ of the edge $\Gamma$ of the Newton diagram coincides with 
the leading degree of $k$ eigenvalues of $A+tB$, where $k$ is the length of the projection of $\Gamma$ onto $\gamma_2$ axis.
In addition, for $f(t,x) = p_{A + tB}(x)$, we write $f(t,x) = \sum_{\gamma} f_\gamma t^{\gamma_1}x^{\gamma_2}$
for some $f_\gamma\in \R$ and define $f_\Gamma(x) = \sum\{f_\gamma x^{\gamma_2}:\gamma \in \Gamma \cap \supp f\}$.
Then, the leading coefficients of the eigenvalues with leading degree $d$ are the nonzero solutions to the equation $f_\Gamma(x) = 0$; see, e.g. \cite{MBO}, \cite{MD}.

\begin{example}
\label{ex:diagram}
Consider 
 {\scriptsize
 \[
 A = 
 \begin{pmatrix}
 1 & 0 & 0 & 0 \\
 0 & 0 & 0 & 0 \\
  0 & 0 & 0 & 0 \\
 0 & 0 & 0 & 0 
 \end{pmatrix},\quad
 B = 
 \begin{pmatrix}
 0 & 0 & 1 & b \\
 0 & 2 & 1 & c \\
 1 & 1 & 1 & 0 \\
 b & c & 0 & 0
 \end{pmatrix}.
 \]}%
We demonstrate the method of the Newton diagram.
 Proposition $\ref{prop:expansion}$ gives
 {\small 
 \begin{multline*}
p_{A + tB}(x) = x^4 - \left(1 + 3t\right)x^3 + 
 \left( 3t  
 + 
%\left(
\sum_{2\times 2}|B|
% \begin{vmatrix}
% 0 & 1 \\
% 1 & 1
% \end{vmatrix} + 
% \begin{vmatrix}
% 0 & b \\
% b & 0
% \end{vmatrix} + 
% \begin{vmatrix}
% 2 & 1 \\
% 1 & 1
% \end{vmatrix} + 
% \begin{vmatrix}
% 2 & c \\
% c & 0
% \end{vmatrix}
% \right)
t^2
 \right)x^2 
 - 
 \left(
 (1 - c^2)t^2 + 
 \sum_{3\times 3}
 \begin{vmatrix}
 B
 \end{vmatrix}t^3
 \right)x\\
% + 
% \begin{vmatrix}
% 2 & 1 & c \\
% 1 & 1 & 0 \\
% c & 0 & 0
% \end{vmatrix}
-c^2
 t^3
% + 
% \begin{vmatrix}
% B
% \end{vmatrix}
+ (-b^2 - 2bc + c^2)
t^4.
 \end{multline*}}

If $c\neq 0$, then the Newton diagram associated with $p_{A + tB}(x)$ is the left of Figure $\ref{fig:newton_diagram_ex}$. Let $\Gamma_1$ be the edge with slope $1$ in the figure, which tells that $A + tB$ has three eigenvalues with leading degree $1$, counted with their multiplicities. 
In addition, the leading coefficients of the eigenvalues $\lambda_1,\lambda_2,\lambda_3$ with leading degree $1$ are the nonzero solutions $\sigma_1,\sigma_2,\sigma_3$ to 
$f_{\Gamma_1}(x) = -x^3 + 3 x^2 - (1 - c^2)x  - c^2 = 0$; i.e., $\lambda_i = \sigma_i\,t + O(t^2),\ i =1,2,3$. The remaining eigenvalue is $\lambda_4 = 1 + O(t)$.
Note that the Newton diagram remains unchanged for $c=\pm 1$, as the leading coefficient of the $x^0$ term in $p_{A + tB}(x)$ is $-c^2t^3$.
Furthermore, if $c = 0$ and $b = 0$, then all the coefficients of the $x^0$ term are zero, and the Newton diagram remains unchanged, as shown in the center of Figure $\ref{fig:newton_diagram_ex}$.

If $c=0$ and $b \neq 0$, then the leading degree of the $x^0$ term is $4$ and the Newton diagram is the right of Figure $\ref{fig:newton_diagram_ex}$.
Let $\Gamma_2,\ \Gamma_3$ be the edges with slope $1$ and $2$ in the figure, respectively.
Then the leading coefficients of the eigenvalues $\lambda_1,\lambda_2$ with leading degree $1$ are the nonzero solutions to 
$f_{\Gamma_2}(x) = -x^3 + 3x^2 - x = 0$,
while the leading coefficient of the eigenvalue $\lambda_3$ with leading degree $2$ is the nonzero solution to $f_{\Gamma_3}(x) = -x - b^2 = 0$. Specifically, these eigenvalues are $\lambda_1 = \frac{3 - \sqrt{5}}{2}\,t + O(t^2),\ \lambda_2 = \frac{3 + \sqrt{5}}{2}\,t + O(t^2),\ \lambda_3 = -b^2\,t^2 + O(t^3)$.
 %If $u\neq 0$, then the Newton diagram associated with $p_{A + tB}$ has only one edge and its slope is $1$. Thus the convergence rate of $U_k$ is linear. 

\begin{figure}[ht]
  \centering
  \begin{tikzpicture}[scale=0.7]
   \draw[dotted] (0,0) grid (5,5);

    \draw[->](0,0) -- (5,0) node[right]{$\gamma_2$};
%    \draw[](0,0) -- (0,2.6); 
    \draw[->](0,0) -- (0,5)node[above]{$\gamma_1$};

    \draw[very thick](0,0) -- (1,0);
    \draw[very thick](1,0)node[below]{$1$} -- (4,3);
    \draw[dashed](4,0)node[below]{$4$} -- (4,5);
      
    \fill (0,0) circle (4pt); 
    \draw (0,0) node[below] {$O$}; 

    \fill (1,0) circle (4pt); 

    \fill  (2,1) circle (4pt); 

    \fill [fill=black!10!white] (3,2) circle (4pt); 
    \draw [thick] (3,2) circle (4pt);

    \fill (4,3) circle (4pt); 

    \draw (2.6,1.1) node[right] {$\Gamma_1$}; 

  \end{tikzpicture}
\quad
  \begin{tikzpicture}[scale=0.7]
   \draw[dotted] (0,0) grid (5,5);

    \draw[->](0,0) -- (5,0) node[right]{$\gamma_2$};
%    \draw[](0,0) -- (0,2.6); 
    \draw[->](0,0) -- (0,5)node[above]{$\gamma_1$};

    \draw[very thick](0,0) -- (1,0);
    \draw[very thick](1,0)node[below]{$1$} -- (3,2);
    \draw[very thick](3,2) -- (4,3);

    \draw[dashed](4,0)node[below]{$4$} -- (4,5);
      
    \coordinate (O) at (0,0);
    \fill (O) circle (4pt); 
    \draw (0,0) node[below] {$O$}; 

    \coordinate (P1) at (1,0);
    \fill (P1) circle (4pt); 

    \coordinate (P1) at (2,1);
    \fill (P1) circle (4pt); 

    \coordinate (P1) at (3,2);
    \fill (P1) circle (4pt);

%    \coordinate (P1) at (4,3);
%    \fill (P1) circle (2pt); 

    \draw (2.6,1.1) node[right] {$\Gamma_1$}; 

  \end{tikzpicture}
\quad
  \begin{tikzpicture}[scale=0.7]
   \draw[dotted] (0,0) grid (5,5);

    \draw[->](0,0) -- (5,0) node[right]{$\gamma_2$};
    \draw[->](0,0) -- (0,5)node[above]{$\gamma_1$};

    \draw[very thick](0,0) -- (1,0)node[below]{$1$};
    \draw[very thick](1,0) -- (3,2);
    \draw[very thick](3,2) -- (4,4);

    \draw[dashed](4,0)node[below]{$4$} -- (4,5);
      
    \coordinate (O) at (0,0);
    \fill (O) circle (2pt); 
    \draw (0,0) node[below] {$O$}; 

    \coordinate (P1) at (1,0);
    \fill (P1) circle (4pt); 

    \coordinate (P1) at (2,1);
    \fill (P1) circle (4pt); 

    \coordinate (P1) at (3,2);
    \fill (P1) circle (4pt);

    \coordinate (P1) at (4,4);
    \fill (P1) circle (4pt); 

    \draw (2.6,1.1) node[below] {$\Gamma_2$}; 
    \draw (3.1,2.2) node[right] {$\Gamma_3$}; 

  \end{tikzpicture}
  \caption{The Newton diagram associated with the characteristic polynomial in Example $\ref{ex:diagram}$. The case of $c\neq0$ corresponds to the left figure.
The point at $(2,3)$ might be located above $\Gamma_1$.
The case $c=0$ with $b = 0$ corresponds to the center figure, $c=0$ with $b \neq 0$ corresponds to the right figure.}
  \label{fig:newton_diagram_ex}
\end{figure}

\end{example}

\subsection{The Eigenvalues of a Perturbed Symmetric Matrix}

By applying the expansion formulas of the characteristic polynomial 
in Section $\ref{section:charpoly}$ and the method of the Newton diagram to a perturbed symmetic matrix, we obtain the following theorem.

\begin{thm}
\label{thm:degree}
Let $A\in \S^n_+$ and $B\in \S^n$.
Then every eigenvalue of $A + tB$ is an analytic function of $t$ whose leading term has degree less than or equal to $2$.
\end{thm}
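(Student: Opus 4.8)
The plan is to reduce the statement to the structural results already proved about the characteristic polynomial and then read off the eigenvalue degrees from the Newton diagram. First I would put $A$ in a convenient form: since $A \in \S^n_+$, write $A = \diag(p_1,\dots,p_{n-m},0,\dots,0)$ after an orthogonal change of coordinates (which does not change eigenvalues of $A+tB$), where $p_1,\dots,p_{n-m}>0$ are the positive eigenvalues of $A$ and $m = n - \rank A$. Conjugating $B$ by the same orthogonal matrix keeps it symmetric, so we are exactly in the setting of Proposition~\ref{prop:expansion} and of Section~\ref{section:newton_polytope}. If $m = 0$ (i.e.\ $A \succ 0$) then $A+tB$ is invertible near $t=0$, all eigenvalues are analytic and nonvanishing at $t=0$, so their leading terms have degree $0$ and we are done; hence assume $m \ge 1$. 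Analyticity of the eigenvalues as functions of $t$ is the classical Rellich/Puiseux fact for a one-parameter analytic family of symmetric matrices (no fractional powers occur because the matrix is symmetric), so the only content is the degree bound.

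Next I would analyze the Newton diagram associated with $p_{A+tB}(x)$, constructed as in the paragraph preceding the theorem. The slopes of its edges are exactly the leading degrees of the eigenvalues, with multiplicity equal to the length of the $\gamma_2$-projection of each edge; so it suffices to show every edge of the Newton diagram has slope $\le 2$, equivalently that every edge of the Newton polytope of $p_{A+tB}$ (drawn in $(\gamma_1,\gamma_2) = (t\text{-degree},\,x\text{-degree})$ coordinates) on its lower-left boundary has the form described by $E_1, E_2, E_3$ in Figure~\ref{fig:newton_polygon}. This is precisely what Theorem~\ref{thm:generic} and Theorem~\ref{thm:degenerate} give: Theorem~\ref{thm:generic}(\ref{enum:3}) shows the coefficients strictly below $E_3$ vanish, so the relevant lower boundary of $\supp p_{A+tB}$ lies on the three edges $E_1$ (slope corresponding to degree $0$: the vertical edge from $(0,n)$ down to $(0,m)$), $E_2$ (direction $(1,-1)$, i.e.\ leading degree $1$), and $E_3$ (direction $(2,-1)$, i.e.\ leading degree $2$); and Theorem~\ref{thm:generic}(\ref{enum:0})--(\ref{enum:2}) identify nonzero coefficients at the relevant vertices $(0,n)$, $(0,m)$, $(r,m-r)$, and $(r + \mu, m-r-\mu)$ for the largest feasible $\mu$, pinning down these edges. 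Thus the maximal slope that can appear is $2$, coming from $E_3$, and the theorem follows.

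The one genuine subtlety — and what I expect to be the main obstacle — is the \emph{degenerate case}, where some coefficient $d_\gamma$ on the edge $E_3$ of Theorem~\ref{thm:generic}(\ref{enum:2}) vanishes, so that the actual Newton polytope is smaller than the generic one (Figure~\ref{fig:newton_polygon1}) and the Newton diagram's rightmost edge is the dashed extension. Here I would invoke Theorem~\ref{thm:degenerate}: if $d_{\tgamma} = 0$ for $\tgamma = (r,m-r) + \tmu(2,-1)$ with $\tmu$ minimal, then all $d_\gamma$ with $\mu \ge \tmu$ (and any $\nu \ge 0$ shift in the $\gamma_1$ direction) vanish, so the true support lies on or above the segment from $(r,m-r)$ to $(r+2\tmu, m-r-\tmu)$ and then continues horizontally; rotating and extending as in the Newton diagram construction, the extended final edge $E_3'$ still has slope at most $2$ (it is the segment through $(n-m+r,r)$ and $(n-m+r+\tmu, r+2\tmu)$, of slope $2$), and no steeper edge is created. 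The remaining routine check is bookkeeping: confirming that the vertices singled out in Theorem~\ref{thm:generic} are indeed the extreme points of the Newton polytope in the relevant quadrant (so that the lower-left hull really is $E_1 \cup E_2 \cup E_3$, not something with a larger slope hidden between listed vertices), which follows because consecutive listed lattice points differ by the primitive vectors $(0,-1)$, $(1,-1)$, $(2,-1)$ and these have non-increasing… more precisely non-decreasing slope as drawn, so convexity of the hull is automatic. Assembling these pieces gives the bound of $2$ on every eigenvalue's leading degree.
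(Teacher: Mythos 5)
Your proposal is correct and follows essentially the same route as the paper's proof: diagonalize $A$, read off the lower boundary of the Newton polytope of $p_{A+tB}$ from Theorems \ref{thm:generic} and \ref{thm:degenerate} (including the degenerate case where coefficients on $E_3$ vanish), and conclude that every slope of the Newton diagram is $0$, $1$, or $2$. The paper merely organizes the same argument by splitting $p_{A+tB}$ into three sub-sums $P_1+P_2+P_3$ corresponding to the edges $E_1,E_2,E_3$ and treating the full-rank case $r=m$ separately, but the substance is identical to yours.
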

\begin{proof}
It is well-known that every eigenvalue of $A + tB$ is an analytic function of $t$; see, e.g. \cite[Theorem $6.1$ in Chapter II]{K}.
The result obviously holds if $A$ has the full rank, $A = O$, or $B = O$.
Since the eigenvalues of $A + tB$ remain invariant under the transformation $Q\mapsto Q^T(A + tB)Q$ for an orthogonal matrix,
we may assume that $A = \diag(a_1,\ldots, a_{n-m},0,\ldots,0)$ for some $m\in \N$ with $m < n$ and $a_k>0\ (k\in [n-m])$.
In addition, we assume that $B$ is partitioned as in $(\ref{eq:partition0})$ and $B_{22} = CDC^T \in \S^{m}$ for some
 $r\in \N$, $C =
\begin{pmatrix}
 c_1 & \cdots & c_r
\end{pmatrix}
\in \R^{m\times r}$ with $\rank C = r$, $D = \diag(u_1,\ldots,u_r)$,
$u_k\in \R\setminus\{0\}\ (k\in [r])$. 
Note that $\rank B_{22} = r$.

In this proof, the Newton diagram associated with the characteristic polynomial $p_{A+tB}$ is simply called the Newton diagram.
We will show that every slope of the Newton diagram is less than or equal to $2$. 
By Proposition $\ref{prop:expansion}$, 
we can write $p_{A + tB}(x) = P_1(x) + P_2(x) + P_3(x)$, where
\begin{align*}
P_1(x) & = \sum_{i=m}^n \sum_{j=m-i}^{n-i}
 (-1)^{n-i}\langle \adj_{i+j}(A), C_{i+j}^j(B)\rangle t^jx^i,\\
P_2(x) & = \sum_{i=m-r}^{m-1} \sum_{j=m-i}^{n-i}
 (-1)^{n-i}\langle \adj_{i+j}(A), C_{i+j}^j(B)\rangle t^jx^i,\\
P_3(x) & =   \sum_{i=0}^{m-r-1} \sum_{j=m-i}^{n-i}
 (-1)^{n-i}\langle \adj_{i+j}(A), C_{i+j}^j(B)\rangle t^jx^i.
\end{align*}
Note that exponents of $(t,x)$ in $P_1, P_2$ and $P_3$ correspond to
points on the edges $E_1$, $E_2$ and $E_3$ in Figure $\ref{fig:newton_polygon}$.
Since $C_{i+j}^j(B)$ is the zero matrix for $j<0$, we see that
for $i=m,\ldots,n$,
\begin{align*}
&   \sum_{j=m-i}^{n-i}
 (-1)^{n-i}\langle \adj_{i+j}(A), C_{i+j}^j(B)\rangle t^jx^i\\
& = (-1)^{n-i}\langle \adj_{i}(A), C_{i}^0(B)\rangle x^i
+ \sum_{j=1}^{n-i}
 (-1)^{n-i}\langle \adj_{i+j}(A), C_{i+j}^j(B)\rangle t^jx^i.
%& = (-1)^{n-i}\tr \adj_{i}(A) x^i
%+ \sum_{j=1}^{n-i}
% (-1)^{n-i}\langle \adj_{i+j}(A), C_{i+j}^j(B)\rangle t^jx^i\\
\end{align*}
Here, $\langle \adj_{i}(A), C_{i}^0(B)\rangle = \tr\adj_{i}(A)>0$ and hence 
each coefficient of $x^m,x^{m+1},\ldots,x^n$, which appears in $P_1$, has a nonzero constant term.
Thus the Newton diagram has the vertices at $(0,0)$ and $(n-m,0)$.

If $r = m$, then $p_{A + tB}(x) = P_1(x) + P_2(x)$.
By (\ref{enum:1}) of Theorem $\ref{thm:generic}$, the leading term of $P_2(0)$ is 
\[
 (-1)^n\langle \adj_m(A), C_m(B)\rangle t^m= (-1)^n a_1\cdots a_{n-m}
\begin{vmatrix}
 B_{22}
\end{vmatrix}t^m,
\]
where $|B_{22}| \neq 0$.
Thus the Newton diagram has a vertex at $(n,m)$.
Since the slope of the line passing through $(n-m,0)$ and $(n,m)$ is $1$
and every eigenvalue of $A+tB$ is analytic, there is no vertex below the line.
Therefore, the vertices of the Newton diagram consist of $(0,0)$, $(n-m,0)$, $(n,m)$ and hence a slope of the Newton diagram is $0$ or $1$. %This means that the leading terms of eigenvalues of $A+tB$ are less than or equal to $1$.

Thus it is sufficient to consider that the case $r < m$. 
Now, $P_2$ has $x^{m-r}, x^{m-r+1},\ldots, x^{m-1}$ and their leading terms are
%Now $(\ref{enum:1})$ of Lemma $\ref{lemma:generic}$ gives that the leading terms of $x^{m-r}, \ldots, x^{m-1}$, which appear in $P_2$, are
\[
 (-1)^{n-m+\eta}\langle \adj_m(A), C_m^{\eta}(B)\rangle t^{\eta} = (-1)^{n-m+\eta}a_1\cdots a_{n-m}\cdot \sum_{\eta \times \eta}
\begin{vmatrix}
 B_{22}
\end{vmatrix}
t^\eta,
\quad(\eta = 1,\ldots,r),
\]
by (\ref{enum:1}) of Theorem $\ref{thm:generic}$.
Thus $(n-m+r,r)$ is 
a possible vertex of the Newton diagram.
Next, we consider $P_3$ which has $x^{m-r-\mu}$ for $\mu = 1, \ldots, \min\{m-r,n-m\}$.
%Next, we consider the leading terms of $x^{m-r-\mu}$ for $\mu = 1, \ldots, \min\{m-r,n-m\}$, which appear in $P_3$.
The leading term of $x^{m-r-\mu}$ corresponds to the point $\gamma = (m-r-\mu,r + 2\mu)$ in Figure $\ref{fig:newton_polygon}$,
 and (\ref{enum:2}) of Theorem $\ref{thm:generic}$ gives that
\begin{align}
& (-1)^{n-m+r+\mu} \langle\adj_{m+\mu}(A), C_{m+\mu}^{r+2\mu}(B)\rangle t^{r+2\mu} \label{eq:leading}\\
 = & (-1)^{n-m+r+2\mu}\cdot\sum_{\{k_1,\ldots,k_\mu \}\subset [n-m]}\left(\prod_{k\in \{k_1,\ldots,k_\mu\}^c}a_{k}\right)
  \cdot u_1 \cdots u_r \notag\\
&\phantom{(-1)^{n-m+r+2\mu}\cdot\sum_{\{k_1,\ldots,k_\mu \}\subset [n-m]}}
  \cdot\sum_{(r+\mu)\times(r+\mu)}
  \begin{vmatrix}
   b_{k_1} & \cdots & b_{k_\mu} & c_1 & \cdots & c_r
  \end{vmatrix}^2 t^{r+2\mu}.
\notag
\end{align}
Then Theorem $\ref{thm:degenerate}$ ensures that $(\ref{eq:leading})$ is $0$ if and only if 
$\rank
\begin{pmatrix}
 B_{12} & B_{22}
\end{pmatrix} < r + \mu.
$

If $\rank
\begin{pmatrix}
 B_{21} & B_{22}
\end{pmatrix} = r$, then Theorem $\ref{thm:degenerate}$ implies that all the coefficients of $x^0,x^1,\ldots,$ $x^{m-r-1}$ are zero. 
Thus the nonzero term of $p_{A + tB}$ with the lowest degree with respect to $x$ is $x^{m-r}$.
Since the line passing through $(n-m,0)$ and $(n-m+r,r)$ also passes through $(n,m)$,
 the vertices of the Newton diagram consist of $(0,0)$, $(n-m,0)$, $(n,m)$.
Therefore, a slope of the Newton diagram is $0$ or $1$.

If $\rank
\begin{pmatrix}
 B_{21} & B_{22}
\end{pmatrix} = r + \tmu$ for some $\tmu\in [\min\{m-r,n-m\}]$,
then Theorem $\ref{thm:degenerate}$ implies that
all the coefficients of 
$x^0, x^1,\ldots, x^{m-r-\tmu-1}$ are zero.
Here, we see that $x^{m-r-\tmu}$ corresponds to $(n-m+r+\tmu, r + 2\tmu)$ on the plane of the Newton diagram and this is the right most point corresponds to a nonzero term of $p_{A + tB}$. Thus the vertices of the Newton diagram
consist of $(0,0)$, $(n-m, 0)$, $(n - m + r, r)$, $(n, 2m-r)$.
Therefore, a slope of the Newton diagram is $0$, $1$, or $2$.
\end{proof}

By considering the case 
$\rank
\begin{pmatrix}
 B_{21} & B_{22}
\end{pmatrix} = r
$ in the proof, we obtain the following.
\begin{cor}
\label{cor:degree}
 For $m,n\in \N$ with $m<n$, and $a_k>0\ (k\in [n-m])$, let $A = \diag(a_1,\ldots, a_{n-m},0,\ldots,0)\in \S^n$ and $B\in \S^n$. 
Suppose that $B$ is partitioned as in $(\ref{eq:partition0})$.
If 
$\rank
\begin{pmatrix}
 B_{21} & B_{22}
\end{pmatrix} = 
\rank B_{22}$, then the leading coefficients of the eigenvalues $A + tB$ 
are less than or equal to $1$.
\end{cor}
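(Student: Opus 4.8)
The plan is to obtain this as a corollary of the proof of Theorem~\ref{thm:degree}, reading the conclusion off the Newton diagram. First I would dispose of the degenerate cases. If $B=O$ the eigenvalues of $A+tB$ are constant and the claim is immediate. If $B_{22}=O$ then $\rank B_{22}=0$, so the hypothesis forces $\begin{pmatrix}B_{21}&B_{22}\end{pmatrix}=O$; with respect to the splitting $\R^{n}=(\ker A)^{\perp}\oplus\ker A$ the matrix $A+tB$ is then block diagonal, one block acting invertibly at $t=0$ and the other vanishing identically, and in either block every eigenvalue has leading term of degree $0$. Otherwise, since eigenvalues are invariant under orthogonal conjugation and such a conjugation may be chosen to carry $(\ker A)^{\perp}$ and $\ker A$ onto the coordinate subspaces spanned by $e_{1},\dots,e_{n-m}$ and $e_{n-m+1},\dots,e_{n}$ respectively, it preserves the partition $(\ref{eq:partition0})$ together with the two ranks $\rank B_{22}$ and $\rank\begin{pmatrix}B_{21}&B_{22}\end{pmatrix}$. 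So I may assume $A=\diag(p_{1},\dots,p_{n-m},0,\dots,0)$ with $p_{k}>0$ and $B_{22}=CDC^{T}$ with $\rank C=r:=\rank B_{22}\ge 1$, which is exactly the normalized data used in the proof of Theorem~\ref{thm:degree}; the hypothesis now reads $\rank\begin{pmatrix}B_{21}&B_{22}\end{pmatrix}=r$.

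Next I would simply invoke the portion of the proof of Theorem~\ref{thm:degree} treating precisely the case $\rank\begin{pmatrix}B_{21}&B_{22}\end{pmatrix}=r$. There, Theorem~\ref{thm:degenerate} forces the coefficients of $x^{0},x^{1},\dots,x^{m-r-1}$ in $p_{A+tB}(x)$ to vanish, while the block $P_{1}$ makes the coefficient of $x^{i}$ have a nonzero constant term for $i=m,\dots,n$, and $(\ref{enum:1})$ of Theorem~\ref{thm:generic} makes the coefficient of $x^{m-\eta}$ ($\eta=1,\dots,r$) have lowest term a nonzero multiple of $t^{\eta}$; since the line through $(n-m,0)$ and $(n-m+r,r)$ also passes through $(n,m)$, the Newton diagram associated with $p_{A+tB}$ has vertices only at $(0,0)$, $(n-m,0)$ and $(n,m)$. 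Hence each of its edges has slope $0$ or $1$.

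Finally I would conclude with the correspondence recalled just before Theorem~\ref{thm:degree}: the slope of an edge $\Gamma$ of the Newton diagram equals the degree of the leading term of the batch of eigenvalues of $A+tB$ attached to $\Gamma$. As every slope is $0$ or $1$, every eigenvalue has leading term of degree at most $1$, which is the assertion. I do not anticipate a genuine obstacle here: all the substance is already contained in the proof of Theorem~\ref{thm:degree}, and the only points needing care are the bookkeeping for the two non-generic cases $B=O$ and $B_{22}=O$ (which sit outside the $r\ge 1$ normalization) and the routine check that the two rank conditions are unchanged by the orthogonal reduction to diagonal $A$.
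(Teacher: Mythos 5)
Your proposal is correct and follows essentially the same route as the paper: the paper derives the corollary directly from the case $\rank\begin{pmatrix} B_{21} & B_{22}\end{pmatrix} = \rank B_{22}$ in the proof of Theorem \ref{thm:degree}, where the Newton diagram is shown to have vertices only at $(0,0)$, $(n-m,0)$, $(n,m)$ and hence slopes $0$ or $1$. Your extra bookkeeping for $B=O$ and $B_{22}=O$ and the remark on rank invariance under the orthogonal reduction are harmless additions, not a different argument.
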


\section{Convergence Analysis of the Alternating Projection Method}
\label{section:ap}

\subsection{Eigenvalue formula}
For $A\in \S^n_+\setminus \{O\}$ and $B\in \S^n\setminus\{O\}$, let $E = \{A + tB: t\in \R \}$ and
$\phi(t) = A+tB$. We show the following proposition which connects the eigenvalues of $A + tB$ and the convergence rate of alternating projections.
A formula is obtained for a general affine space in \cite{OSW}. We include a proof for the case that the affine space is a line for completeness of the paper. %check
\begin{prop}
\label{prop:formula_eigen}
Let $\phi(t) = A+tB$, $T = \phi^{-1}\circ P_E\circ P_{\S^n_+} \circ \phi(t)$ and $\lambda_1(t), \ldots, \lambda_n(t)$ be the eigenvalues of $\phi(t)$. 
Suppose $E\cap \S^n_+ = \{A\}$.
Then we have
 \[
 T = t - \frac{1}{\|B\|^2}
 \sum_{i \in n(t)}\frac{d}{d t}\frac{1}{2}\lambda_i^2(t),
 \]
where $n(t) = \{i \in [n]:\lambda_i(t) < 0\}$ and $\|B\|$ is the Frobenius norm of $B$.
\end{prop}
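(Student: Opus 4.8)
The plan is to compute the two projections $P_{\S^n_+}$ and $P_E$ explicitly and then differentiate. First I would recall the standard formula for the projection onto the positive semidefinite cone: if $\phi(t) = A+tB$ has spectral decomposition $\phi(t) = \sum_i \lambda_i(t) v_i(t)v_i(t)^T$ with orthonormal eigenvectors $v_i(t)$, then $P_{\S^n_+}(\phi(t)) = \sum_i \max\{\lambda_i(t),0\}\, v_i(t)v_i(t)^T$, so that the ``residual'' is
\[
 \phi(t) - P_{\S^n_+}(\phi(t)) = \sum_{i\in n(t)} \lambda_i(t)\, v_i(t)v_i(t)^T,
 \qquad \bigl\| \phi(t) - P_{\S^n_+}(\phi(t))\bigr\|^2 = \sum_{i\in n(t)}\lambda_i^2(t).
\]
Next, since $E = \{A + sB : s\in\R\}$ is an affine line with direction $B$, the projection of any $X\in\S^n$ onto $E$ is $P_E(X) = A + \tfrac{\langle X - A, B\rangle}{\|B\|^2}B$. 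Applying this with $X = P_{\S^n_+}(\phi(t))$ and using $\phi^{-1}(A+sB) = s$, we get
\[
 T \;=\; \frac{\langle P_{\S^n_+}(\phi(t)) - A,\; B\rangle}{\|B\|^2}.
\]

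Now I would rewrite the numerator by splitting $P_{\S^n_+}(\phi(t)) - A = (\phi(t) - A) - (\phi(t) - P_{\S^n_+}(\phi(t))) = tB - R(t)$, where $R(t) := \phi(t) - P_{\S^n_+}(\phi(t)) = \sum_{i\in n(t)}\lambda_i(t)v_i(t)v_i(t)^T$. Hence
\[
 T \;=\; t \;-\; \frac{\langle R(t), B\rangle}{\|B\|^2}.
\]
It remains to identify $\langle R(t),B\rangle$ with $\frac{d}{dt}\sum_{i\in n(t)}\tfrac12\lambda_i^2(t)$. Since the eigenvalues are analytic in $t$ (the hypothesis $A\in\S^n_+$, $E\cap\S^n_+=\{A\}$, together with Theorem~\ref{thm:degree}, or simply \cite[Ch.~II]{K}, gives this), we have $\frac{d}{dt}\sum_{i\in n(t)}\tfrac12\lambda_i^2(t) = \sum_{i\in n(t)}\lambda_i(t)\lambda_i'(t)$ at any $t$ where the index set $n(t)$ is locally constant — and the set of $t$ where some $\lambda_i(t)=0$ is discrete, so this holds off a discrete set and extends by continuity. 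Finally, by the standard perturbation formula $\lambda_i'(t) = \langle B, v_i(t)v_i(t)^T\rangle = v_i(t)^T B\, v_i(t)$, so
\[
 \sum_{i\in n(t)}\lambda_i(t)\lambda_i'(t)
 = \Bigl\langle \sum_{i\in n(t)}\lambda_i(t)v_i(t)v_i(t)^T,\; B\Bigr\rangle
 = \langle R(t), B\rangle,
\]
which completes the proof.

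I expect the main obstacle to be the bookkeeping around the non-smooth index set $n(t)$: the function $t\mapsto P_{\S^n_+}(\phi(t))$ is only piecewise analytic, with kinks where an eigenvalue crosses zero, and one must argue that the formula $\frac{d}{dt}\sum_{i\in n(t)}\tfrac12\lambda_i^2(t) = \sum_{i\in n(t)}\lambda_i\lambda_i'$ is valid everywhere (e.g.\ that both one-sided derivatives agree, because the terms being added or dropped from $n(t)$ have $\lambda_i(t)=0$ there, so they contribute $\lambda_i\lambda_i' = 0$ to the derivative). A secondary point is justifying the eigenvector-based derivative formula $\lambda_i' = v_i^T B v_i$ when eigenvalues are repeated; this is handled by the analytic (Rellich) choice of eigenprojections, so one should phrase the perturbation step in terms of eigenprojections rather than individual eigenvectors to be safe. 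Everything else is a routine unwinding of the two projection formulas.
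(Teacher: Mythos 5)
Your proposal is correct and follows essentially the same route as the paper's proof: spectral decomposition of $\phi(t)$ to write $P_{\S^n_+}(\phi(t)) = \phi(t) - \sum_{i\in n(t)}\lambda_i(t)v_i(t)v_i(t)^T$, the explicit affine-line projection formula, and the identification $\lambda_i'(t) = v_i(t)^T B v_i(t)$ via analytic (Rellich--Kato) perturbation theory. Your additional remarks on the piecewise behaviour of the index set $n(t)$ and on repeated eigenvalues address points the paper passes over silently, but they do not change the argument.
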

\begin{proof} %check
Let $V(t) = P_{\S^n_+}(\phi(t))$ and $v_i(t)$ be the orthonormal eigenvectors of 
$\phi(t)$ associated with $\lambda_i(t)$ for $i\in [n]$. Then 
$
 V(t) = \phi(t) - \sum_{i \in n(t)}\lambda_i(t)v_i(t)v_i(t)^T
$. 
 Since $P_E$ is the orthogonal projection onto $E$, we can easily show that
$
 P_E(V(t)) = U_* + \frac{\langle B, V(t) - U_*\rangle}{\|B\|^2}B
$. 
Thus 
\begin{align*}
  \phi(T) & = P_E(V(t)) = U_* + \frac{\langle B, \phi(t)\rangle}{\|B\|^2}B -
 \sum_{i\in n(t)}\lambda_i(t)\frac{\langle B, v_i(t)v_i(t)^T\rangle}{\|B\|^2}B
 - \frac{\langle B, U_*\rangle}{\|B\|^2}B\\
& = P_E(\phi(t)) - \sum_{i\in n(t)}\lambda_i(t)\frac{\langle B, v_i(t)v_i(t)^T\rangle}{\|B\|^2}B
 = \phi(t) - \sum_{i\in n(t)}\lambda_i(t)\frac{v_i(t)^T B v_i(t)}{\|B\|^2}B,
\end{align*}
and hence 
$
\displaystyle T = t - \sum_{i\in n(t)}\lambda_i(t)\frac{v_i(t)^T B v_i(t)}{\|B\|^2}
$. 
Since $A,B$ are symmetric, we see that $\lambda_i(t)$, $v_i(t)$ are analytic by \cite[Theorem 6.1 and Section 6.2 in Chapter II]{K} 
and $\frac{d}{dt}\lambda_i(t) = v_i(t)^TB v_i(t)$. This completes the proof.
\end{proof}

\subsection{Proof of Theorem $\ref{thm:convergence0}$}
\label{section:proof-conv}

Let $Q$ be an orthogonal matrix such that $Q^TAQ$ is equal to a diagonal matrix $\Lambda$.
Let $\tE = \{\Lambda + tQ^TBQ: t\in \R\}$.
Since the Frobenius norm remains invariant under the transformation 
$U\mapsto Q^TUQ$, we have
\begin{align*}
 \min_{U\in E} \|U - QVQ^T\| & = \min_{U\in E} \|Q^TUQ - V\| \\
&  = \min_{\tU\in \tE} \|\tU - V\| = \|P_{\tE}(V) - V\| 
= \|QP_{\tE}(V)Q^T - QVQ^T\|,
\end{align*}
and hence $P_{E}(QVQ^T) = QP_{\tE}(V)Q^T$.
Similarly, we have $P_{\S^n_+}(QUQ^T) = QP_{\S^n_+}(U)Q^T$.
Then, for $t_0\in \R$, there exists $t_1\in \R$ such that 
\[
 A + t_1 B = P_E\circ P_{\S^n_+}(A + t_0 B) = Q[P_{\tE} \circ P_{\S^n_+}(Q^T(A + t_0B) Q)]Q^T.
\]
Thus $\Lambda + t_1 Q^TBQ = P_{\tE} \circ P_{\S^n_+}(\Lambda + t_0 Q^TBQ)$.
Therefore, in the convergence analysis, we may assume that 
$A = \diag(a_1,\ldots,a_{n-m},0,\ldots,0) \in \R^{n\times n}$
for some $m\in \N$ with $m < n$ and $a_k>0\ (k\in [n-m])$. 
%Let $B$ be partitioned as $(\ref{eq:partition0})$. 
%Then we show the result on the convergence analysis of alternating projections. 
Then we obtain Theorem $\ref{thm:convergence0}$ as a direct application of Theorem $\ref{thm:convergence}$ below.
%the convergence rate of alternating projections for $E$ and $\S^n_+$is $O(k^{-\frac{1}{2}})$. 
In the proof Theorem $\ref{thm:convergence}$, we use the following technical lemma, which is a slight modification of \cite[Lemma $3.1$]{OSW2023}.%, which ensures a sublinear convergence of a sequence constructed by a recurrence equation.
\begin{lemma}
\label{lemma:cesaro}
Suppose that the sequence $\{x_k\}$ satisfies
\[
 x_{k+1} = x_k - \rho x_k^d + O(x_k^{d+1})\ (k = 0,1,\ldots)
\]
for some $\rho>0$, an odd positive integer $d$ that is greater than $1$. 
Then there exists $\delta>0$ such that if $0<|x_0|<\delta$, then $x_k$ has
the same sign as $x_0$ for $k=1,2,\ldots$
and $\{x_k\}$ converges to $0$ with $x_k = \Theta(k^{-\frac{1}{d-1}})$.
More precisely, we have
\[
 \lim_{k\to \infty}((d-1)\rho)^{\frac{1}{d-1}}k^{\frac{1}{d-1}}x_k = 1.
\]
%
%For $x_0$ sufficiently close to $0$, if $x_0>0$, then $x_k>0$, and if $x_0<0$, then $x_k<0$ for $k= 0, 1, \ldots$. In addition,  $\{x_k\}$ converges to $0$ with $x_k = \Theta(k^{-\frac{1}{d-1}})$.
\end{lemma}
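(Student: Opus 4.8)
The plan is to write the recursion as $x_{k+1}=f(x_k)$ with $f(x)=x-ax^{d}+e(x)$, where by hypothesis there are constants $C>0$ and $\delta_{0}>0$ with $|e(x)|\le C|x|^{d+1}$ for $|x|\le\delta_{0}$, and then to proceed in three stages: a reduction to $x_{0}>0$, monotone convergence of $\{x_k\}$ to $0$, and extraction of the rate via the reciprocal power $x_k^{-(d-1)}$. Since $d$ is odd, $(-x)^{d}=-x^{d}$, so replacing $x_k$ by $-x_k$ produces a recursion of exactly the same shape with the same $a>0$; hence it suffices to treat $x_{0}>0$, and the sign assertion then reduces to showing that a small interval $(0,\delta)$ is invariant.

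\textbf{Convergence to $0$.} I would fix $\delta\le\delta_{0}$ so small that, for $x\in(0,\delta]$, both $ax^{d-1}>Cx^{d}$ and $1-ax^{d-1}-Cx^{d}>0$ hold (possible because $d-1$ is even, so $x^{d-1}>0$ on $(0,\delta]$) and moreover $\delta<a/C$. Writing $f(x)=x(1-ax^{d-1})+e(x)$ then gives $0<f(x)<x$ on $(0,\delta]$, so $f$ maps $(0,\delta]$ into itself. Consequently, if $0<x_{0}<\delta$ then $\{x_k\}$ stays in $(0,\delta)$ — which proves the sign claim — and is strictly decreasing, hence converges to some $L\ge 0$. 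Passing to the limit in $x_{k+1}=f(x_k)$ yields $aL^{d}=e(L)$, so $aL^{d}\le CL^{d+1}$; since $\delta<a/C$ this forces $L=0$.

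\textbf{Rate.} Set $y_k=x_k^{-(d-1)}>0$ and $u_k=ax_k^{d-1}-e(x_k)/x_k=ax_k^{d-1}+O(x_k^{d})$, so that $x_{k+1}=x_k(1-u_k)$. Using $(1-u)^{-(d-1)}=1+(d-1)u+O(u^{2})$ for small $u$, together with $y_k u_k=a+O(x_k)$ and $y_k u_k^{2}=O(x_k^{\,d-1})=O(x_k)$, I obtain
\[
 y_{k+1}=y_k(1-u_k)^{-(d-1)}=y_k+(d-1)a+O(x_k).
\]
Summing from $0$ to $k-1$, dividing by $k$, and using that $x_k\to 0$ forces its Ces\`aro averages to tend to $0$, gives $y_k/k\to(d-1)a>0$; hence $c_{1}k\le y_k\le c_{2}k$ for all large $k$ with constants $0<c_{1}<c_{2}$, so that $x_k=y_k^{-1/(d-1)}=\Theta(k^{-1/(d-1)})$ after absorbing the finitely many initial terms into the constants. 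The delicate point is this last expansion: one must check that every remainder, once multiplied by $y_k=x_k^{-(d-1)}$, genuinely collapses to a uniform $O(x_k)$ on the invariant interval, and it is precisely $x_k\to 0$ that then makes the Ces\`aro/Stolz passage legitimate and pins down the constant $(d-1)a$.
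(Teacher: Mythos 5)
Your proof is correct and follows essentially the same route as the paper: establish that a small one-sided interval is invariant and the sequence decreases monotonically to $0$ (the sign case handled via oddness of $d$), then extract the rate through the reciprocal power $x_k^{-(d-1)}$ and a Ces\`aro/Stolz passage. The only difference is that the paper delegates the rate computation to the cited Lemma 3.1 of \cite{OSW2023}, whereas you carry it out explicitly (and correctly, including the check that the $O(u_k^2)$ remainder collapses to $O(x_k)$ after multiplication by $y_k$).
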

\begin{proof}
For arbitrary $\epsilon>0$, there exists $\delta$ with $0< \delta < \rho$ such that
$0< x_k < \delta$ implies 
$0 < x_k + (- \rho - \epsilon)x_k^d < x_{k+1} < x_k + (- \rho + \epsilon) x_k^d < \delta$.
By induction, we have $x_k>0$ for $k=1,2,\ldots$.
Suppose $\alpha := \inf_k x_k >0$. 
Then $\alpha \leq x_{k+1} < x_k + (- \rho + \epsilon) \alpha^d$ and hence $\alpha < \alpha + (\rho - \epsilon)\alpha^d < x_k$.
This is a contradiction. Thus $\inf_k x_k = 0$. Since $\{x_k\}$ is a decreasing sequence, we obtain $x_k\to 0$. Then almost identical arguments in the proof of  \cite[Lemma $3.1$]{OSW2023} ensures that $\lim_{k\to \infty}((d-1) \rho)^{\frac{1}{d-1}}k^{\frac{1}{d-1}}x_k = 1$. %$$x_k = \Theta(k^{-\frac{1}{d-1}})$. 
In case that $x_1<0$, we put $x_k = -y_k$. Then we have $y_{k+1} = y_k - \rho y_k^d + O(y_k^{d+1})$ and $y_k>0$. Applying the argument above to $y_k$, we obtain the result.
\end{proof}

\begin{thm}
\label{thm:convergence}
Let $A = \diag(a_1,\ldots,a_{n-m},0,\ldots,0) \in \R^{n\times n}$
for some $m\in \N$ with $m < n$ and $a_k>0\ (k\in [n-m])$ and $B$ be partitioned as $(\ref{eq:partition0})$. 
Suppose $\S^n_+ \cap E = \{A\}$. Then
$\|U_{k} - A\| = O(k^{-\frac{1}{2}})$ for $U_{k+1} = P_E\circ P_{\S^n_+}(U_k)$ with $U_0\in E$. Moreover, if 
$\rank 
\begin{pmatrix}
 B_{21} & B_{22}
\end{pmatrix}
= \rank B_{22}
$,
then $\|U_{k} - A\|$ converges to $0$ in a linear rate.
\end{thm}
\begin{proof} 
%[Proof of Theorem $\ref{thm:convergence}$]
Let $t_k = \phi^{-1}(U_k)$. Then $t_{k+1} = \phi^{-1}\circ P_E\circ P_{\S^n_+} \circ \phi(t_k)$.
By Theorem $\ref{thm:degree}$, the leading term of every eigenvalue of $A + tB$ has degree less than or equal to $2$. 
Then Proposition $\ref{prop:formula_eigen}$ gives that $t_{k+1} = t_k - \frac{1}{\|B\|^2}\sum_{i \in n(t_k)}\frac{d}{d t}\frac{1}{2}\lambda_i^2(t_k)= t_k - \rho t_k^d + O(t_k^{d+1})$ for some $\rho > 0$, $d\in \N$ with $d = 1$, or $3$.
If $d = 3$, then Lemma $\ref{lemma:cesaro}$ implies that 
$\lim_{k\to \infty}(2\rho)^{\frac{1}{2}}k^{\frac{1}{2}}x_k = 1$
%$|t_k| = \Theta(k^{-\frac{1}{2}})$ 
and hence $\|U_k - A\|= \Theta(k^{-\frac{1}{2}})$.

Suppose $d = 1$ and estimate the value of $\rho$. 
Then only the eigenvalues of $A+tB$ with leading degree $1$ contribute to $\rho$.
By the Newton diagram method explained in the first paragraph of Section $\ref{section:newton_diagram}$ and (\ref{enum:1}) of Theorem $\ref{thm:generic}$, 
the leading coefficients of such eigenvalues are the nonzero solutions
to the equation 
\[
(-1)^{n-m}a_1\cdots a_{n-m}\left(x^m - \sum_{1 \times 1}|B_{22}|x^{m-1} + \sum_{2\times 2}|B_{22}|x^{m-2} + \cdots + (-1)^{m}\det B_{22}\right) = 0.
\] 
This is a constant multiple of the characteristic polynomial of $B_{22}$, and hence the leading coefficients are nonzero eigenvalues $\sigma_1,\ldots,\sigma_s$ of $B_{22}$. Thus the eigenvalues of $A+tB$ with leading degree $1$ can be written by $\lambda_i(t) = \sigma_i t + O(t^2)$ for $i = 1,\ldots s$.
We consider the case $t_k>0$. The other case is shown similarly.
Since the eigenvalues $\lambda_i(t)$ of $A + tB$ 
are analytic, there exists $I\subset [m]$ such that $n(t) = I$ for sufficiently small $t>0$.
Then we have 
\[
 \rho = \frac{1}{\|B\|^2}\sum_{i\in I}\left.\frac{d}{dt}\right|_{t=0}\frac{1}{2}\lambda_i^2(t) \leq \frac{\sum_{i=1}^s\sigma_i^2}{\|B\|^2} \leq \frac{\|B_{22}\|^2}{\|B\|^2}.
\]
If $\|B\|> \|B_{22}\|$, then $\rho < 1$.
%then $a = \frac{1}{\|B\|^2}\frac{d}{dt}|_{t=0}\sum_{i\in n(t)}\frac{1}{2}\lambda_i^2(t)\leq \frac{\sum_{i=1}^m\sigma_i^2}{\|B\|^2} = \frac{\|B_{22}\|^2}{\|B\|^2}< 1$.
If $\|B\| = \|B_{22}\|$, then $B_{22}$ is indefinite since $\S^n_+\cap E = \{A\}$. Thus $\rho < \frac{\|B_{22}\|^2}{\|B\|^2} = 1$.
%By \cite[Theorem $3.3$]{BB1993}, we have $|t_{k+1}| \leq |t_k|$ for any $t_0\in \R$.
%Since $t_{k+1} = (1 - a) t_k + O(t_k^2)$, we obtain $0 < a < 2$. 
Therefore $t_{k+1} = (1 - \rho) t_k + O(t_k^2)$ and hence
$\{t_k\}$ converges linearly.
In the case that 
$\rank\begin{pmatrix}
 B_{21} & B_{22}
\end{pmatrix} = 
\rank B_{22}$, 
Corollary $\ref{cor:degree}$ ensures $d = 1$.
\end{proof}

\begin{example}
\label{ex:convergence}
Consider the matrices $A,B$ in Example $\ref{ex:diagram}$
and $\{U_k\}$ constructed by the alternating projections 
$U_{k+1} = P_{\S^4_+}\circ P_E(U_k)$,
where $E = \{A + tB: t\in \R\}$.
% {\scriptsize
% \[
% A = 
% \begin{pmatrix}
% 1 & 0 & 0 & 0 \\
% 0 & 0 & 0 & 0 \\
%  0 & 0 & 0 & 0 \\
% 0 & 0 & 0 & 0 
% \end{pmatrix},\quad
% B = 
% \begin{pmatrix}
% b_1 & b_2 & b_3 & b_4 \\
% b_2 & 1 & 1 & 1 \\
% b_3 & 1 & u & 0 \\
% b_4 & 1 & 0 & 0
% \end{pmatrix}.
% \]}%
% Let $B$ be partitioned as in $(\ref{eq:partition})$.
Now
 $\rank
 \begin{pmatrix}
 B_{21} & B_{22}
 \end{pmatrix} = 
 \rank B_{22}$ if and only if 
 $b = 0$.
 In this case, Theorem $\ref{thm:convergence}$ implies that the convergence rate of $U_k$ is linear.
 In the other case, the convergence rate is $O(k^{-\frac{1}{2}})$.
 %We remark that Theorem $\ref{thm:convergence}$ gives only upper bounds and hence the actual convergence rates can be faster.
 We can verify these convergence rates directly by examining the Newton diagram
 presented in Example $\ref{ex:diagram}$. We will further investigate this example with numerical experiments in Example $\ref{ex:rate}$.
\end{example}
\begin{example}
\label{ex:linear}
 The convergence rate given in Theorem $\ref{thm:convergence}$ is only an upper bound. Let $A$ be the same matrix in Example $\ref{ex:diagram}$ and 
 {\scriptsize
 \[
% A = 
% \begin{pmatrix}
% 1 & 0 & 0 & 0 \\
% 0 & 0 & 0 & 0 \\
%  0 & 0 & 0 & 0 \\
% 0 & 0 & 0 & 0 
% \end{pmatrix},\quad
 B = 
 \begin{pmatrix}
 0 & 0 & 0 & 1 \\
 0 & 1 & 0 & 0 \\
 0 & 0 & -2 & 0 \\
 1 & 0 & 0 & 0
 \end{pmatrix}.
 \]}%
Then 
$\rank
\begin{pmatrix}
 B_{21} & B_{22}
\end{pmatrix} > \rank B_{22}$. 
%the characteristic polynomial of $A + tB$ is
%\[ x^4 - (1 - t)x^3 + (-t -3t^2)x^2 - (-2t^2 + t^3)x + 2t^4.\]
However, the eigenvalues of $A + tB$ are $t, -2t, \frac{1 \pm \sqrt{1 + 4t^2}}{2}$.
Thus Proposition $\ref{prop:formula_eigen}$ implies that the convergence rate is linear.
\end{example}

\subsection{Singularity degree}
We consider sufficient conditions for 
the upper bound given in Theorem $\ref{thm:convergence}$ to be tight.
Here, we use the notion of the singularity degree of $E \cap \S^n_+$; see, e.g. \cite{BW,DW,SW}.
We give a brief explanation for the singularity degree.
%Consider $E\cap \S^n_+$, where
Consider a general affine subspace 
\[
%(\cP) \text{ minimize } \langle C_0, X\rangle \text{ s.t. }
E' = \{X\in \S^n: \langle C_k, X \rangle = b_k\ (k\in [m])\},
\]
where $C_k\in \S^n\ (k\in [m]),\ b=(b_1,\ldots,b_m)^T\in \R^m$.
The face $F_{\min}$ of the convex set $\S^n_+$ is called the \textit{minimal face} of $E'\cap \S^n_+$ if $F_{\min}$ is the intersection of all the faces of $\S^n_+$ containing $E'\cap \S^n_+$.
%We call $F_{\min}$ the minimal face of $E\cap S^n_+$ if $F_{\min}$ is the face of $\S_+^n$ as a convex set that is minimal among all the faces of $\S_+^n$ containing $E\cap \S^n_+$.
%Let $\feas(\cP)$ be the feasible region of $(\cP)$. 
Suppose that $E'$ intersects with $\S^n_+$ nontransversely; i.e.,
$E'\cap S^n_+\neq \emptyset$ and $E'\cap \intr\S^n_{+}= \emptyset$.
%Then there exists the minimal face $F_{\min}$ of $\S^n_+$ with $E'\cap \S^n_+ \subset F_{\min}$.
%Suppose that $\feas(\cP)$ is and the interior of $\feas(\cP)$ is empty. 
%%%%%%%%%%%%%%%%%%%%%%%%%%%%%%%%%%%%%%%%%%%%%%%%%%%%%%%%%%%%
We can find $F_{\min}$ by the following procedure called the \textit{facial reduction}:
\begin{enumerate}
 \item Set $F_0 = \S^n_+,\ i = 1$;
 \item Find $y\in \R^m,\ U^i \in \S^n_+,\ V \in F_{i-1}^\perp$ such that
\[
    b^Ty = 0,\quad
 \sum_{k=1}^m y_k C_k = U^i + V%,\quad U^i + V 
\notin F_{i-1}^\perp;
\]
\item Set $F_{i+1} = F_i \cap \{U^i\}^\perp$;
\item If $F_{i+1}=F_{\min}$, then output $F_{i+1}$ as $F_{\min}$.
Otherwise, set $i := i+1$ and repeat (ii), (iii), (iv). 
%If $F_{i+1} \neq F_{\min}$, then set $i := i+1$ and repeat (ii), (iii), (iv). 

\end{enumerate}
%\begin{enumerate}
% \item Set $F = \S^n_+,\ i = 1$.
% \item Find $y^i\in \R^m,\ U^i \in \S^n_+,\ V^i \in F_{i-1}^\perp$ such that
%\[
%    b^Ty^i = 0,\quad
% \sum y_k^i C_k = U^i + V^i,\quad U^i + V^i \notin F_{i-1}^\perp.
%\]
%%where $U^i$ has the maximum rank among $U^i$ satisfying the system above.
%\item Set $F_{i+1} = F_i \cap \{U^i\}^\perp$
%\item If $F_{i+1} \neq F_{\min}$, then set $i := i+1$ and repeat $(ii),\ (iii)$.
%\end{enumerate}
It is well-known that only finitely many iterations are necessary to obtain $F_{\min}$.
Thus the iterative process can be expressed as
    \begin{align*}
     \S_+^n &= F_0 \overset{U^1}{\longrightarrow} F_1
   \overset{U^2}{\longrightarrow} F_2
   \overset{U^3}{\longrightarrow}
   \cdots
   \overset{U^s}{\longrightarrow} F_s=F_{\min}.
 \end{align*}
The minimum length $s$ of such sequences is called the \textit{singularity degree} of $E'\cap \S^n_+$.
%%%%%%%%%%%%%%%%%%%%%%%%%%%%%%%%%%%%%%%%%%%%%%%%%%%%%%%%%%%%

First, we explain the relation between the singularity degree and $B_{22}$ by the following lemma.
\begin{lemma}
\label{lemma:sd2}
Suppose that 
$A = \diag(a_1,\ldots,a_{n-m},0,\ldots,0),\ a_k >0\ (k\in [n-m])$,
$B$ is partitioned as $(\ref{eq:partition0})$
 and $E\cap \S^n_+ = \{A\}$, where
$E = \{A + t B:t\in \R\}$.
\begin{enumerate}
 \item\label{lemma:enum1} If the singularity degree of $E\cap \S^n_+$ is greater than $1$, then $B_{22}$ is a nonzero positive or negative semidefinite matrix with $\det B_{22} = 0$.
\item\label{lemma:enum2} If $B_{22}$ is a nonzero positive or negative semidefinite matrix and 
$\rank
\begin{pmatrix}
 B_{21} & B_{22}
\end{pmatrix} = \rank B_{22}$, 
then $E\cap \S^n_+$ is not a singleton.
\end{enumerate} 
\end{lemma}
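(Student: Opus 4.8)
The overall plan is, after diagonalising $A$, to translate both statements into concrete conditions on the block $B_{22}$ via the Schur complement. We may assume $A=\diag(p_1,\dots,p_{n-m},0,\dots,0)$ with $p_k>0$ and $B$ partitioned as in $(\ref{eq:partition0})$. Since $m\ge 1$, the matrix $A$ lies on the boundary of $\S^n_+$, so $E\cap\intr\S^n_+=\emptyset$ while $E\cap\S^n_+=\{A\}\ne\emptyset$; hence $E$ meets $\S^n_+$ nontransversally and the facial reduction applies. As $A$ has range $\Span\{e_1,\dots,e_{n-m}\}$, the minimal face of $\S^n_+$ containing $A$ is $F_{\min}=\{X\succeq 0:X_{22}=O\}$. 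Because $E=\{A+tB:t\in\R\}$ is a line through $A$, a first facial reduction certificate is, up to scaling, precisely a matrix $W\in\S^n_+\setminus\{O\}$ with $\langle W,A\rangle=0$ and $\langle W,B\rangle=0$.

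For part (\ref{lemma:enum1}) I would first unwind such a certificate: from $W,A\succeq 0$ and $\tr(WA)=0$ one gets $WA=O$, hence $W=\left(\begin{smallmatrix}O&O\\ O&W_{22}\end{smallmatrix}\right)$ with $W_{22}\in\S^m_+\setminus\{O\}$, and $\langle W,B\rangle=\langle W_{22},B_{22}\rangle$. Using $X\succeq 0\Rightarrow X_{22}\succeq 0$ one checks that $\S^n_+\cap W^\perp$ consists of those $X\succeq 0$ for which the range of $X_{22}$ lies in $\ker W_{22}$, and this set equals $F_{\min}$ if and only if $W_{22}\succ 0$ (if $0\ne v\in\ker W_{22}$ then $\left(\begin{smallmatrix}O&O\\ O&vv^T\end{smallmatrix}\right)\in\S^n_+\cap W^\perp\setminus F_{\min}$). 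Hence the singularity degree equals $1$ exactly when some positive definite $W_{22}\in\S^m$ has $\langle W_{22},B_{22}\rangle=0$, i.e.\ when the hyperplane $\{W:\langle W,B_{22}\rangle=0\}$ meets the open cone $\S^m_{++}$; by self-duality of $\S^m_+$ this fails precisely when $B_{22}$ is a nonzero positive or negative semidefinite matrix. Thus singularity degree $>1$ forces $B_{22}$ to be such a matrix. To conclude $\det B_{22}=0$: if $B_{22}\succ 0$, then for small $t>0$ the block $\diag(p_1,\dots,p_{n-m})+tB_{11}$ is positive definite and the Schur complement of $A+tB$ with respect to it equals $t\bigl(B_{22}-tB_{21}(\diag(p_1,\dots,p_{n-m})+tB_{11})^{-1}B_{21}^T\bigr)$, which is positive definite for $t$ small; hence $A+tB\succeq 0$ for small $t>0$, contradicting $E\cap\S^n_+=\{A\}$. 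The case $B_{22}\prec 0$ is symmetric using $t<0$, so $\det B_{22}=0$.

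For part (\ref{lemma:enum2}) I would produce a nondegenerate sub-segment of $E$ lying in $\S^n_+$. Replacing $B$ by $-B$ if necessary (which changes neither $E$ nor the rank hypothesis), assume $B_{22}\succeq 0$, $B_{22}\ne O$. The hypothesis $\rank(B_{21}\ B_{22})=\rank B_{22}$ means the columns of $B_{21}$ lie in the range of $B_{22}$, equivalently $\ker B_{22}\subseteq\ker B_{21}^T$. For small $t>0$ the Schur complement criterion gives $A+tB\succeq 0$ if and only if $R(t)\succeq 0$, where $R(t)=B_{22}-tB_{21}(\diag(p_1,\dots,p_{n-m})+tB_{11})^{-1}B_{21}^T$. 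Decompose $v\in\R^m$ as $v=v_1+v_2$ with $v_1$ in the range of $B_{22}$ and $v_2\in\ker B_{22}$: then $v^TB_{22}v=v_1^TB_{22}v_1\ge\sigma\|v_1\|^2$ where $\sigma>0$ is the least positive eigenvalue of $B_{22}$, and $B_{21}^Tv_2=0$ (since $\ker B_{22}\subseteq\ker B_{21}^T$), so $B_{21}^Tv=B_{21}^Tv_1$ and $v^TB_{21}(\diag(p_1,\dots,p_{n-m})+tB_{11})^{-1}B_{21}^Tv\le C\|v_1\|^2$ with $C$ uniform over small $t$. Hence $v^TR(t)v\ge(\sigma-tC)\|v_1\|^2\ge 0$ for $0<t\le\sigma/C$, so $A+tB\succeq 0$ on an interval of positive length and $E\cap\S^n_+$ is not a singleton.

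The step I expect to be the main obstacle is the faithful translation, in part (\ref{lemma:enum1}), of the facial reduction procedure of Section \ref{section:ap} into the conditions $W_{22}\succeq 0$, $\langle W_{22},B_{22}\rangle=0$ on a certificate, together with the verification that one facial reduction step reaches $F_{\min}$ exactly when $W_{22}$ can be chosen positive definite. The remaining ingredients — the self-duality/separation description of when $B_{22}$ is semidefinite, and the two Schur complement computations — are routine linear algebra.
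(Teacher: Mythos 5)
Your proof is correct and follows essentially the same route as the paper: part (\ref{lemma:enum2}) is the identical Schur-complement argument (decomposing a test vector along the range and kernel of $B_{22}$ and using the rank hypothesis to kill the $B_{21}^T$-component on the kernel), and part (\ref{lemma:enum1}) rests on the same identification of first-step facial reduction certificates with nonzero $W=\diag(O,W_{22})\in\S^n_+$ orthogonal to $A$ and $B$, one step sufficing exactly when $W_{22}$ can be taken positive definite. The only real difference is that in part (\ref{lemma:enum1}) you invoke self-duality/separation to conclude that a $W_{22}\succ 0$ with $\langle W_{22},B_{22}\rangle=0$ exists unless $B_{22}$ is nonzero semidefinite, whereas the paper proves the contrapositive by constructing such a $W_{22}$ explicitly (diagonalize $B_{22}=Q\Lambda Q^T$ and take $W_{22}=Q\Sigma Q^T$ with positive weights $\sigma_k$ satisfying $\sum_k\sigma_k\lambda_k=0$); likewise your Schur-complement derivation of $\det B_{22}=0$ is a spelled-out version of the paper's one-line remark.
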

\begin{proof}
(\ref{lemma:enum1}) Obviously $B_{22}$ is nonzero. Suppose that $B_{22}$ is indefinite.
Then there exists an orthogonal matrix $Q$ such that $Q^T B_{22} Q = \Lambda$, 
where $\Lambda = \diag(\lambda_1,\ldots, \lambda_r,0,\ldots,0)$ and two of $\lambda_k$ have distinct signs. 
Then there exist positive numbers $\sigma_1,\ldots,\sigma_m$ such that $\sum_{k=1}^r  \sigma_k \lambda_k = 0$.
Define 
{\small $U = 
\left(
\begin{array}{c|c} 
 O & O \\
\hline
 O & Q\Sigma Q^T
\end{array}\right)$}, 
where the partition is the same as that of $B$ and $\Sigma = \diag(\sigma_1,\ldots,\sigma_m)$. 
Then we have $U\in \S^n_+$ and $\langle U, B\rangle = 0$. Let $N = \dim \S^n_+$. Since $E$ is a line, there exist linearly independent $C_1,\ldots, C_{N-1}$ such that $E = \{X \in \S^n:\langle C_1,X\rangle = 1, \langle C_k,X\rangle = 0, k = 2, \ldots, N-1\}$.  
Then $U \in \{B\}^\perp = \Span\{C_1,C_2, \ldots, C_{N-1}\}$ and hence
$U = \sum_{k=1}^{N-1}y_kC_k$ for some $y_k\in \R$.
In addition, we have
$y_1 = \sum_{k=1}^{N-1}y_k\langle C_k, A\rangle = 
\langle U, A\rangle = 0$ since $A\in E$. 
%%%%%%%%%%%%%%%%%%%%%%%%%%%%%%%%%%%%%%%%%%%%%%%%%%%%%%%%%%%%
%Here, $y_1 = \langle U, A + tB\rangle = \langle U, A \rangle = 0$.
%Thus $U = \sum_{k=2}^n y_kC_k$. 
This means that the singularity degree is $1$. Therefore, we have shown that if the singularity degree is $2$ then $B_{22}$ is a nonzero positive or negative semidefinite matrix. In addition, $\det B_{22} = 0$ since $\intr(E\cap \S^n_+) = \emptyset$.

(\ref{lemma:enum2}) 
We assume that $B_{22}$ is a nonzero positive semidefinite matrix
since the other case is shown similarly.
Let $A$ be partitioned as $B$. 
For $t$ sufficiently close to $0$, we see that $A_{11}+ tB_{11}$ is positive definite. By considering the Schur complement of $A_{11} + tB_{11}$ in $A + tB$, we have that $A + tB$ is positive semidefinite if and only if $S(t):=tB_{22} - t^2 B_{21}(A_{11} + t B_{11})^{-1} B_{21}^T$ is positive semidefinite. 
Since 
$\rank
\begin{pmatrix}
 B_{21} & B_{22}
\end{pmatrix} = \rank B_{22}$, the column space of $B_{21}$ is contained in that of $B_{22}$ and hence contained in $\Span\{v_1, \ldots, v_r\}$, where $v_k\ (k\in [r])$ are eigenvectors of $B_{22}$ that are associated with positive eigenvalues. 
Let $v \in \Span\{v_1,\ldots,v_r\}$ and $w \in \ker B_{22}$. 
Since $\Span\{v_1,\ldots,v_r\}\perp \ker B_{22}$, we have
\[
%& (v + w)^T\left(tB_{22} - t^2B_{21}(A_{11} + tB_{11})^{-1}B_{21}^T\right)(v+w) \\
%& = v^T\left(tB_{22} - t^2B_{21}(A_{11} + tB_{11})^{-1}B_{21}^T\right)v
(v + w)^TS(t)(v + w)  
 = t\left(v^T B_{22}v\right) - t^2 \left(v^T B_{21}(A_{11} + tB_{11})^{-1}B_{21}^T v\right).
\]
Since $v^T B_{22}v\geq \lambda_{\min}\|v\|^2$ where $\lambda_{\min}$ is a minimum positive eigenvalue of $B_{22}$, we obtain that $S(t)$ is positive semidefinite for sufficiently small $t>0$.
Therefore, $A + tB \in \S^n_+$ and hence $E\cap S^n_+$ is not a singleton.
%
%where $a_k >0$ is the $(k,k)$-entry of $(A_{11} + tB_{11})^{-1}$. For $k = r+1, \ldots, m$, 
%Since $A + tB_{11}$ is positive definite, there exist an orthogonal matrix $Q$ such that $B_{21}(A_{11} + tB_{11})^{-1} B_{21}^T = B_{21}Q\Lambda (B_{21}Q)^T$. Let $v_k'$ are the column vectors of $B_{21}Q$, which are linear combinations of $v_k$. Then $ = \sum_{k=1}^m a_k v_k v_k^T$.
\end{proof}
\begin{prop}
\label{prop:sd2}
Let $U_{k+1} = P_E\circ P_{\S^n_+}(U_k)$.
If $\S^n_+ \cap E = \{A\}$ and the singularity degree is greater than $1$, then there exists the initial point $U_0$ such that
$\|U_k - A\| = \Theta(k^{-\frac{1}{2}})$.
%the upper bound $O(k^{-\frac{1}{2}})$ for the convergence rate of $\{U_k\}$ is attained.
\end{prop}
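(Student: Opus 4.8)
The plan is to run the iteration $t_{k+1}=T(t_k)$ of Proposition~\ref{prop:formula_eigen} from an initial point $U_0=A+t_0B$ with $t_0>0$ small, and to show that on the half-line $t>0$ the map has an expansion $T(t)=t-at^3+O(t^4)$ with $a>0$, so that Lemma~\ref{lemma:cesaro} applies with the odd exponent $d=3$ and yields $t_k=\Theta(k^{-1/2})$.

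First I would reduce, exactly as in the reduction preceding Theorem~\ref{thm:convergence}, to $A=\diag(p_1,\dots,p_{n-m},0,\dots,0)$ with $0<m<n$ and $p_k>0$, and $B$ partitioned as in \eqref{eq:partition0} (a singularity degree exceeding $1$ forces $A$ to be neither $O$ nor invertible, so the reduction is available). By (\ref{lemma:enum1}) of Lemma~\ref{lemma:sd2}, $B_{22}$ is a nonzero semidefinite matrix with $\det B_{22}=0$; replacing $B$ by $-B$, which changes neither the sequence $\{U_k\}$ nor $\|B\|$ and only reflects the parameter $t$, I may assume $B_{22}\succeq 0$, so that $1\le r:=\rank B_{22}\le m-1$. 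Moreover $\rank\begin{pmatrix}B_{21} & B_{22}\end{pmatrix}>r$, since otherwise (\ref{lemma:enum2}) of Lemma~\ref{lemma:sd2} would contradict $\S^n_+\cap E=\{A\}$.

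The heart of the argument is the behaviour of the negative eigenvalues of $\phi(t)=A+tB$ for small $t>0$. As in the proofs of Theorems~\ref{thm:degree} and \ref{thm:convergence} (using (\ref{enum:1}) of Theorem~\ref{thm:generic}), since $\rank\begin{pmatrix}B_{21} & B_{22}\end{pmatrix}>r$ the eigenvalues split into three groups: the $n-m$ eigenvalues near $p_1,\dots,p_{n-m}$, which are positive; the $r$ eigenvalues vanishing at $t=0$ of leading degree $1$, whose leading coefficients are the nonzero eigenvalues $\sigma_1,\dots,\sigma_r$ of $B_{22}$ and hence are all positive because $B_{22}\succeq 0$; and the remaining $m-r$ vanishing eigenvalues, which by Theorem~\ref{thm:degree} have leading degree exactly $2$. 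Therefore $n(t)=\{i:\lambda_i(t)<0\}$ is a fixed set $N^+$ for all small $t>0$; every $i\in N^+$ satisfies $\lambda_i(t)=c_it^2+O(t^3)$ with $c_i<0$; and $N^+\ne\emptyset$ because $\phi(t)\notin\S^n_+$ for $t\ne 0$. Consequently $\sum_{i\in N^+}\frac{1}{2}\lambda_i^2(t)=\frac{1}{2}\bigl(\sum_{i\in N^+}c_i^2\bigr)t^4+O(t^5)$, and differentiating term by term (each $\lambda_i$ is real-analytic) Proposition~\ref{prop:formula_eigen} gives, for small $t>0$,
\[
 T(t)=t-\frac{1}{\|B\|^2}\frac{d}{dt}\sum_{i\in N^+}\frac{1}{2}\lambda_i^2(t)=t-at^3+O(t^4),\qquad a:=\frac{2\sum_{i\in N^+}c_i^2}{\|B\|^2}>0.
\]

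Finally I would invoke Lemma~\ref{lemma:cesaro} with $d=3$: it produces $\delta>0$ such that, taking $t_0\in(0,\delta)$ and $U_0:=A+t_0B$, the sequence $t_{k+1}=T(t_k)$ of Proposition~\ref{prop:formula_eigen} stays positive, decreases to $0$, and satisfies $t_k=\Theta(k^{-1/2})$; since $\|U_k-A\|=|t_k|\,\|B\|$ this gives $\|U_k-A\|=\Theta(k^{-1/2})$. I expect the middle step to be the main obstacle: one must check that on the side $t>0$ no vanishing eigenvalue is negative of order $t$ — which is exactly where $B_{22}\succeq 0$, the identification of the first-order eigenvalue data with the spectrum of $B_{22}$, and the degree bound of Theorem~\ref{thm:degree} are all needed — together with verifying that $n(t)$ is genuinely constant there and that $N^+$ is nonempty. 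The rest is the arithmetic of the expansion and an appeal to Lemma~\ref{lemma:cesaro}.
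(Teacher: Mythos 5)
Your proposal is correct and follows essentially the same route as the paper: reduce via Lemma~\ref{lemma:sd2} to $B_{22}\succeq 0$ with $\rank\begin{pmatrix}B_{21}&B_{22}\end{pmatrix}>\rank B_{22}$, use the Newton diagram (Theorems~\ref{thm:generic}, \ref{thm:degenerate}, \ref{thm:degree}) to show every negative eigenvalue branch has leading degree exactly $2$, then feed the resulting expansion $t_{k+1}=t_k-at_k^3+O(t_k^4)$ into Lemma~\ref{lemma:cesaro}. The only cosmetic difference is that you certify positivity of the degree-$1$ leading coefficients by identifying them with the nonzero eigenvalues of $B_{22}$ (as in the proof of Theorem~\ref{thm:convergence}), whereas the paper argues via the sign pattern of the edge polynomial $f(x)$; these are equivalent.
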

\begin{proof} % check
As explained in the first paragraph of Section $\ref{section:proof-conv}$, 
we may assume that
$A = \diag(a_1,\ldots,a_{n-m},0,\ldots,0),\ a_k >0\ (k\in [n-m])$ and
$B$ is partitioned as $(\ref{eq:partition0})$.
Furthermore, by Lemma $\ref{lemma:sd2}$, we may assume that $B_{22}$ is a nonzero positive semidefinite matrix, and we have $\rank
\begin{pmatrix}
 B_{21} & B_{22}
\end{pmatrix} > \rank B_{22}$. 
%$\rank
%\begin{pmatrix}
% B_{21} & B_{22}
%\end{pmatrix} > \rank B_{22}$. 
Let $r = \rank B_{22}$ and 
$d_{(i,j)}$ be the coefficient of $t^ix^j$ in
the characteristic polynomial $p_{A+tB}(x)$ of $A+t B$.
By (\ref{enum:1}) of Theorem $\ref{thm:generic}$ and the positive semidefiniteness of $B_{22}$, 
we obtain that
$\langle \adj_{|\gamma|}(A),C_{|\gamma|}^{\gamma_1}(B)\rangle>0$
for 
$\gamma \in \Gamma :=\{
\left(\begin{smallmatrix}
 0\\
m 
\end{smallmatrix}\right)
+ \eta
\left(\begin{smallmatrix}
   1\\
   -1
  \end{smallmatrix}\right),\ \eta = 0,\ldots, r\} 
$. 
Thus $d_{(0,m)}, d_{(1,m-1)}, \ldots, d_{(r,m-r)}$
are nonzero and have alternating signs. 
Let $f(x) = \sum_{\gamma\in \Gamma}d_\gamma x^{\gamma_2}$.
Then the equation $f(x)=0$ has no negative solution.
In fact, $f(-y)$ has no sign change in the coefficients, and hence $f(-y) = 0$ has no positive solution. 
For $\gamma\in \Gamma$, we see that 
the coefficient of $x^{\gamma_2}$ in $p_{A + tB}(x)$ is a polynomial in $t$
and its leading coefficient is $d_\gamma$.
Now, Theorem $\ref{thm:degenerate}$ ensures that 
$d_{(r+2,m-r-1)}$ is nonzero,
since 
$\rank
\begin{pmatrix}
 B_{21} & B_{22}
\end{pmatrix} > r$.
Together with (\ref{enum:3}) of Theorem $\ref{thm:generic}$,
we see that 
$\Gamma$ coincides with the set of all the integer points on the edge of slope $1$ of the Newton diagram associated with $p_{A+tB}$.
Thus, by the method of the Newton diagram explained in the first paragraph of Section $\ref{section:newton_diagram}$,
the leading coefficients of the eigenvalues of $A + tB$ whose
leading terms have degree $1$ are the nonzero solutions to the equation $f(x) = 0$.
Thus the leading term of an eigenvalue of $A + tB$ with a negative leading coefficient has degree greater than $1$, and hence Theorem $\ref{thm:degree}$ gives that the degree is $2$.
On the other hand, since $\S^n_+\cap E = \{A\}$, we see that $A + tB$ has at least one negative eigenvalue for $t\neq 0$.
For $U_0 = A + t_0 B$ for sufficiently small $t_0>0$, 
let $U_{k+1} = P_E\circ P_{\S^n_+}(U_k)$ and $t_k = \phi^{-1}(U_k)$.
Then Proposition $\ref{prop:formula_eigen}$ implies that 
$t_{k+1} = t_k - \frac{1}{\|B\|^2}\sum_{i \in n(t_k)}\frac{d}{d t}\frac{1}{2}\lambda_i^2(t_k)= t_k - \rho t_k^3 + O(t_k^4)$ for some $\rho > 0$.
Therefore, Lemma $\ref{lemma:cesaro}$ implies that
$\|U_k - A\| = \Theta(k^{-\frac{1}{2}})$.
%
%Then the coefficients of 
%$x^m,tx^{m-1},t^2x^{m-2}\ldots, x^{m-r}$ 
%of the characteristic polynomial $p_{A+tB}(x)$ of $A+t B$
%are nonzero and have alternating signs. 
%
\end{proof}

\begin{example} %check
\label{ex:rate}
Let $A,B$ be matrices in Example $\ref{ex:diagram}$
and $U_{k+1} = P_E\circ P_{\S^4_+}(U_k)$, where $U_k = A + t_k B$ and $E = \{A + tB:t\in \R\}$.
First, consider the case $c= 0$ and $b \neq 0$. In this case, the singularity degree of 
the intersection of $\S^4_+$ and $E$ is $2$.
For simplicity, we investigate the case of $c=0$ and $b = 1$.
As calculated in Example $\ref{ex:diagram}$, the eigenvalues of $A+tB$ are 
$\lambda_1 = \frac{3 - \sqrt{5}}{2}\,t + O(t^2),\ \lambda_2 = \frac{3 + \sqrt{5}}{2}\,t + O(t^2),\ \lambda_3 = -t^2 + O(t^3),\ \lambda_4 = 1 + O(t)$.
%By using the leading terms of the eigenvalues of $A + tB$ obtained in the example,
If we choose $t_0 < 0$ sufficiently close to $0$, 
then we find that 
\[
 t_{k+1} = t_k - \frac{1}{11}\frac{d}{dt}\frac{1}{2}(\lambda_1^2(t_k) + \lambda_2^2(t_k)) = t_k - \frac{7}{11}t_k + O(t_k^2) = \frac{4}{11} t_k + O(t_k^2).
\]
%we have $|t_{k+1}| = (1 - \frac{7}{11})|t_{k}| + O(t_{k}^2) = \frac{4}{11}t_{k} + O(t_k)$
Thus $\|U_k  - A\| \approx \|U_0\|\cdot\left(\frac{4}{11}\right)^k$; the convergence rate is linear. For $t_0 = -0.1$, our estimate gives 
$\|U_k  - A\| \approx 0.332\cdot(0.364)^k$, which is consistent with Figure $\ref{fig:Case1m}$.
On the other hand, 
if we choose $t_0 > 0$ sufficiently close to $0$, then we have
\[
 t_{k+1} = t_k - \frac{1}{11}\frac{d}{dt}\frac{1}{2}\lambda_3^2(t_k) = t_k - \frac{2}{11} t_k^3 + O(t_k^4).
\]
%Since $t_{k+1} = t_k - h(t_k)$, if the initial point of the alternating projections is $A + t_0 B$ where $t_0$ is positive and sufficiently close to $0$, 
Then Lemma $\ref{lemma:cesaro}$ implies $\|U_k - A\| = \|B\|t_k \approx \frac{11}{2}k^{-1/2} = 5.5k^{-1/2}$. The left of Figure $\ref{fig:Case1p}$ illustrates that the convergence rate is $\Theta(k^{-1/2})$. The right of Figure $\ref{fig:Case1p}$ tells that the plot of $1/\|U_k - A\|^2$ approximately coincides with the line $74.51 + 0.034k$, which means $\|U_k - A\| \approx 5.42 k^{-1/2}$.
Combining the case $t_0 < 0$ and the case $t_0 > 0$, we conclude that 
the convergence rate is $O(k^{-1/2})$. Thus there is a gap between
the actual rate and the upper bound $O(k^{-1/6})$ derived from the singularity degree.

%\begin{figure}[ht]
% \centering
%    % [inline block 0: 4 envs, 220615 chars -> data_tex | \begin{tikzpicture}[scale=0.75] %    \begin{axis}[grid=major, xlabel={$k$}, ymode=log, legend entries={$\|U_{k}-A\|$, $0...]
    
\caption{For the case $c=0$ and $b=1$, the left figure displays plots of $\sqrt{k}\|U_k-U_*\|$ and $\sqrt[6]{k}\|U_k-U_*\|$ with $t_0 = 0.1$. The right figure displays a plot of $1/\|U_k-U_*\|^2$ along with its fitting line.}
\label{fig:Case1p}
\end{figure}

Next, consider the case $c \neq 0$. Then the singularity degree of the intersection of $\S^4_+$ and $E$ is $1$. Specifically, we examine the case $c = \frac{1}{2}$ and $b = 0$.
As calculated in Example $\ref{ex:diagram}$, 
the eigenvalues of $A+tB$ are $\lambda_1 = \frac{1}{2}t + O(t^2),\ 
\lambda_2 = \frac{\sqrt{33} + 5}{4}t + O(t^2),\ 
\lambda_3 = \frac{-\sqrt{33} + 5}{4}t + O(t^2),\ 
\lambda_4 = 1 + O(t)$. Since the leading degrees of the eigenvalues are $0$ or $1$, $\|U_k - A\|$ converges to $0$ linearly for any initial point, which is consistent with Figure $\ref{fig:Case2}$.
Here again, the upper bound $O(k^{-1/2})$ derived from the singularity degree is not tight.

\begin{figure}[ht]
\centering
\begin{tikzpicture}[scale=0.75]
\begin{axis}[grid=major, xlabel={$k$}, legend entries={$\|U_{k}-A\|$, $0.320\times (0.996)^k$}, legend style={at={(axis cs:30,0.2745)}}]
\addplot[red] table [x=k, y=norm2] {
k	 norm2	 approx
   0	 3.2016e-01	 3.1995e-01
   1	 3.1881e-01	 3.1863e-01
   2	 3.1748e-01	 3.1732e-01
   3	 3.1615e-01	 3.1600e-01
   4	 3.1483e-01	 3.1470e-01
   5	 3.1351e-01	 3.1340e-01
   6	 3.1220e-01	 3.1210e-01
   7	 3.1090e-01	 3.1082e-01
   8	 3.0960e-01	 3.0953e-01
   9	 3.0831e-01	 3.0825e-01
  10	 3.0702e-01	 3.0698e-01
  11	 3.0575e-01	 3.0571e-01
  12	 3.0447e-01	 3.0445e-01
  13	 3.0321e-01	 3.0319e-01
  14	 3.0195e-01	 3.0194e-01
  15	 3.0069e-01	 3.0069e-01
  16	 2.9944e-01	 2.9945e-01
  17	 2.9820e-01	 2.9821e-01
  18	 2.9696e-01	 2.9698e-01
  19	 2.9573e-01	 2.9575e-01
  20	 2.9451e-01	 2.9453e-01
  21	 2.9329e-01	 2.9331e-01
  22	 2.9207e-01	 2.9210e-01
  23	 2.9086e-01	 2.9090e-01
  24	 2.8966e-01	 2.8969e-01
  25	 2.8846e-01	 2.8850e-01
  26	 2.8727e-01	 2.8731e-01
  27	 2.8609e-01	 2.8612e-01
  28	 2.8491e-01	 2.8494e-01
  29	 2.8373e-01	 2.8376e-01
  30	 2.8256e-01	 2.8259e-01
  31	 2.8140e-01	 2.8142e-01
  32	 2.8024e-01	 2.8026e-01
  33	 2.7909e-01	 2.7910e-01
  34	 2.7794e-01	 2.7795e-01
  35	 2.7680e-01	 2.7680e-01
  36	 2.7566e-01	 2.7565e-01
  37	 2.7453e-01	 2.7452e-01
  38	 2.7340e-01	 2.7338e-01
  39	 2.7228e-01	 2.7225e-01
  40	 2.7117e-01	 2.7113e-01
  41	 2.7005e-01	 2.7001e-01
  42	 2.6895e-01	 2.6889e-01
  43	 2.6785e-01	 2.6778e-01
  44	 2.6675e-01	 2.6668e-01
  45	 2.6566e-01	 2.6557e-01
  46	 2.6458e-01	 2.6448e-01
  47	 2.6349e-01	 2.6338e-01
  48	 2.6242e-01	 2.6230e-01
  49	 2.6135e-01	 2.6121e-01
  50	 2.6028e-01	 2.6013e-01
};
\addplot[blue, very thick, dotted] table [x=k, y=approx] {
k	 norm2	 approx
   0	 3.2016e-01	 3.1995e-01
   1	 3.1881e-01	 3.1863e-01
   2	 3.1748e-01	 3.1732e-01
   3	 3.1615e-01	 3.1600e-01
   4	 3.1483e-01	 3.1470e-01
   5	 3.1351e-01	 3.1340e-01
   6	 3.1220e-01	 3.1210e-01
   7	 3.1090e-01	 3.1082e-01
   8	 3.0960e-01	 3.0953e-01
   9	 3.0831e-01	 3.0825e-01
  10	 3.0702e-01	 3.0698e-01
  11	 3.0575e-01	 3.0571e-01
  12	 3.0447e-01	 3.0445e-01
  13	 3.0321e-01	 3.0319e-01
  14	 3.0195e-01	 3.0194e-01
  15	 3.0069e-01	 3.0069e-01
  16	 2.9944e-01	 2.9945e-01
  17	 2.9820e-01	 2.9821e-01
  18	 2.9696e-01	 2.9698e-01
  19	 2.9573e-01	 2.9575e-01
  20	 2.9451e-01	 2.9453e-01
  21	 2.9329e-01	 2.9331e-01
  22	 2.9207e-01	 2.9210e-01
  23	 2.9086e-01	 2.9090e-01
  24	 2.8966e-01	 2.8969e-01
  25	 2.8846e-01	 2.8850e-01
  26	 2.8727e-01	 2.8731e-01
  27	 2.8609e-01	 2.8612e-01
  28	 2.8491e-01	 2.8494e-01
  29	 2.8373e-01	 2.8376e-01
  30	 2.8256e-01	 2.8259e-01
  31	 2.8140e-01	 2.8142e-01
  32	 2.8024e-01	 2.8026e-01
  33	 2.7909e-01	 2.7910e-01
  34	 2.7794e-01	 2.7795e-01
  35	 2.7680e-01	 2.7680e-01
  36	 2.7566e-01	 2.7565e-01
  37	 2.7453e-01	 2.7452e-01
  38	 2.7340e-01	 2.7338e-01
  39	 2.7228e-01	 2.7225e-01
  40	 2.7117e-01	 2.7113e-01
  41	 2.7005e-01	 2.7001e-01
  42	 2.6895e-01	 2.6889e-01
  43	 2.6785e-01	 2.6778e-01
  44	 2.6675e-01	 2.6668e-01
  45	 2.6566e-01	 2.6557e-01
  46	 2.6458e-01	 2.6448e-01
  47	 2.6349e-01	 2.6338e-01
  48	 2.6242e-01	 2.6230e-01
  49	 2.6135e-01	 2.6121e-01
  50	 2.6028e-01	 2.6013e-01
};
\end{axis}
\end{tikzpicture}
\quad
\begin{tikzpicture}[scale=0.75]
\begin{axis}[grid=major, xlabel={$k$},ymode=log, legend entries={$\|U_{k}-A\|$, $0.270\times (0.214)^k$}, legend style={at={(axis cs:30,10e-27)}}]
\addplot[red] table [x=k, y=norm2] {
k	 norm2	 approx
   0	 3.2016e-01	 2.7037e-01
   1	 5.9955e-02	 5.7905e-02
   2	 1.2494e-02	 1.2402e-02
   3	 2.6603e-03	 2.6561e-03
   4	 5.6906e-04	 5.6887e-04
   5	 1.2185e-04	 1.2184e-04
   6	 2.6095e-05	 2.6094e-05
   7	 5.5887e-06	 5.5887e-06
   8	 1.1970e-06	 1.1970e-06
   9	 2.5636e-07	 2.5636e-07
  10	 5.4905e-08	 5.4905e-08
  11	 1.1759e-08	 1.1759e-08
  12	 2.5185e-09	 2.5185e-09
  13	 5.3939e-10	 5.3939e-10
  14	 1.1552e-10	 1.1552e-10
  15	 2.4742e-11	 2.4742e-11
  16	 5.2991e-12	 5.2991e-12
  17	 1.1349e-12	 1.1349e-12
  18	 2.4307e-13	 2.4307e-13
  19	 5.2060e-14	 5.2060e-14
  20	 1.1150e-14	 1.1150e-14
  21	 2.3880e-15	 2.3880e-15
  22	 5.1144e-16	 5.1144e-16
  23	 1.0954e-16	 1.0954e-16
  24	 2.3460e-17	 2.3460e-17
  25	 5.0245e-18	 5.0245e-18
  26	 1.0761e-18	 1.0761e-18
  27	 2.3048e-19	 2.3048e-19
  28	 4.9362e-20	 4.9362e-20
  29	 1.0572e-20	 1.0572e-20
  30	 2.2642e-21	 2.2642e-21
  31	 4.8494e-22	 4.8494e-22
  32	 1.0386e-22	 1.0386e-22
  33	 2.2244e-23	 2.2244e-23
  34	 4.7642e-24	 4.7642e-24
  35	 1.0204e-24	 1.0204e-24
  36	 2.1853e-25	 2.1853e-25
  37	 4.6804e-26	 4.6804e-26
  38	 1.0024e-26	 1.0024e-26
  39	 2.1469e-27	 2.1469e-27
  40	 4.5981e-28	 4.5981e-28
  41	 9.8480e-29	 9.8480e-29
  42	 2.1092e-29	 2.1092e-29
  43	 4.5173e-30	 4.5173e-30
  44	 9.6748e-31	 9.6748e-31
  45	 2.0721e-31	 2.0721e-31
  46	 4.4379e-32	 4.4379e-32
  47	 9.5048e-33	 9.5048e-33
  48	 2.0357e-33	 2.0357e-33
  49	 4.3599e-34	 4.3599e-34
  50	 9.3377e-35	 9.3377e-35
};
\addplot[blue, very thick, dotted] table [x=k, y=approx] {
k	 norm2	 approx
   0	 3.2016e-01	 2.7037e-01
   1	 5.9955e-02	 5.7905e-02
   2	 1.2494e-02	 1.2402e-02
   3	 2.6603e-03	 2.6561e-03
   4	 5.6906e-04	 5.6887e-04
   5	 1.2185e-04	 1.2184e-04
   6	 2.6095e-05	 2.6094e-05
   7	 5.5887e-06	 5.5887e-06
   8	 1.1970e-06	 1.1970e-06
   9	 2.5636e-07	 2.5636e-07
  10	 5.4905e-08	 5.4905e-08
  11	 1.1759e-08	 1.1759e-08
  12	 2.5185e-09	 2.5185e-09
  13	 5.3939e-10	 5.3939e-10
  14	 1.1552e-10	 1.1552e-10
  15	 2.4742e-11	 2.4742e-11
  16	 5.2991e-12	 5.2991e-12
  17	 1.1349e-12	 1.1349e-12
  18	 2.4307e-13	 2.4307e-13
  19	 5.2060e-14	 5.2060e-14
  20	 1.1150e-14	 1.1150e-14
  21	 2.3880e-15	 2.3880e-15
  22	 5.1144e-16	 5.1144e-16
  23	 1.0954e-16	 1.0954e-16
  24	 2.3460e-17	 2.3460e-17
  25	 5.0245e-18	 5.0245e-18
  26	 1.0761e-18	 1.0761e-18
  27	 2.3048e-19	 2.3048e-19
  28	 4.9362e-20	 4.9362e-20
  29	 1.0572e-20	 1.0572e-20
  30	 2.2642e-21	 2.2642e-21
  31	 4.8494e-22	 4.8494e-22
  32	 1.0386e-22	 1.0386e-22
  33	 2.2244e-23	 2.2244e-23
  34	 4.7642e-24	 4.7642e-24
  35	 1.0204e-24	 1.0204e-24
  36	 2.1853e-25	 2.1853e-25
  37	 4.6804e-26	 4.6804e-26
  38	 1.0024e-26	 1.0024e-26
  39	 2.1469e-27	 2.1469e-27
  40	 4.5981e-28	 4.5981e-28
  41	 9.8480e-29	 9.8480e-29
  42	 2.1092e-29	 2.1092e-29
  43	 4.5173e-30	 4.5173e-30
  44	 9.6748e-31	 9.6748e-31
  45	 2.0721e-31	 2.0721e-31
  46	 4.4379e-32	 4.4379e-32
  47	 9.5048e-33	 9.5048e-33
  48	 2.0357e-33	 2.0357e-33
  49	 4.3599e-34	 4.3599e-34
  50	 9.3377e-35	 9.3377e-35
};
\end{axis}
\end{tikzpicture}
\caption{For the case $c = \frac{1}{2}$ and $b = 0$,
the left figure displays the log-scale plot of $\|U_k-A\|$ with $t_0 = 0.1$ and its fitting curve, while
the right figure displays the log-scale plot of $\|U_k-A\|$ with $t_0 = -0.1$ and its fitting curve.}
\label{fig:Case2}
\end{figure}

\end{example}

\section{Acknowledgment}
The first author was supported by JSPS KAKENHI Grant Number JP19K03631 and JP24K06841. 
The second author was supported by JSPS KAKENHI Grant Number JP17K18726 
and JSPS Grant-in-Aid for Transformative Research Areas (A) (22H05107). 
The third author was supported by JSPS KAKENHI Grant Number JP24K14843.\\
%JP20K11696 and ERATO HASUO Metamathematics for Systems Design Project (No.JPMJER1603), JST.

\noindent\textbf{Data availability.} 
Data sets generated during the current study are available from the corresponding author on reasonable request.

\section*{Declarations}

\noindent\textbf{Conflict of interest.}
On behalf of all authors, the corresponding author states that there is no conflict of interest.
%The authors do not have financial or non-financial interests that are directly or indirectly related to this article.

\end{document}